%% file: main.tex
\documentclass{article}

\usepackage[margin=1in]{geometry}
\usepackage{fullpage}
\usepackage[american]{babel}

\usepackage{amsmath, amsthm, amssymb, amsfonts, mathtools}
\usepackage{bm}

\input{macros.tex}

\usepackage{graphicx}
\usepackage{xcolor}
\definecolor{CeruleanRef}{RGB}{12,127,172}
\graphicspath{{./Figures/}}

\usepackage[ruled,vlined,algo2e,linesnumbered]{algorithm2e}
\usepackage[section]{algorithm}
\usepackage{algpseudocode}

\usepackage[colorlinks,linkcolor=blue, citecolor=blue]{hyperref}

\usepackage[capitalise]{cleveref}

\newtheorem{theorem}{Theorem}[section]
\newtheorem{lemma}[theorem]{Lemma}

\newtheorem{assumption}{Assumption}[section]

\newtheorem{remark}{Remark}[section]
\newtheorem{definition}{Definition}[section]

\crefname{equation}{expression}{expressions}
\crefname{algocfline}{algorithm}{algorithms}
\crefname{assumption}{assumption}{assumptions}
\crefname{condition}{condition}{conditions}
\crefname{proposition}{Proposition}{Propositions} %

\crefname{lemma}{Lemma}{Lemmas}
\Crefname{lemma}{Lemma}{Lemmas}
\crefname{corollary}{Corollary}{Corollaries}
\Crefname{corollary}{Corollary}{Corollaries}

\usepackage{multirow}
\usepackage{booktabs}
\usepackage{enumitem}
\usepackage{subcaption}
\usepackage[T1]{fontenc}
\usepackage{makecell}
\usepackage{comment} %
\usepackage{mathrsfs}
\setcellgapes{3pt}  %
\makegapedcells
\numberwithin{equation}{section}

\newcommand{\papertitle}{Stochastic Augmented Lagrangian Framework with Second-Order Convergence Guarantees for Nonconvex Expectation-Constrained Optimization}
\newcommand{\authorA}{Raghu Bollapragada}
\newcommand{\authorB}{Yash Kumar}
\newcommand{\affiliation}{Operations Research and Industrial Engineering Graduate Program, The University of Texas at Austin}
\newcommand{\emails}{\{raghu.bollapragada, yashkumar1803\}@utexas.edu}

\begin{document}

\title{\textbf{\papertitle}}

\author{
    \authorA\thanks{\affiliation. Emails: \texttt{\emails}} 
    \and \authorB\footnotemark[1] 
}

\date{\today}
\maketitle

\input{0-abstract}

\input{1-intro-lit}

\input{2-outer-algorithm}

\input{3-inner-algorithm}
\input{4-existing-solvers}

\input{4-SGD-NCS-RVR}

\input{4-Natasha2}
\input{4-SPIDER}
\input{5-numerical-experiments}

\input{6-conclusion}

\bibliographystyle{plain}
\bibliography{references}

\appendix
\input{7-appendix}

\end{document}

%% file: macros.tex
\providecommand{\cF}{{\mathcal{F}}}
\providecommand{\cG}{{\mathcal{G}}}

\providecommand{\cL}{{\mathcal{L}}}
\providecommand{\cM}{{\mathcal{M}}}

\providecommand{\cO}{{\mathcal{O}}}
\providecommand{\cP}{{\mathcal{P}}}

\providecommand{\cS}{{\mathcal{S}}}
\providecommand{\cT}{{\mathcal{T}}}

\providecommand{\cW}{{\mathcal{W}}}
\providecommand{\cX}{{\mathcal{X}}}

\providecommand{\EE}{\mathbb{E}} %

\providecommand{\NN}{\mathbb{N}} %
\providecommand{\PP}{{\mathbb{P}}}
\providecommand{\RR}{\mathbb{R}} %

\providecommand{\lt}{\left}
\providecommand{\rt}{\right}

\providecommand{\lambdaj}{\lambda^{(j)}} %
\providecommand{\zetaone}{\zeta^{(1)}}
\providecommand{\zetatwo}{\zeta^{(2)}}
\providecommand{\epsilong}{\epsilon^{(g)}}
\providecommand{\epsilonH}{\epsilon^{(H)}}
\providecommand{\ssum}{\textstyle\sum}
\providecommand{\pprod}{\textstyle\prod}

\providecommand{\simiid}{\stackrel{\mathrm{i.i.d.}}{\sim}}

\newcommand{\ignore}[1]{}

%% file: 0-abstract.tex
\begin{abstract}

In this paper, we propose and analyze an augmented Lagrangian framework for solving stochastic nonconvex optimization problems with expectation-based equality constraints over a closed and convex constraint set. The framework generates a sequence of nonconvex primal subproblems, which are solved inexactly using stochastic second-order methods. We establish iteration complexity results for obtaining approximate second-order stationary points, both in expectation and with prescribed probability, under corresponding conditions on the accuracy of the primal subproblem solutions. We further establish sample complexity results for the framework with general stochastic second-order subproblem solvers under reasonable assumptions on their theoretical guarantees.  Moreover, we incorporate three existing stochastic second-order solvers and derive the corresponding deterministic and probabilistic sample complexity results for obtaining approximate second-order stationary points.  To the best of our knowledge, such sample complexity guarantees for obtaining approximate second-order stationary points in nonconvex expectation-constrained optimization have not been established previously. Finally, we demonstrate the empirical performance of the proposed framework on two nonconvex machine learning problems.

\end{abstract}

%% file: 1-intro-lit.tex
\section{Introduction}

In this paper, we consider %
expectation-constrained stochastic optimization problems of the form
\begin{equation}\label{eq:gen_formula}
\begin{aligned}
    \min_{x \in \cX} \quad  f(x) & := \EE_\xi[F(x, \xi)]  \\
    \text{s.t.} \quad c(x) & := \EE_\zeta[C(x, \zeta)] = 0,
\end{aligned}
\end{equation}
where $f: \RR^d \rightarrow \RR$ and $c: \RR^d \rightarrow \RR^m$ are smooth, twice continuously differentiable and possibly nonconvex; %
$\xi$ and $\zeta$ are random variables with associated probability spaces $(\cS_\xi, \cG_\xi, \cP_\xi)$ and $(\cS_\zeta, \cG_\zeta, \cP_\zeta)$ respectively;
The stochastic functions 
$F: \RR^d \times \cS_\xi \rightarrow \RR$ and $C: \RR^d \times \cS_\zeta \rightarrow \RR^m$ are the stochastic objective and constraint functions; and $\EE_\xi[\cdot]$ and $\EE_\zeta[\cdot]$ denote expectations with respect to $\cP_\xi$ and $\cP_\zeta$, respectively. We assume that the constraint set $\cX \subseteq \RR^d$ is closed and convex %
and that only stochastic estimates of the objective function, constraint function, and their derivatives are available. %
Although \eqref{eq:gen_formula} contains only equality constraints, the formulation can accommodate inequality constraints. %
In particular, inequality constraints can be reformulated as equality constraints by introducing slack variables and incorporating their nonnegativity constraints into the set $\cX$. Problems of the form \eqref{eq:gen_formula} arise
in a range of applications, including fairness-constrained machine learning \cite{donini_empirical_2018,oneto_fairness_2020}, physics-informed neural networks \cite{basir_physics_2022,krishnapriyan_characterizing_2021}, robotics \cite{lavalle_planning_2006,lengagne_generation_2013}, and energy systems \cite{dincer_optimization_2017,wood_power_1996}.

Deterministic methods that use exact evaluations of functions and their derivatives to solve \eqref{eq:gen_formula} have been extensively studied;
see, e.g., \cite{nocedal2006numerical}. Nevertheless, nonconvex constrained optimization remains challenging because the problem landscape may contain multiple local minimizers/maximizers, and saddle points. Unlike in convex optimization, an approximate first-order stationary point of a nonconvex problem need not correspond to an approximate local minimizer. Therefore, algorithms require second-order (curvature) information to {distinguish between different stationary points.} 

These challenges are further exacerbated in the stochastic setting, where only stochastic estimates of the objective, constraint functions, and their derivatives are available, and errors in these estimators can adversely affect the optimization process. A few stochastic frameworks have been developed for expectation-constrained problems, including proximal-point methods \cite{boob_stochastic_2023,ma_quadratically_2020}, subgradient-switching methods \cite{lan_algorithms_2020,huang_oracle_2023}, sequential quadratic programming methods \cite{shen_sequential_2025,facchinei_stochastic_2025}, and augmented Lagrangian or penalty-based methods \cite{alacaoglu_complexity_2024,li_stochastic_2024,liu_spidertype_2026,xiao_developing_2026,zhang_solving_2022}. However, in nonconvex settings, these methods primarily establish convergence to only approximate first-order stationary points.

We propose and analyze stochastic algorithms for obtaining approximate second-order stationary points of \eqref{eq:gen_formula}. We develop an inexact stochastic augmented Lagrangian framework that solves a sequence of nonconvex subproblems over the convex set $\cX$, with penalty parameters and Lagrange multipliers updated between subproblem solves. Each subproblem is solved inexactly to satisfy either expectation-based or probability-based second-order inexactness conditions. We establish theoretical guarantees and evaluate the empirical performance of the framework using three stochastic second-order subproblem solvers. The proposed framework requires only stochastic estimates of the objective and constraint functions, their gradients, and Hessian vector products, thereby enabling the use of curvature information without explicitly forming or storing Hessian matrices.

\subsection{Notation and Preliminary Definitions}

We first introduce notation and preliminary definitions used throughout the paper. We denote the set of positive integers by $\NN := \{1, 2, 3, \dots, \}$, the set of non-negative integers by $\NN_0 := \{0, 1, 2, \dots, \}$, the set of real numbers by $\RR$, and the set of non-negative real numbers by $\RR_+$. The sets of $d$-dimensional real vectors and $d\times m$ real matrices are denoted by $\RR^d$ and $\RR^{d\times m}$, respectively. For a vector $a$, $a^{(j)}$ denotes its $j$th component.
For each realization of $\zeta$,  $C(\cdot,\zeta):\RR^d\to\RR^m$, with $C^{(j)}(\cdot,\zeta)$ denoting its $j$th component, we use {$\nabla C^{(j)}(x,\zeta)\in\RR^d$ to denote the gradient of
$C^{(j)}(\cdot,\zeta)$ at $x$, and} $\nabla^2 C^{(j)}(x,\zeta)\in\RR^{d\times d}$ to denote the Hessian of
$C^{(j)}(\cdot,\zeta)$ at $x$. Likewise, $c^{(j)}$ denotes the $j$th component of $c$, {$\nabla c^{(j)}(x) \in \RR^d$ denotes the gradient of $c^{(j)}(\cdot)$ at $x$,} and $\nabla^2 c^{(j)}(x)\in\RR^{d\times d}$ denotes its Hessian, for
each $j\in\{1,\ldots,m\}$. For a matrix $A$, $\mathrm{Null}(A):=\{x\mid Ax=0\}$ denotes its null space. For a real symmetric matrix $A$, $\sigma_{\min}(A)$ denotes its smallest eigenvalue. Unless indicated otherwise in the subscripts, $\|\cdot\|$ denotes the $\ell_2$  vector norm for vectors and the vector-induced $\ell_2$ norm for matrices. We use $\mathrm{proj}_\cX (x)$ and $\mathrm{dist}(x, \cX)$ to denote the projection
of $x$ onto $\cX$ and the distance from $x$ to $\cX$, respectively. That is, 
\begin{align}
  \mathrm{proj}_\cX (x) := \arg \min_{x' \in \cX} \|x - x'\|, \quad \mathrm{dist}(x, \cX) := \min_{x' \in \cX} \|x - x'\|.    
\end{align}
Furthermore, $T_{\cX}(x)$ and $N_{\cX}(x)$ denote the tangent and normal cones to $\cX$ at $x$, respectively.  The critical tangent cone is defined by 
\begin{align}
    T_\cX^{(c)}(x) := T_{\cX}(x) \cap {\mathrm{Null}(\nabla c(x))}.
\end{align}

For an event $E$, $\PP(E)$ denotes its probability and $\mathbf{1}_A$ denotes its indicator variable. In particular, $\mathbf{1}_A=1$ when $E$ occurs and $\mathbf{1}_A=0$ otherwise. The notation $Var_\zeta(C(x, \zeta))$ represents 
{the trace of the covariance matrix of stochastic vector function $C(x, \zeta)$ with respect to random variable $\zeta$. }
Finally, we use the standard asymptotic notation $\cO(\cdot)$, $\Omega(\cdot)$, and $\Theta(\cdot)$. In particular, $h(n)\in\cO(g(n))$ means that $h(n)$ is bounded above by a positive constant multiple of $g(n)$ for all sufficiently large $n$, while $h(n)\in\Omega(g(n))$ gives the corresponding lower bound. The notation $h(n)\in\Theta(g(n))$ means that both bounds hold, implying that both functions share same asymptotic growth rates. Additionaly, we write  $h(n) \in \tilde{\cO}(g(n))$ if $h(n) \in \cO(g(n)\log^r g(n))$ for some constant $r \ge 0$. That is, $\tilde{\cO}$ suppresses polylogarithmic factors.

With this notation, we say that a point $x \in \cX$ is a second-order stationary point of \eqref{eq:gen_formula} if there exists $\lambda\in\RR^m$ such that
\begin{align*}
    & \| \mathrm{proj}_{T_{\cX}(x)} (-\nabla f(x) - \nabla c(x)^\top \lambda) \| = 0, \qquad \| c(x) \| = 0, \text{ and } \\
    & \min_{u \in T_\cX^{(c)}(x), \|u\|=1} u^\top \big( \nabla^2 f(x) + \ssum_{j=1}^{m} \lambda^{(j)} \nabla^2 c^{(j)}(x) \big) u \geq 0.
\end{align*}
More generally, for $\epsilong>0$ and $\epsilonH>0$, we say that $x\in\cX$ is an $(\epsilong,\epsilonH)$-accurate second-order stationary point if there exists $\lambda\in\RR^m$ such that
\begin{equation} \label{eq:eps-kkt}
\setlength{\arraycolsep}{2pt}
\begin{array}{cll}
    \mathrm{(i)} & \quad \| \mathrm{proj}_{T_{\cX}(x)} (-\nabla f(x) - \nabla c(x)^\top \lambda) \| \leq \epsilong, &\quad \text{($1^{\mathrm{st}}$-order stationarity condition)} \\
    \mathrm{(ii)} & \quad \| c(x) \| \leq \epsilong, &\quad \text{(feasibility condition)} \\
    \mathrm{(iii)} & \quad \min\limits_{u \in T_\cX^{(c)}(x), \|u\|=1} u^\top \big( \nabla^2 f(x) + \ssum_{j=1}^{m} \lambda^{(j)} \nabla^2 c^{(j)}(x) \big) u \geq -\epsilonH, &\quad \text{($2^{\mathrm{nd}}$-order stationarity condition)} 
\end{array}
\end{equation}
for some $\lambda \in \RR^m$, $\epsilong > 0$, and $\epsilonH > 0$; see e.g., \cite{mokhtari_escaping_2018, royer_newtoncg_2020, sahin_inexact_2019}.

\begin{remark}
    The first-order stationarity condition in \eqref{eq:eps-kkt}$\mathrm{(i)}$ is %
    equivalent to to the distance-based condition%
    \begin{align*}
        \mathrm{dist}(\nabla f(x) + \nabla c(x)^\top \lambda, -N_\cX(x)) \leq \epsilong,
    \end{align*}
    which is the form commonly used in literature; see, e.g., \cite{alacaoglu_complexity_2024,li_stochastic_2024,sahin_inexact_2019}.
    However, we use the projection-based form in \eqref{eq:eps-kkt}$\mathrm{(i)}$ because it simplified our analysis.
    {To our knowledge, %
    {theoretical analyses based on first-order stationarity conditions in this projection form are limited.} This condition makes explicit use of projections onto the tangent cone $T_{\mathcal{X}}(x)$. For a detailed discussion of the properties of the projection operator $\mathrm{proj}_{T_{\mathcal{X}}}(\cdot)$, see Remark~\ref{rmk:proj-tan-cone}.}

\end{remark}

In the stochastic settings, we may not be able to obtain approximate solutions that satisfy the deterministic stationarity conditions in \eqref{eq:eps-kkt}. Instead, we consider random iterates that satisfy these conditions either in expectation or with a prescribed probability. To this end, we define two types of approximate solutions as follows:
\begin{definition} \label{def:opt_cond_exp}
    A (possibly random) iterate $\tilde{x} \in \cX$ is an $(\epsilong, \epsilonH)$-accurate expected second-order stationary point of \eqref{eq:gen_formula}, if there exists $\tilde{\lambda} \in \RR^m$ such that, 
    \begin{equation} \label{eq:opt_cond_exp}
    \begin{aligned}
        \EE[\| \mathrm{proj}_{T_\cX(\tilde{x})} ( - \nabla f(\tilde{x}) - \nabla c(\tilde{x})^\top \tilde{\lambda} ) \|] &\leq \epsilong ,\\
        \EE[\|c(\tilde{x}) \|] &\leq \epsilong , \\
        \EE \lt[ \min\nolimits_{u \in T_\cX^{(c)}(\tilde{x}), \|u\|=1} u^\top (\nabla^2 f(\tilde{x}) + \ssum_{j=1}^m \tilde{\lambda}^{(j)} \nabla^2 c^{(j)}(\tilde{x})) u \rt]  &\geq - \epsilonH. 
    \end{aligned}
    \end{equation}
\end{definition}
\begin{definition} \label{def:opt_cond_prob}
    A (possibly random) iterate $\tilde{x} \in \cX$ is an $(\epsilong, \epsilonH)$-accurate $p$-probabilistic second-order stationary point of \eqref{eq:gen_formula}, if there exists $\tilde{\lambda} \in \RR^m$ such that,
    \begin{align}
        \PP
        \begin{pmatrix}
        \{ \| \mathrm{proj}_{T_\cX(\tilde{x})} ( - \nabla f(\tilde{x}) - \nabla c(\tilde{x})^\top \tilde{\lambda} ) \| \leq \epsilong \}~\cap \\
        \{ \| c(\tilde{x}) \| \leq \epsilon^{(g)} \}~\cap \\
        \big\{ \min\nolimits_{u \in T_\cX^{(c)}(\tilde{x}), \|u\|=1} u^\top (\nabla^2 f(\tilde{x}) + \ssum_{j=1}^m \tilde{\lambda}^{(j)} \nabla^2 c^{(j)}(\tilde{x})) u \geq - \epsilonH \big\}
        \end{pmatrix} \geq p. \label{eq:opt_cond_prob}
    \end{align}
\end{definition}
We utilize algorithms based on stochastic {function,} gradient and Hessian vector product evaluations to obtain these approximate solutions. To quantify the computational effort required by these algorithms, we define the sample complexity of an algorithm as follows:
\begin{definition}\label{def:sample_complexity}
    The sample complexity of an algorithm, denoted by $\cW$, is the total number of stochastic objective gradient evaluations $\nabla F(x,\xi)$, stochastic objective Hessian vector product evaluations $\nabla^2 F(x,\xi)v$, stochastic constraint function evaluations $C^{(j)}(x,\zeta)$, stochastic constraint gradient evaluations $\nabla C^{(j)}(x,\zeta)$, and stochastic constraint Hessian vector product evaluations $\nabla^2 C^{(j)}(x,\zeta)v$, for $j\in{1,\ldots,m}$ and an arbitrary vector $v$, required to obtain either an $(\epsilong,\epsilonH)$-accurate second-order stationary point of \eqref{eq:gen_formula}.
\end{definition}
\subsection{Contributions}

The primary contributions of this paper are three-fold. 

\begin{enumerate}

    \item We propose and analyze a novel algorithmic framework, named MESCAL (Method for Expectation-constraints with Second-order Convergence via Augmented Lagrangian), based on the inexact augmented Lagrangian framework for solving nonconvex expectation-constrained optimization problems of the form \eqref{eq:gen_formula}. The framework involves solving a sequence of nonconvex subproblems over the convex constraint set $\cX$. We develop inexactness conditions in both expectation and probability for solving these subproblems inexactly, which can be satisfied by general second-order stochastic solvers. We establish iteration complexity bounds %
    for MESCAL to obtain an $(\epsilong,\epsilonH)$-accurate expected or $p$-probabilistic second-order stationary point within $\cO\lt(\log\lt((\epsilong)^{-1}\rt)\rt)$ iterations (see \Cref{thm:outer-iteration-complexity}).

    \item We establish sample complexity results for MESCAL when general second-order solvers are used to solve the subproblems inexactly under reasonable assumptions on their theoretical properties, motivated by the lower bounds established in \cite{arjevani_secondorder_2020}, both in expectation and in probability (see \Cref{thm:theoretical-solver}). Furthermore, when $\cX=\RR^d$, we use three second-order solvers, SG-HV-NC \cite[Algorithm 4]{arjevani_secondorder_2020}, Natasha2 \cite{allen-zhu_natasha_2018}, and SPIDER \cite{fang_spider_2018}, within MESCAL to establish deterministic and probabilistic sample complexity bounds for obtaining an $(\epsilong,\epsilonH)$-accurate second-order stationary point in probability. \Cref{table:result_comparison} {contextualizes our results against prior work.}
    {To the best of our knowledge, such second-order stationarity guarantees for expectation-constrained problems of the form \eqref{eq:gen_formula} have not previously appeared in the literature.}

    \item We demonstrate the empirical performance of the proposed algorithmic framework on two relevant machine learning tasks: (i) a fairness-constrained optimization problem and (ii) a nonconvex Neyman--Pearson classification problem. We compare the performance of the three second-order solvers when used within the proposed MESCAL framework on these problems.

\end{enumerate}

\begin{table}[H]
    \centering
    \def\arraystretch{0.8}
    \caption{Summary of sample complexity $\cW$ results in the relevant literature under different problem settings. In all the works mentioned here, the objective considered is nonconvex, stochastic and the constraint set $\cX = \RR^d$. There are two types of constraints referred to here: 1) $\EE$ refers to expectation constraints, 2) None means no constraints. There are two types of iterates referred to here: 1) $\EE$ refers to random iterates satisfying stationarity conditions in \eqref{eq:opt_cond_exp}, 2) $\PP$ refers to random iterates satisfying stationarity conditions in \eqref{eq:opt_cond_prob}. Here, first-order sample complexity refers to work bounds in achieving first-order stationarity conditions and feasibility conditions, {and} second-order sample complexity refers to work bounds in achieving second-order stationarity conditions and feasibility conditions, if constraints present. {The sample complexity bounds denoted in our results are deterministic. We further prove sample complexity bounds in probability that improve upon these results by a factor of $\cO((\epsilong)^{-1})$; {see} %
    \Cref{sec:active_solvers}.} (Note: $^*$For marked paper, $\epsilong = \epsilon, \epsilonH = \sqrt{L_2 \epsilon}$, where $L_2$ is Lipschitz constant corresponding to Hessian of the objective function.)}
    \label{table:result_comparison}
    \begin{tabular}{l|c|c|c|c}
        \textbf{Reference} &
        \textbf{\makecell{Constraints}} & 
        \textbf{\makecell{Iterate\\Type}} & 
        \textbf{\makecell{1$^{\boldsymbol{\mathrm{st}}}$-Order Sample\\Complexity}} &
        \textbf{\makecell{2$^{\boldsymbol{\mathrm{nd}}}$-Order Sample\\Complexity}} \\
        \hline \hline
        ConEx \cite{boob_stochastic_2023}
        & $\EE$ 
        & $\EE$
        & $\cO \lt( (\epsilong)^{-6} \rt)$
        & - \\
        Stoch-iALM \cite{li_stochastic_2024}
        & $\EE$
        & $\EE$
        & $\cO \lt( (\epsilong)^{-5} \rt)$
        & - \\
        Stoch-SQP \cite{shen_sequential_2025}
        & $\EE$ 
        & $\EE$
        & $\cO \lt( (\epsilong)^{-4} \rt)$
        & - \\
        \hline
        SCR \cite{tripuraneni_stochastic_2018}
        & None 
        & $\PP$
        & - 
        & $\cO \lt( \epsilon^{-3.5} \rt)^*$ \\
        Natasha2 \cite{allen-zhu_natasha_2018}
        & None 
        & $\PP$
        & - 
        & \makecell{$\tilde{\cO}\big( (\epsilong)^{-3} (\epsilonH)^{-1}$\\$+(\epsilonH)^{-5} \big)$} \\
        SPIDER \cite{fang_spider_2018}
        & None 
        & $\PP$
        & -
        & $\tilde{\cO}\lt( (\epsilong)^{-3} + (\epsilonH)^{-5} \rt)$ \\
        SG-HV-NC \cite{arjevani_secondorder_2020}
        & None 
        & $\PP$
        & -
        & $\tilde{\cO}\lt( (\epsilong)^{-3} + (\epsilonH)^{-5} \rt)$ \\
        \hline
        \textbf{\makecell{MESCAL-\\SG-HV-NC\\
        (\Cref{thm:mescal_sgd})\\\vspace{-0.5em}\\MESCAL-\\SPIDER\\
        (\Cref{thm:mescal_spider})}}
        & $\EE$ 
        & $\PP$
        & -
        & \makecell{
            $\boldsymbol{\tilde{\cO} \big( (\epsilong)^{-6} (\epsilonH)^{-2}}$\\+ $\boldsymbol{(\epsilong)^{-5} (\epsilonH)^{-5} \big)}$} \\

        \textbf{\makecell{MESCAL-\\Natasha2\\
        (\Cref{thm:mescal_natasha2})}}
        & $\EE$  
        & $\PP$
        & -
        & \makecell{
            $\boldsymbol{\tilde{\cO} \big( (\epsilong)^{-7} (\epsilonH)^{-1}}$\\+ $\boldsymbol{(\epsilong)^{-5} (\epsilonH)^{-5} \big)}$} \\
    \end{tabular}
\end{table}

\subsection{Literature Review}

Deterministic augmented Lagrangian {(AL)} methods date back to the independent works of Hestenes \cite{hestenes_multiplier_1969} and Powell \cite{powell_method_1969} in 1969. The framework was subsequently developed and analyzed in, among others, the works of Rockafellar \cite{rockafellar_dual_1973} in 1973 and Bertsekas \cite{bertsekas_penalty_1976} in 1975. Our framework for expectation-constrained optimization is based in part on a stochastic version of the deterministic framework established by Bertsekas in \cite{bertsekas_penalty_1976}. Since then, the framework has been employed extensively in the optimization literature, and several methods have established complexity results under deterministic settings \cite{li_inexact_2021, lin_complexity_2022, xu_iteration_2021}. The literature most closely related to our work can be broadly divided into two categories: methods for expectation-constrained optimization and second-order methods for unconstrained nonconvex optimization.

\paragraph{Expectation-Constrained Optimization.} The development of algorithms for expectation-constrained optimization remains an active area of research \cite{boob_stochastic_2023, facchinei_stochastic_2025, huang_oracle_2023, lan_algorithms_2020, lew_sample_2024, ma_quadratically_2020, menhorn_trustregion_2022, yang_datadriven_2025, yu_online_2017}, with several works using AL and penalty-based methods \cite{alacaoglu_complexity_2024, li_stochastic_2024, liu_spidertype_2026, xiao_developing_2026, zhang_solving_2022}. For convex expectation-constrained problems, existing methods include primal subgradient switching \cite{lan_algorithms_2020}, primal subgradient descent with dual ascent \cite{yan_adaptive_2022}, and linearized proximal methods of multipliers \cite{zhang_solving_2022}, with sample complexity bounds of $\cO((\epsilong)^{-2})$ under their respective assumptions.

Existing work for nonconvex expectation-constrained optimization is more extensive but focuses primarily on first-order methods. Boob et al.~\cite{boob_stochastic_2023} employ a proximal-point method to transform the nonconvex problem into a sequence of convex subproblems. Under stronger assumptions, including strong feasibility, they establish an $\cO((\epsilong)^{-6})$ sample complexity bound. Ma et al.~\cite{ma_quadratically_2020} obtain the same bound using a related proximal-point approach based on the online stochastic subgradient routine of Yu et al.~\cite{yu_online_2017}. Huang et al.~\cite{huang_oracle_2023} also establish an $\cO((\epsilong)^{-6})$ bound using a switching-subgradient method related to that of Lan et al.~\cite{lan_algorithms_2020}. Beyond AL and penalty-based approaches, stochastic sequential quadratic programming methods have recently been considered for nonconvex expectation-constrained problems \cite{facchinei_stochastic_2025,shen_sequential_2025}; in particular, Shen et al.~\cite{shen_sequential_2025} establish an $\cO((\epsilong)^{-4})$ sample complexity bound under linear independence constraint qualification (LICQ).

Within AL and penalty-based first-order methods, Li et al.~\cite{li_stochastic_2024} provided an early result for nonconvex expectation-constrained optimization, establishing an $\cO((\epsilong)^{-5})$ sample complexity bound. Their method employs a proximal variant of the STORM estimator \cite{cutkosky_momentumbased_2019} to solve the primal subproblems. Subsequently, Alacaoglu and Wright~\cite{alacaoglu_complexity_2024} developed a single-loop penalty-based method using the STORM estimator and established the same sample complexity bound. Some of the conditions underlying our framework are motivated by \cite{alacaoglu_complexity_2024, li_stochastic_2024}. For weakly convex problems, Liu et al.~\cite{liu_spidertype_2026} developed a single-loop method based on an exact penalty formulation. Xiao et al.~\cite{xiao_developing_2026} developed an AL-based framework which embeds generalized subgradient methods for a single-step primal variable update.
\paragraph{Unconstrained Second-order Methods.}
Second-order methods are typically used to obtain approximate second-order stationary points. To the best of our knowledge, no existing work establishes second-order sample complexity guarantees for nonconvex expectation-constrained optimization. However, second-order methods have been extensively studied for unconstrained nonconvex optimization. In the deterministic setting, Cartis et al. \cite{cartis_complexity_2012} established a lower bound of $\cO(\max((\epsilong)^{-2}, (\epsilonH)^{-3}))$ evaluations for finding an $(\epsilong, \epsilonH)$-accurate second-order stationary point under reasonable assumptions, 
which is attained by cubic regularization methods \cite{cartisAdaptiveCubicRegularisation2011b}. Newton-type methods \cite{xu_newtontype_2020} require $\cO(\max((\epsilong)^{-2}(\epsilonH)^{-1},(\epsilonH)^{-3}))$ evaluations to obtain an $(\epsilong,\epsilonH)$-accurate second-order stationary point.

Second-order methods have also been developed for stochastic optimization problems \cite{allen-zhu_natasha_2018, arjevani_secondorder_2020, fang_spider_2018, tripuraneni_stochastic_2018}, with their sample complexity bounds summarized in \Cref{table:result_comparison}. These are typically stochastic extensions of existing deterministic second-order methods. In \cite{tripuraneni_stochastic_2018}, a stochastic cubic regularized Newton method is proposed with a sample complexity bound of $\cO(\epsilon^{-3.5})$ for obtaining an $(\epsilon,\sqrt{L\epsilon})$-accurate point, where $L$ is a Lipschitz constant. The negative curvature-based approaches in \cite{allen-zhu_natasha_2018, arjevani_secondorder_2020, fang_spider_2018} achieve a similar sample complexity bound of $\tilde{\cO}(\epsilon^{-3.5})$ under the same accuracy requirements. However, when the first-order and second-order tolerance parameters are of the same order, i.e., $\cO(\epsilon_g)=\cO(\epsilon_H)=\cO(\epsilon)$, the sample complexity bound for the stochastic cubic regularized Newton method becomes $\cO(\epsilon^{-7})$, whereas the negative curvature-based approaches achieve a bound of $\tilde{\cO}(\epsilon^{-5})$.

In this work, we consider \cite{allen-zhu_natasha_2018, arjevani_secondorder_2020, fang_spider_2018} as subproblem solvers within the MESCAL framework due to their favorable sample complexity bounds. For ease of comparison, \Cref{table:result_comparison} suppresses the dependence of these bounds on Lipschitz constants, variance bounds, and the initial optimality gap. However, these quantities play an important role in the sample complexity analysis of MESCAL; see the theoretical results in \Cref{sec:inner_loop} and \Cref{sec:active_solvers}.

\subsection{Paper Organization}

The paper is organized as follows. In \Cref{sec:outer_loop}, we present the MESCAL framework and establish iteration complexity guarantees for obtaining an $(\epsilong, \epsilonH)$-accurate expected or $p$-probabilistic second-order stationary point. In \Cref{sec:inner_loop}, we establish sample complexity results for MESCAL with a general solver to solve the subproblems arising within MESCAL.  In \Cref{sec:active_solvers}, under the restriction \(\cX=\RR^d\), we adapt three existing stochastic second-order solvers for use within MESCAL and establish their corresponding sample complexity results. In \Cref{sec:num_expts}, we present numerical results on two nonconvex optimization problems using MESCAL with these solvers. Finally, we provide concluding remarks in \Cref{sec:conclusion}.

%% file: 2-outer-algorithm.tex
\section{Algorithmic Framework} \label{sec:outer_loop} 
In this section, we propose an algorithmic framework for solving \eqref{eq:gen_formula}. The proposed framework, named MESCAL (Method for Expectation-constraints with Second-order Convergence via Augmented Lagrangian), is based on augmented Lagrangian methods. These methods rely on the property of \textit{augmentability}, which states that if a point $x^* \in \cX$ is a local solution of \eqref{eq:gen_formula}, then there exist some $\lambda^* \in \RR^m$ and a scalar $\alpha > 0$ such that $x^*$ is a local minimizer of the augmented Lagrangian problem $\min_{x \in \cX} \cL_\alpha(x, \lambda^*) := f(x) + (\lambda^*)^\top c(x) + \tfrac{\alpha}{2} c(x)^2$ \cite{hestenes_optimization_1975}. A typical augmented Lagrangian method iteratively updates estimates of the primal-dual solution $(x^*, \lambda^*)$ through the following two steps:
\begin{enumerate}[leftmargin=1.5cm]
    \item[Step 1:] Solve the primal subproblem by finding
    \begin{align}\label{eq:subprob}
        x_k \in \arg \min_{x \in \cX} \, \cL_{\alpha_k}(x, \lambda_k) := f(x) + \lambda_k^\top c(x) + \tfrac{\alpha_k}{2} \|c(x)\|^2,
    \end{align}
    where $\alpha_k > 0$ is the penalty parameter and $\lambda_k$ is the dual variable at iteration $k$. The function $\cL_{\alpha_k}$ is referred to as the augmented Lagrangian function.
    \item[Step 2:] Update the dual variable according to
    \begin{align*}
        \lambda_{k+1} = \lambda_k + \alpha_k c(x_k).
    \end{align*}
\end{enumerate}
However, implementing these steps in the nonconvex stochastic optimization setting considered in this paper poses significant challenges. Due to nonconvexity, finding an exact global solution to the primal subproblem is generally intractable. Moreover, even finding an exact local minimizer of the primal subproblem is challenging, as iterative nonlinear optimization algorithms typically require access to function and derivative evaluations of the objective function $f$ and constraint function $c$, both of which involve expectations of stochastic functions. While inexact augmented Lagrangian methods, which solve the primal subproblems only approximately up to a near local minimizer, are well established in the literature \cite{li_inexact_2021, xu_iteration_2021}, they typically impose deterministic inexactness conditions. Such conditions are generally impractical in stochastic settings. To overcome this limitation, we propose solving the primal subproblems inexactly while imposing inexactness conditions in expectation or in probability, rather than deterministically. 

Furthermore, the dual update step requires evaluating the exact constraint function $c(x_k)$ and may result in unbounded dual variables in nonconvex settings. We address this challenge by employing subsampled approximations of $c(x_k)$ and modifying the dual update using an upper-bounding sequence $\{\gamma_k\}$. For notational simplicity, unless otherwise specified, $\nabla$ and $\nabla^2$ applied to the augmented Lagrangian $\cL_\alpha(x,\lambda)${, and its estimators in forthcoming sections,} denote differentiation with respect to the primal variable $x$. In particular, $\nabla \cL_\alpha(x,\lambda):=\nabla_x\cL_\alpha(x,\lambda)$ and $\nabla^2 \cL_\alpha(x,\lambda):=\nabla_{xx}^2\cL_\alpha(x,\lambda)$. 
We also introduce notation for expectations and probabilities that will be used throughout the paper. Unless otherwise specified, let
\begin{align}
    \EE[\cdot] :&= \EE[\cdot\mid x_{-1},\lambda_0],
    \qquad
    \PP(\cdot) := \PP(\cdot\mid x_{-1},\lambda_0),
\end{align}
where the conditioning is with respect to the initial primal dual iterates and all randomness generated by the algorithm thereafter. At the beginning of iteration $k$, let $\mathcal{F}_k$ denote the $\sigma$-algebra generated by all the randomness available up to the point at which the primal dual iterates $(x_{k-1},\lambda_k)$ are obtained. We then use
\begin{align}
    \EE[\cdot\mid\mathcal{F}_k]
    \qquad\text{and}\qquad
    \PP(\cdot\mid\mathcal{F}_k),
\end{align}
to denote the conditional expectation and probability given the history of the algorithm up to the beginning of iteration $k$.

{The step-by-step overview of the MESCAL framework is presented in \Cref{alg:mescal-overview}.}

\begin{algorithm2e}[H]
\caption{Method for Expectation-constraints with Second-order Convergence via Augmented Lagrangian (MESCAL)}
\label{alg:mescal-overview}
\SetKwInOut{Input}{Input}
\DontPrintSemicolon
\Input{Initial iterate $x_{-1} \in \cX$; initial dual variable $\lambda_0 \in \RR^m$; initial penalty parameter $\alpha_0 > 0$; penalty parameter increase factor $\beta > 1$; primal subproblem solver $\cM$ satisfying either Option I (expectation conditions) or Option II (probabilistic conditions); primal subproblem error tolerance sequences $\epsilong_k, \epsilonH_k > 0$; success probability sequences $\{p_k\}$ for Option II, constraint sample size sequence $\{|S^{(d)}_k|\}$; and dual upper bound sequence $\{\gamma_k\}$}
\For{$k = 0,1, \dots$}{
    Update penalty parameter: $\alpha_k = \alpha_0 \, \beta^k$ \label{step:penalty_update}\;
    Starting from $x_{k-1}$, obtain primal iterate $x_k$ by applying  solver $\cM$ to solve subproblem \eqref{eq:subprob} satisfying either Option I or Option II below:
    \label{step:primal_prob} \;
    \textbf{Option I (expectation conditions):}%
    \vspace{-1em}
    \begin{equation}
    \begin{aligned}
        \label{cond:exp_primal_prob}
        & \EE[ \|\mathrm{proj}_{T_\cX(x_k)} (-\nabla \cL_{\alpha_k}(x_k, \lambda_k))\| \mid \cF_k ] \le \epsilong_k, \\
        & \EE\lt[
        \min\nolimits_{u \in T_\cX(x_k), \|u\|=1}
        u^\top \nabla^2 \cL_{\alpha_k}(x_k,\lambda_k) u
        \mid
        \cF_k
        \rt]
        \geq -\epsilonH_k.
    \end{aligned}
    \end{equation} \label{step:expect_primal_prob} \;
    \vspace{-1.5em}
    \textbf{Option II (probabilistic conditions):}%
    \vspace{-1em}
    \begin{equation}
    \begin{aligned}
        \label{cond:prob_primal_prob}
        \PP \lt( 
        \begin{matrix}
            \big\{ \| \mathrm{proj}_{T_\cX(x_k)} (- \nabla \cL_{\alpha_k} (x_k, \lambda_k)) \| \leq \epsilong_k \big\} ~\cap \\
            \big\{ \min\nolimits_{u \in T_\cX(x_k), \|u\|=1} u^\top \big( \nabla^2 \cL_{\alpha_k} (x_k, \lambda_k) \big) u \geq - \epsilonH_k \big\}
        \end{matrix} \middle| \cF_k \rt) \geq p_k.
    \end{aligned}
    \end{equation} \label{step:prob_primal_prob} \;
    \vspace{-2em}
    Draw $|S^{(d)}_k|$ i.i.d. samples $\{\zeta_i\}_{i=1}^{S^{(d)}_k}$ and compute stochastic constraint estimate
    \vspace{-0.5em}
    \begin{equation*}
        \overline{C}_{S^{(d)}_k}(x_k) = \tfrac{1}{|S^{(d)}_k|} \ssum_{i=1}^{|S^{(d)}_k|} C(x_k, \zeta_i)
    \end{equation*}\;
    \vspace{-2em}
    Update dual variable: %
    \begin{equation} \label{eq:mescal-dual-update}
        \lambda_{k+1} = \begin{cases} \lambda_k & \text{if } \|\overline{C}_{S^{(d)}_k}(x_k)\| = 0 \\
        \lambda_k + \min\lt\{ \alpha_k, \tfrac{\gamma_k}{\lt\|\overline{C}_{S^{(d)}_k}(x_k) \rt\|} \rt\} \overline{C}_{S^{(d)}_k}(x_k) & \text{otherwise} 
        \end{cases}
    \end{equation} \label{step:dual-update}
    \vspace{-1em}
}
\end{algorithm2e}
\begin{remark}
    We make a few remarks about the MESCAL framework as presented in \Cref{alg:mescal-overview}:

    \begin{itemize}
        \item The framework follows the structure of a typical augmented Lagrangian method: the primal subproblem is solved inexactly in Line~\ref{step:primal_prob}, followed by the dual variable update in Line~\ref{step:dual-update}. %
        Although augmented Lagrangian methods traditionally offer stronger practical and theoretical advantages over quadratic penalty methods by incorporating dual updates to improve feasibility and conditioning of the subproblem, these benefits are not fully reflected in existing theoretical analysis of nonconvex problems. In particular, nonasymptotic guarantees for augmented Lagrangian variants with a fixed penalty parameter and dual step size \cite{xie_complexity_2021} exhibit worse iteration complexity than those relying on increasing penalty parameters and decaying dual step sizes \cite{sahin_inexact_2019}, as surveyed by Alacaoglu and Wright \cite{alacaoglu_complexity_2024}. Accordingly, we increase the penalty parameter $\alpha_k$ at each iteration in Line~\ref{step:penalty_update}. Consequently, we modify the dual update to ensure that the sequence of dual variables remains uniformly bounded by using an upper-bounding sequence ${\gamma_k}$.

        \item We impose second-order inexactness conditions for the primal subproblem at each iteration, either in expectation as in~\eqref{cond:exp_primal_prob} or in probability as in~\eqref{cond:prob_primal_prob}. These conditions ensure that each primal subproblem is solved to sufficient accuracy for the overall framework to produce an approximate second-order stationary point of~\eqref{eq:gen_formula}. They also enable us to establish iteration complexity results for obtaining such an iterate; see \Cref{thm:outer-iteration-complexity}. We note that these inexactness conditions can be satisfied by a (stochastic) second-order solver $\cM$. For example, under $\cX = \RR^d$, the method in~\cite{berahas_exploiting_2026} uses a condition analogous to~\eqref{cond:exp_primal_prob} to define second-order stationarity in expectation. Similarly, the methods in~\cite{allen-zhu_natasha_2018, arjevani_secondorder_2020, berahas_exploiting_2026, fang_spider_2018} use conditions analogous to~\eqref{cond:prob_primal_prob} to define second-order stationarity in probability.
        
        \item In \Cref{sec:active_solvers}, we discuss three well-known second-order solvers with established theoretical complexity guarantees for solving primal subproblems of the form~\eqref{eq:subprob} under $\cX = \RR^d$ while satisfying the inexactness conditions given in \Cref{alg:mescal-overview}. Furthermore, for each of these solvers, we establish theoretical sample complexity results for obtaining approximate second-order stationary points. We note that we were unable to identify second-order solvers with established sample complexity guarantees for the case $\cX \subset \RR^d$. Consequently, we restrict our sample complexity results to the case $\cX = \RR^d$.

    \end{itemize}
\end{remark}

Next, we state the assumptions on the objective and constraint functions required to establish the iteration complexity results for the proposed framework.

\begin{assumption} \label{assn:abs-true-bounds}
    The functions $f: \cX \rightarrow \RR$, and $c: \cX \rightarrow \RR^m$ are twice continuously differentiable, where $\cX \subseteq \RR^d$ is a closed and convex set. Moreover, there exist positive constants $\kappa_f$, $\kappa_{\nabla f}$, $\kappa_c$, $\kappa_{\nabla c}$ such that
    \begin{equation*}
        \begin{aligned}
            && |f(x)| \leq \kappa_{f}, && \|\nabla f(x)\| \leq \kappa_{\nabla f}, &&
            \|c(x)\| \leq \kappa_{c}, && \|\nabla c(x)\| \leq \kappa_{\nabla c}, && \|\nabla^2 c(x)\| \leq \kappa_{\nabla^2 c}, && \forall  x \in \cX.
        \end{aligned}
    \end{equation*}
\end{assumption}

\begin{assumption} \label{assn:reg-c-nabla-c}
    There exists a positive constant $C_{\text{reg}}$ such that
    \begin{align*}
        C_{\text{reg}} \|c(x) \| \leq
        \| \mathrm{proj}_{T_\cX (x)}(-\nabla c(x)^\top c(x)) \|, \quad \forall x \in \cX.
    \end{align*}
\end{assumption}

\begin{remark}
The boundedness conditions imposed in \Cref{assn:abs-true-bounds} are standard assumptions in nonconvex constrained optimization; see, for example, \cite{alacaoglu_complexity_2024, li_rateimproved_2021, lin_complexity_2022, sahin_inexact_2019}.  \Cref{assn:reg-c-nabla-c} is a regularity condition on the constraints, and similar assumptions are commonly employed in the literature; see \cite{alacaoglu_complexity_2024, li_rateimproved_2021, lin_complexity_2022, lu_singleloop_2022, sahin_inexact_2019, xiao_developing_2026}. In the case $\cX = \RR^d$, \Cref{assn:reg-c-nabla-c} implies that $\|\nabla c(x)^T c(x)\| \geq C_{\text{reg}}\|c(x)\|$, which ensures that $\nabla c(x)^\top c(x)$ cannot be arbitrarily small relative to $c(x)$. Such regularity conditions, including Slater's condition, LICQ, and MFCQ (Mangasarian-Fromovitz constraint qualification), are useful in establishing the convergence of iterative optimization algorithms to local solutions in nonconvex optimization settings. We emphasize that the constants appearing in these assumptions need not be known to establish our theoretical convergence guarantees or to implement the algorithm in practice; it is only required that such constants exist. We also note that an equivalent condition to \Cref{assn:reg-c-nabla-c}, which is commonly used in the literature, is
\begin{align*}
    C_{\text{reg}} \|c(x) \| \leq
    \mathrm{dist}(-\nabla c(x)^\top c(x), N_\cX (x)) \quad \forall x \in \cX.
\end{align*}
However, we use the projection-based condition in \Cref{assn:reg-c-nabla-c} because it is more convenient for our analysis.
\end{remark}

We now present the main theoretical result of this section on the iteration complexity of \Cref{alg:mescal-overview}. 

\begin{theorem}[Iteration complexity]%
\label{thm:outer-iteration-complexity}
    Suppose Assumptions \ref{assn:abs-true-bounds} and \ref{assn:reg-c-nabla-c} hold. Let $\{x_k\}$ be the sequence of iterates generated by \Cref{alg:mescal-overview} with initial iterate $x_{-1} \in \cX$ and initial dual iterate $\lambda_0 \in \RR^m$. Suppose that the dual bound parameters $\gamma_k > 0$ be a bounded sequence satisfying $\sum_{k=0}^{\infty} \gamma_k = C_\gamma < \infty$, and that the dual sample sizes satisfy $|S^{(d)}_k| \geq 1$ for all $k \in \NN_0$. If the error bounds are chosen such that $\epsilong_k {\leq} \epsilong < 1$ and $\epsilonH_k {\leq} \epsilonH < 1$ for any $k \in \NN_0$, then,  for any $k = K \geq K^*$, where $\Lambda:= \|\lambda_0\| + C_\gamma$ and 
    \begin{equation}\label{eq:iter_comp_log}
    K^* := \lt\lceil \log_{\beta} \lt( \tfrac{1 + \kappa_{\nabla f} + \kappa_{\nabla c} \Lambda}{C_{\text{reg}} \alpha_0 \epsilong} \rt) \rt\rceil = \cO(\log(\tfrac{1}{\epsilong})),
    \end{equation}
    $x_K$ satisfies the following:
    \begin{enumerate}
    \item[(i)] If \textbf{Option I} is chosen in \Cref{alg:mescal-overview}, then %
    $x_K$ is an $(\epsilong, \epsilonH)$-accurate expected second-order stationary point satisfying \eqref{eq:opt_cond_exp}, with $\tilde{\lambda} := \lambda_K + \alpha_K c(x_K)$. %

    \item[(ii)] If \textbf{Option II} is chosen in \Cref{alg:mescal-overview} with $p_k = p$ for all $k \in \NN_0$, then %
    $x_K$ is an $(\epsilong, \epsilonH)$-accurate $p$-probabilistic second-order stationary point satisfying \eqref{eq:opt_cond_prob}, with $\tilde{\lambda} := \lambda_K + \alpha_K c(x_K)$. %
    \end{enumerate}
\end{theorem}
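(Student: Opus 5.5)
The argument rests on the identity $\nabla \cL_{\alpha_K}(x_K,\lambda_K) = \nabla f(x_K) + \nabla c(x_K)^\top \tilde\lambda$ with $\tilde\lambda := \lambda_K + \alpha_K c(x_K)$. With this identity, the first-order condition in Option I/II transfers directly to \eqref{eq:eps-kkt}(i). For the second-order condition we use
\begin{align*}
\nabla^2 \cL_{\alpha_K}(x_K,\lambda_K) = \nabla^2 f(x_K) + \ssum_{j} \tilde\lambda^{(j)} \nabla^2 c^{(j)}(x_K) + \alpha_K \nabla c(x_K)^\top \nabla c(x_K).
\end{align*}
For $u \in T^{(c)}_\cX(x_K) \subseteq T_\cX(x_K)$ the last term vanishes because $\nabla c(x_K)u=0$. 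Since $T^{(c)}_\cX(x_K)$ is a subset of $T_\cX(x_K)$, the minimum over the smaller set is at least the minimum over the larger one. Hence the quadratic form of the Lagrangian Hessian at $\tilde\lambda$ over unit vectors in $T^{(c)}_\cX(x_K)$ is at least the quantity controlled in \eqref{cond:exp_primal_prob}/\eqref{cond:prob_primal_prob}. (If $T^{(c)}$ contains no unit vector, the minimum is $+\infty$ and the condition is vacuous.) The first and third stationarity conditions therefore hold, in expectation or on the success event, with tolerances $\epsilong_K\le\epsilong$ and $\epsilonH_K\le\epsilonH$. The real content is the feasibility bound $\|c(x_K)\|\le\epsilong$.

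The dual iterates are bounded deterministically. By \eqref{eq:mescal-dual-update}, each increment has norm at most $\min\{\alpha_k\|\overline C\|,\gamma_k\}\le\gamma_k$, so $\|\lambda_k\|\le\|\lambda_0\|+\sum_k\gamma_k=\Lambda$ for all $k$, almost surely. This needs only $|S^{(d)}_k|\ge1$ and no variance assumptions.

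For feasibility, set $g := -\nabla\cL_{\alpha_K}(x_K,\lambda_K) = -\nabla f - \nabla c^\top\lambda_K - \alpha_K\nabla c^\top c$. Projection onto the closed convex cone $T:=T_\cX(x_K)$ is positively homogeneous and $1$-Lipschitz. (I would invoke the properties in Remark~\ref{rmk:proj-tan-cone}: for a cone, $\|\mathrm{proj}_T(a+b)-\mathrm{proj}_T(a)\|\le\|b\|$.) This gives
\begin{align*}
\alpha_K \|\mathrm{proj}_{T}(-\nabla c^\top c)\| = \|\mathrm{proj}_T(-\alpha_K\nabla c^\top c)\| \le \|\mathrm{proj}_T(g)\| + \|\nabla f\| + \|\nabla c\|\|\lambda_K\|.
\end{align*}
Applying Assumption~\ref{assn:reg-c-nabla-c} and Assumption~\ref{assn:abs-true-bounds} yields
\begin{align*}
C_{\text{reg}}\alpha_K\|c(x_K)\| \le \|\mathrm{proj}_T(g)\| + \kappa_{\nabla f} + \kappa_{\nabla c}\Lambda.
\end{align*}

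Under Option I, take conditional expectations and then total expectations. Using $\epsilong_K<1$ gives $\EE\|c(x_K)\|\le (1+\kappa_{\nabla f}+\kappa_{\nabla c}\Lambda)/(C_{\text{reg}}\alpha_0\beta^K)\le\epsilong$ for $K\ge K^*$, by the definition of $K^*$. Under Option II, on the success event the same bound holds pathwise. That event has probability at least $p$ after taking total expectation of the conditional probability. The event in \eqref{eq:opt_cond_prob} contains the success event, since all three conditions hold on it.

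The step I expect to need the most care is the cone-projection manipulation: the triangle-type inequality for $\mathrm{proj}_{T}$, which is nonlinear. I would justify it via nonexpansiveness of the projection onto a closed convex set together with $\mathrm{proj}_T(ta)=t\,\mathrm{proj}_T(a)$ for $t>0$. A secondary issue is the measurability of the minimum in the expected second-order condition, and the passage from conditional to total expectation. Both follow by the tower property, since $\lambda_K$ is $\cF_K$-measurable.
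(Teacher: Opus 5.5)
Your proposal is correct and follows essentially the same route as the paper's proof. Both use the deterministic dual bound $\|\lambda_k\|\le\Lambda$ from the $\gamma_k$-truncated update, and the identity $\nabla\cL_{\alpha_K}(x_K,\lambda_K)=\nabla f(x_K)+\nabla c(x_K)^\top\tilde\lambda$ together with the vanishing of $\alpha_K\nabla c(x_K)^\top\nabla c(x_K)$ on the critical cone and $T^{(c)}_\cX(x_K)\subseteq T_\cX(x_K)$; feasibility comes from Assumption~\ref{assn:reg-c-nabla-c}, positive homogeneity, and nonexpansiveness of $\mathrm{proj}_{T_\cX(x_K)}$ (the paper's Lemma~\ref{lemm:proj-bound}), concluded via the tower property under Option I and event inclusion on the success event under Option II (your use of $\epsilong$ rather than $\epsilong_K$ as the feasibility threshold is the right one).
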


\begin{proof}

We first show that the sequence $\{\lambda_k\}$ is uniformly bounded. From \eqref{eq:mescal-dual-update}, for any $k \in \NN_0$, if $\lt\| \overline{C}_{S^{(d)}_k}(x_k) \rt\| = 0$, then $\|\lambda_{k+1}\| = \|\lambda_k\|$. If $\lt\|\overline{C}_{S^{(d)}_k}(x_k)\rt\| \neq 0$, then 
\begin{align*}
    \|\lambda_{k+1}\| \leq \|\lambda_k\| + \lt\|\min \lt\{ \alpha_k, \tfrac{\gamma_k}{\| \overline{C}_{S^{(d)}_k}(x_k) \|} \rt\} \overline{C}_{S^{(d)}_k}(x_k)\rt\| \leq \|\lambda_k\| + \gamma_k.
\end{align*}
Therefore, $\|\lambda_{k+1}\| \leq \|\lambda_k\| + \gamma_k$ for all $k \in \NN_0$. Summing this inequality from $k=0$ to $k=t-1$ for any $t\geq 1$ yields
\begin{align}
    \|\lambda_t\| \leq \|\lambda_0\| + \sum_{k=0}^{t-1}\gamma_k \leq \Lambda < \infty.
\end{align}
Therefore, we have $\|\lambda_k\| \leq \Lambda$ for any $k \in \NN_0$. Using this bound, the feasibility error at any $k \in \NN_0$ can be bounded as
\begin{align}
        \|c(x_k) \| &\leq \tfrac{1}{C_{\text{reg}}} \| \mathrm{proj}_{T_\cX (x_k)}(-\nabla c(x_k)^\top c(x_k)) \| \notag \\
        &= \tfrac{1}{C_{\text{reg}}}  \lt\| \mathrm{proj}_{T_\cX (x_k)} \lt( \tfrac{- \nabla \cL_{\alpha_k}(x_k, \lambda_k) + \nabla f(x_k) + \nabla c(x_k)^\top \lambda_k }{\alpha_k} \rt) \rt\|  \notag \\
        &= \tfrac{1}{C_{\text{reg}} \alpha_k} \| \mathrm{proj}_{T_\cX (x_k)} \lt( - \nabla \cL_{\alpha_k}(x_k, \lambda_k) + \nabla f(x_k) + \nabla c(x_k)^\top \lambda_k  \rt) \| \notag \\
        &\leq \tfrac{1}{C_{\text{reg}} \alpha_k} \lt(\| \mathrm{proj}_{T_\cX (x_k)} (- \nabla \cL_{\alpha_k}(x_k, \lambda_k)) \| + \| \nabla f(x_k) + \nabla c(x_k)^\top \lambda_k  \| \rt)   \notag \\
        & \leq \tfrac{1}{C_{\text{reg}} \alpha_k} \lt(\| \mathrm{proj}_{T_\cX (x_k)} (- \nabla \cL_{\alpha_k}(x_k, \lambda_k)) \| + \| \nabla f(x_k) \| + \| \nabla c(x_k) \| \| \lambda_k \|\rt)  \notag \\
        &\leq \tfrac{\| \mathrm{proj}_{T_\cX (x_k)} (- \nabla \cL_{\alpha_k}(x_k, \lambda_k)) \| + \kappa_{\nabla f} + \kappa_{\nabla c} \Lambda }{C_{\text{reg}} \alpha_k}, 
        \label{eq:feas-equiv}
\end{align}
where the first inequality follows from \Cref{assn:reg-c-nabla-c}, the second equality follows from the positive homogeneity property of the projection onto the tangent cone, i.e., $\mathrm{proj}_{T_\cX (x_k)}(a v) = a ~\mathrm{proj}_{T_\cX (x_k)}(v)$ for any $a > 0$ and $v \in \RR^d$, the third inequality follows from \Cref{lemm:proj-bound}, and the last inequality follows from \Cref{assn:abs-true-bounds} and $\|\lambda_k\| \leq \Lambda$.

    Let $\tilde{\lambda}_k := \lambda_k + \alpha_k c(x_k)$ for all $k \in \NN_0$. Then, we have
    \begin{align}
    \| \mathrm{proj}_{T_\cX(x_k)}(- \nabla f(x_k) - \nabla c(x_k)^\top \tilde{\lambda}_k) \|
        &= \| \mathrm{proj}_{T_\cX(x_k)}(- \nabla f(x_k) - \nabla c(x_k)^\top (\lambda_k + \alpha_k c(x_k))) \| \notag \\
        &= \| \mathrm{proj}_{T_\cX(x_k)}(- \nabla \cL_{\alpha_k}(x_k, \lambda_k)) \| \label{eq:1st-ord-equiv}.
    \end{align}

For the second-order condition, consider
\begin{align}
        &\min_{\substack{
        u \in T_\cX^{(c)}(x_k), \,
        \|u\|=1
        }}
        u^\top \lt(\nabla^2 f(x_k) + \ssum_{j=1}^m\tilde{\lambda}^{(j)}_k \nabla^2 c^{(j)}(x_k)\rt) u
        \nonumber \\
       &\qquad = \min_{\substack{
        u \in T_\cX^{(c)}(x_k), 
        \|u\|=1
        }} u^\top \lt(\nabla^2 f(x_k) + \ssum_{j=1}^m(\lambda^{(j)}_k + \alpha_k c^{(j)}(x_k)) \nabla^2 c^{(j)}(x_k)\rt) u + \alpha_k u^\top \nabla c(x_k)^\top \nabla c(x_k) u \nonumber \\
        &\qquad= \min_{\substack{
        u \in T_\cX^{(c)}(x_k), \,
        \|u\|=1}} u^\top (\nabla^2 \cL_{\alpha_k}(x_k, \lambda_k)) u \nonumber \\
        &\qquad\geq \min_{\substack{
        u \in T_\cX(x_k), \,
        \|u\|=1}} u^\top (\nabla^2 \cL_{\alpha_k}(x_k, \lambda_k)) u,  \label{eq:2nd-ord-equiv}%
\end{align}
where the first equality follows because $u \in T_\cX^{(c)}(x_k)$ implies $u \in \mathrm{Null}(\nabla c(x_k))$, and hence $\nabla c(x_k)u=0$, and the inequality is due to the fact that $T_\cX^{(c)}(x_k) \subseteq T_\cX(x_k)$.

Case (i): If \textbf{Option I} is chosen in \Cref{alg:mescal-overview}, then, taking expectations in \eqref{eq:1st-ord-equiv},and  using the tower property of conditional expectations and the first inequality in \eqref{cond:exp_primal_prob}, we have, for all $k \in \NN_0$,
\begin{align}\label{eq:1st-order-exp}
\EE[\| \mathrm{proj}_{T_\cX(x_k)} ( - \nabla f(x_k) - \nabla c(x_k)^\top \tilde{\lambda}_k )\|] &= \EE[\EE[\| \mathrm{proj}_{T_\cX(x_k)} ( - \nabla \cL_{\alpha_k} (x_k, \lambda_k)) \| \mid \cF_k]] \leq \epsilong_k. 
\end{align}

Using \eqref{eq:1st-order-exp}, taking expectations in \eqref{eq:feas-equiv},  and choosing $k=K\geq K^*$, we obtain
\begin{align}\label{eq:feas-exp}
   \EE[\|c(x_K) \|] &\leq  \tfrac{\epsilong_K + \kappa_{\nabla f} + \kappa_{\nabla c} \Lambda }{C_{\text{reg}} \alpha_K} \leq \tfrac{1 + \kappa_{\nabla f} + \kappa_{\nabla c} \Lambda }{C_{\text{reg}} \alpha_K} \leq \epsilong,
\end{align}
where the last inequality is due to \eqref{eq:iter_comp_log}. Next, taking expectations in \eqref{eq:2nd-ord-equiv}, and using the tower property of conditional expectations and the second inequality in \eqref{cond:exp_primal_prob}, we have, for all $k \in \NN_0$,
\begin{align}
&\EE\lt[
        \min_{\substack{
        u \in T_\cX^{(c)}(x_k), \,
        \|u\|=1
        }}
        u^\top \lt(\nabla^2 f(x_k) + \ssum_{j=1}^m\tilde{\lambda}^{(j)}_k \nabla^2 c^{(j)}(x_k)\rt) u \rt] \nonumber \\
        &\qquad \geq \EE \lt[\EE \lt[\min_{\substack{
        u \in T_\cX(x_k), \,
        \|u\|=1}} u^\top (\nabla^2 \cL_{\alpha_k}(x_k, \lambda_k)) u \mid \cF_k \rt]\rt] \geq - \epsilonH_k.\label{eq:2nd-ord-equiv-expect}
\end{align}
Therefore, using $\epsilong_k {\leq} \epsilong$, $\epsilonH_k {\leq} \epsilonH$,  \eqref{eq:feas-exp}, and setting $k=K$ in \eqref{eq:1st-order-exp} and \eqref{eq:2nd-ord-equiv-expect}, we conclude that $(x_K, \tilde{\lambda}_K)$ satisfies \eqref{eq:opt_cond_exp}.

Case (ii): If \textbf{Option II} is chosen in \Cref{alg:mescal-overview}, then, define the following events:
\begin{align*}
    P_{1, k} &:= \{ \| \mathrm{proj}_{T_\cX(x_k)} ( - \nabla f(x_k) - \nabla c(x_k)^\top \tilde{\lambda}_k) \| \leq \epsilong_k \}, \\
    P_{2, k} &:= \lt\{\min_{\substack{
        u \in T_\cX^{(c)}(x_k), \,
        \|u\|=1
        }}
        u^\top \lt(\nabla^2 f(x_k) + \ssum_{j=1}^m\tilde{\lambda}^{(j)}_k \nabla^2 c^{(j)}(x_k)\rt) u \geq-\epsilonH_k\rt\}, \\
    P_{3, k} &:=\lt\{ \|c(x_k)\| \leq \epsilong_k\rt\}, \\
     E_{1, k} &:= \{ \| \mathrm{proj}_{T_\cX(x_k)} ( -\nabla \cL_{\alpha_k} (x_k, \lambda_k)) \| \leq \epsilong_k \}, \\
      E_{2, k} &:= \lt\{ \min_{\substack{
        u \in T_\cX(x_k), \,
        \|u\|=1}} u^\top (\nabla^2 \cL_{\alpha_k}(x_k, \lambda_k)) u \geq -\epsilonH_k\rt\}.
\end{align*}
From \eqref{eq:1st-ord-equiv}, we have $P_{1,k}=E_{1,k}$, while \eqref{eq:2nd-ord-equiv} implies $E_{2,k}\subseteq P_{2,k}$. Moreover, by \eqref{eq:feas-equiv}, we have $E_{1,K}\subseteq P_{3,K}$ for any $K\geq K^*$. Therefore,
\begin{align*}
    \PP\lt( P_{1, K} \cap P_{2, K} \cap P_{3, K} \rt) &=  \PP\lt( E_{1, K} \cap P_{2, K} \cap P_{3, K} \rt) \geq \PP\lt( E_{1, K} \cap E_{2,K} \rt).
\end{align*}
Consider the indicator variable $\mathbf{1}_{E_{1, K} \cap E_{2,K}}$ for event $E_{1, K} \cap E_{2,K}$. %
{By} the tower property of conditional expectations and \eqref{cond:prob_primal_prob},
\begin{align*}
    \EE[\mathbf{1}_{E_{1, K} \cap E_{2,K}}] = \EE[\EE[\mathbf{1}_{E_{1, K} \cap E_{2,K}} \mid \cF_K]] \geq p_K.
\end{align*}
Therefore, using $p_K=p$, $\epsilong_k {\leq}\epsilong$, and $\epsilonH_k {\leq}\epsilonH$, we conclude that $x_K$ satisfies \eqref{eq:opt_cond_prob}, with $\tilde{\lambda} := \tilde{\lambda}_K$.

\end{proof}

\begin{remark} \label{rmk:iter-complexity}
We make the following remarks about \Cref{thm:outer-iteration-complexity}.

    \begin{itemize}
        \item The dual upper-bounding sequence $\{\gamma_k\}$ is introduced to ensure that the dual variables remain uniformly bounded. With this bound, our analysis follows a structure similar to that of penalty-based methods. In particular, due to the nonconvexity of the problem, our analysis does not exploit the potential benefits of the dual updates in establishing the iteration complexity result. In practice, however, dual updates can still be beneficial. Accordingly, in our numerical experiments in \Cref{sec:num_expts}, we set $\gamma_k = \infty$ for all $k \in \NN_0$, thereby removing the explicit upper bound on the dual updates. We observe that this choice provides good empirical performance while the resulting dual variables remain bounded in our experiments.

        \item The iteration complexity result in \Cref{thm:outer-iteration-complexity} holds for any dual sample size $|S^{(d)}_k|\geq 1$, and the effect of this sample size is not reflected in the complexity bound. This is a consequence of the dual upper-bounding sequence $\{\gamma_k\}$, which allows the analysis to accommodate arbitrary dual sample sizes. In practice, however, sufficiently large sample sizes can provide more accurate constraint estimates and, consequently, more accurate dual updates. Inspired by \cite{li_stochastic_2024} and {the fact that we can maintain near-accurate dual updates without compromising sample complexity bounds,}
        we choose $S^{(d)}_k = \Theta( (\epsilong)^{-2} )$,  which, under the standard variance assumption, gives $\EE[\| \overline{C}_{S^{(d)}_k}(x_k) - c(x_{k}) \|^2] = \cO( 1 / |S^{(d)}_k| ) = \cO((\epsilong)^2)$. Thus, the sampled constraint function provides a sufficiently accurate approximation of $c(x_{k})$ in expectation. This choice of dual sample size does not alter the overall sample complexity results established in \Cref{sec:inner_loop} or \Cref{sec:active_solvers}.

        \item Although \Cref{thm:outer-iteration-complexity} requires $\cO(\log(1/\epsilong))$ outer iterations, the same approximate second-order stationarity guarantees can be obtained using a single outer iteration by choosing the initial penalty parameter as $\alpha_0 = \cO(\tfrac{1}{\epsilong})$. {Nevertheless, using single iteration solves can yield worse sample complexity bounds in expectation and in probability, similar to the deterministic sample complexity bounds observed in \Cref{sec:active_solvers} (see \Cref{lemm:1-lower-bound-finite} and \Cref{rmk:opt-gap-discuss}).
        }
        Moreover, the use of multiple outer iterations can provide both theoretical and practical benefits in convex settings \cite{bollapragada_adaptive_2023, bollapragada_augmented_2026}. 
        Finally, although the theorem is stated with 
        $p_k=p$ for all $k\in\NN_0$, this sequence may instead be chosen to decrease gradually, provided that 
        $p_K\leq p$ for $K\geq K^*$; the same conclusions then follow \cite{bollapragada_adaptive_2023, bollapragada_augmented_2026}.
    \end{itemize}
\end{remark}

%% file: 3-inner-algorithm.tex
\section{Sample Complexity} \label{sec:inner_loop}

In this section, we establish sample complexity, as defined in \Cref{def:sample_complexity}, required to obtain {an} $(\epsilong, \epsilonH)$-accurate expected or $p$-probabilistic second-order stationary point of \eqref{eq:gen_formula} for the proposed MESCAL framework. We consider a general solver $\cM$ for solving the primal subproblem inexactly at each iteration $k \in \NN_0$ in \Cref{alg:mescal-overview} to achieve iterates $x_k$ satisfying \eqref{cond:exp_primal_prob} or \eqref{cond:prob_primal_prob} under reasonable assumptions on the theoretical properties of the solver $\cM$ (see \Cref{assn:theoretical-sample-size}). In \Cref{sec:active_solvers}, we discuss three specific existing stochastic second-order solvers that can be employed when $\cX = \RR^d$ and \textbf{Option II} is chosen in \Cref{alg:mescal-overview}, as we are not aware of existing solvers with theoretical guarantees for the more general settings considered here.

We first introduce additional notation to denote the stochastic estimates of the gradient and Hessian of the augmented Lagrangian function used by the primal solver $\cM$, as their exact counterparts may be unavailable or expensive to compute. To simplify notation throughout the analysis, we define a joint random vector $\theta = (\xi,\zetaone,\zetatwo)$, where $\xi \sim \cP_\xi$ and $\zetaone,\zetatwo \simiid \cP_\zeta$. We define the probability space on which $\theta$ is defined as  $(\cS_\theta,\cG_\theta,\cP_\theta) := \lt( \cS_\xi \times \cS_{\zeta}\times\cS_{\zeta}, \cG_\xi \otimes \cG_\zeta \otimes \cG_\zeta, \cP_\xi \otimes \cP_\zeta \otimes \cP_\zeta \rt)$. Next, we define the following stochastic gradient and Hessian estimators of the augmented Lagrangian function:
\begin{align}
    & \nabla \bar{\cL}_\alpha(x, \lambda; \theta) := \nabla F(x, \xi) + \nabla C(x, \zetaone)^\top (\lambda + \alpha C(x, \zetatwo)), \label{eq:sample_gradient_estimator} \\
    & \nabla^2 \bar{\cL}_\alpha (x, \lambda; \theta) := \nabla^2 F(x, \xi) + \sum_{j=1}^m (\lambda^{(j)} + \alpha C^{(j)}(x, \zetatwo)) \nabla^2 C^{(j)}(x, \zetaone) + \alpha \nabla C(x, \zetaone)^\top \nabla C(x, \zetatwo)
    \label{eq:sample_hessian_estimator}
\end{align}
By the independence of $\zetaone$ and $\zetatwo$, the estimators in \eqref{eq:sample_gradient_estimator} and \eqref{eq:sample_hessian_estimator} are unbiased. That is, 
$\EE[\nabla \bar{\cL}_\alpha (x, \lambda; \theta)] = \nabla \cL_\alpha (x, \lambda)$ and $\EE[\nabla^2 \bar{\cL}_\alpha (x, \lambda; \theta)] = \nabla^2 \cL_\alpha (x, \lambda)$.

Next, we state the assumptions used to establish sample complexity for the MESCAL framework. 

\begin{assumption} \label{assn:lip-nabla-f-nabla-c}
    The stochastic objective function gradient $\nabla F(x, \xi)$ is $L_{\nabla f}$-Lipschitz continuous $\cP_\xi$-almost surely on $\cX$, and each stochastic constraint gradient $\nabla C^{(j)}(x, \zeta)$ is $L_{\nabla c}$-Lipschitz continuous $\cP_\zeta$-almost surely on $\cX$. That is,
    \begin{align*}
        \| \nabla F(x, \xi) - \nabla F(y, \xi) \| &\leq L_{\nabla f} \|x - y \|, \quad \forall x, y \in \cX, \quad \cP_\xi\text{-a.s., } \quad \text{and} \\
        \| \nabla C^{(j)}(x, \zeta) - \nabla C^{(j)}(y, \zeta) \| &\leq L_{\nabla c} \|x - y \|, \quad \forall j \in \{1,2, \cdots, m\}, \quad \forall x, y \in \cX, \quad \cP_\zeta\text{-a.s.}
    \end{align*}
\end{assumption}

\begin{assumption} \label{assn:lip-nabla2-f-nabla2-c}
    The stochastic objective function Hessian $\nabla^2 F(x, \xi)$ is $L_{\nabla^2 f}$-Lipschitz continuous $\cP_\xi$-almost surely on $\cX$, and each stochastic constraint Hessian $\nabla^2 C^{(j)}(x, \zeta)$ is $L_{\nabla^2 c}$-Lipschitz continuous $\cP_\zeta$-almost surely on $\cX$. That is,
    \begin{align*}
        \| \nabla^2 F(x, \xi) - \nabla^2 F(y, \xi) \| &\leq L_{\nabla^2 f} \|x - y \|, \quad \forall x, y \in \cX, \quad \cP_\xi\text{-a.s., } \quad \text{and}\\
         \| \nabla^2 C^{(j)}(x, \zeta) - \nabla^2 C^{(j)}(y, \zeta) \| &\leq L_{\nabla^2 c} \|x - y \|, \quad \forall j \in \{1,2, \cdots, m\}, \quad \forall x, y \in \cX, \quad \cP_\zeta\text{-a.s.}
    \end{align*}
\end{assumption}

\begin{assumption} \label{assn:var-bounds}
    The stochastic objective function gradient and Hessian satisfy uniform error bounds $\cP_\xi$-almost surely. That is, there exist constants $\sigma_{\nabla f}, \sigma_{\nabla^2 f} \in [0, \infty)$ such that
    \begin{align*}
        \|\nabla F(x, \xi) - \nabla f(x)\| &\leq \sigma_{\nabla f}, \quad \text{and} \quad \
        \|\nabla^2 F(x, \xi) - \nabla^2 f(x)\| \leq \sigma_{\nabla^2 f} \quad \forall x \in \cX, \quad \cP_\xi\text{-a.s.}
    \end{align*}
    In addition, the stochastic constraint function, constraint Jacobian and constraint Hessian satisfy uniform error bounds $\cP_\zeta$-almost surely. That is, there exist constants $\sigma_{c}, \sigma_{\nabla c}, \sigma_{\nabla^2 c} \in [0,\infty)$ such that
    \begin{align*}
        & \| C(x, \zeta) - c(x) \| \leq \sigma_{c},
        \| \nabla C(x, \zeta) - \nabla c(x) \| \leq \sigma_{\nabla c}, \text{ and }
        \| \nabla^2 C(x, \zeta) - \nabla^2 c(x) \| \leq \sigma_{\nabla^2 c} \quad \forall x \in \cX, \cP_\zeta\text{-a.s.}
    \end{align*}
\end{assumption}
The following lemma provides useful results as a consequence of these assumptions.
\begin{lemma} \label{lemm:lip-c}
    Suppose Assumptions \ref{assn:abs-true-bounds} and \ref{assn:var-bounds} hold. Then, the stochastic objective function gradient and stochastic constraint function, gradient, and Hessian are bounded. That is, %
    \begin{align*}
       & \|\nabla F(x,\xi)\| \leq \kappa_{\nabla f} + \sigma_{\nabla f}, \quad \forall x \in \cX, \quad \cP_\xi\text{-a.s.},\\ 
       & \|C(x, \zeta)\| \leq \kappa_c + \sigma_c, \|\nabla C(x, \zeta)\| \leq \kappa_{\nabla c} + \sigma_{\nabla c}, \text{ and } \|\nabla^2 C(x, \zeta)\| \leq \kappa_{\nabla^2 c} + \sigma_{\nabla^2 c}, \quad \forall x \in \cX, \quad \cP_\zeta\text{-a.s.}
    \end{align*}
    Furthermore, each stochastic constraint function $C^{(j)}$ is Lipschitz continuous $\cP_\zeta$-almost surely on $\cX$. That is, %
    $$|C^{(j)}(x, \zeta)-C^{(j)}(y, \zeta)| \leq L_c \|x-y\|, \qquad \forall j \in \{1,2, \cdots, m\}, \quad \forall x, y \in \cX, \quad \cP_\zeta\text{-a.s.,}$$
    where $L_c := \kappa_{\nabla c} + \sigma_{\nabla c} < \infty$.
\end{lemma}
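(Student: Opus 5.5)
The proof is a direct combination of the deterministic bounds in \Cref{assn:abs-true-bounds} with the almost-sure error bounds in \Cref{assn:var-bounds}, using the triangle inequality, plus one application of the mean value theorem along segments in the convex set $\cX$.

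For the boundedness claims, I will fix $x \in \cX$ and write each stochastic quantity as its expected value plus an error term. For example,
\begin{align*}
\|\nabla F(x,\xi)\| \le \|\nabla f(x)\| + \|\nabla F(x,\xi) - \nabla f(x)\| \le \kappa_{\nabla f} + \sigma_{\nabla f}, \quad \cP_\xi\text{-a.s.}
\end{align*}
The same two-term argument gives the bounds $\kappa_c+\sigma_c$, $\kappa_{\nabla c}+\sigma_{\nabla c}$ and $\kappa_{\nabla^2 c}+\sigma_{\nabla^2 c}$ for $C$, $\nabla C$ and $\nabla^2 C$. These use the bounds on $c$, $\nabla c$ and $\nabla^2 c$ from \Cref{assn:abs-true-bounds} together with the corresponding error bounds from \Cref{assn:var-bounds}. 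One point needs care: the a.s. statements in \Cref{assn:var-bounds} are uniform in $x$, meaning a single null set works for all $x\in\cX$. I read the assumption this way, so the derived bounds also hold on one full-measure event simultaneously for every $x$. That uniformity is exactly what the Lipschitz step below requires.

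For the Lipschitz claim, I will work on that full-measure event and fix $j$ and $x,y\in\cX$. Since $\cX$ is convex, the segment $z_t = y + t(x-y)$, $t\in[0,1]$, lies in $\cX$. I take $C(\cdot,\zeta)$ to be differentiable on $\cX$, which is implicit in \Cref{assn:var-bounds}, where $\nabla C(x,\zeta)$ appears. Then
\begin{align*}
C^{(j)}(x,\zeta) - C^{(j)}(y,\zeta) = \int_0^1 \nabla C^{(j)}(z_t,\zeta)^\top (x-y)\, dt .
\end{align*}
Each row satisfies $\|\nabla C^{(j)}(z,\zeta)\|\le \|\nabla C(z,\zeta)\|$, since the norm of a row is at most the induced $\ell_2$ norm of the Jacobian. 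Combined with the bound from the first step, this gives $|C^{(j)}(x,\zeta)-C^{(j)}(y,\zeta)| \le (\kappa_{\nabla c}+\sigma_{\nabla c})\|x-y\| = L_c\|x-y\|$.

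I do not expect a substantive obstacle. The step needing most care is the measure-theoretic one: the a.s. bound on $\nabla C$ must hold at every point of the segment simultaneously. The uniform-in-$x$ reading of \Cref{assn:var-bounds} guarantees this. The only other subtlety is the boundary of $\cX$. Differentiability there is understood in the sense of \Cref{assn:abs-true-bounds}, with functions defined on a neighborhood, or else one integrates along the segment using one-sided derivatives, and the same bound holds.
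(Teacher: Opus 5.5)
Your proposal is correct and matches the paper's proof: the triangle inequality combining Assumption~\ref{assn:abs-true-bounds} with the almost-sure error bounds of Assumption~\ref{assn:var-bounds}, followed by a mean-value argument along the segment in the convex set $\cX$. You use the integral form where the paper uses the pointwise mean value theorem, which is only a cosmetic difference. You also make explicit two points the paper leaves implicit: that a single full-measure event serves for all $x$, and that the row norm $\|\nabla C^{(j)}\|$ is bounded by $\|\nabla C\|$.
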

The proof of this lemma is provided in \Cref{sec:appendix_lemm:lip-c}.

\begin{remark}
    Assumptions~\ref{assn:lip-nabla-f-nabla-c}, \ref{assn:lip-nabla2-f-nabla2-c}, and \ref{assn:var-bounds} are used to establish that the stochastic augmented Lagrangian {gradient and Hessian estimators} are Lipschitz continuous and that their errors relative to the corresponding exact quantities are uniformly bounded, as shown in the following technical lemmas. We note that these are relatively strong properties and may not be needed by all solvers that can be employed to solve the primal subproblem. However, solvers that employ variance reduction techniques typically require these properties to hold for the stochastic estimators themselves {\cite{arjevani_lower_2023}}, rather than only for the corresponding expected quantities. Moreover, the almost sure error bounds in Assumption~\ref{assn:var-bounds}, or the almost sure boundedness of the stochastic quantities established in~\Cref{lemm:lip-c}, are commonly utilized in constrained stochastic optimization; see, for example, \cite{curtis_stochasticgradientbased_2025,yu_online_2017}. To provide a unified presentation across different potential solvers, including those considered in \Cref{sec:active_solvers}, which operate under varying degrees of strictness in their requirements on the augmented Lagrangian properties, we adopt these assumptions. 
    {F}or some solvers, it may suffice to assume Lipschitz continuity of the true augmented Lagrangian gradients and Hessians and boundedness of the errors in expectation. In such cases, the assumptions above can be equivalently imposed on the corresponding expected quantities, with the error bounds required only in expectation.

\end{remark}

Next, we provide technical lemmas establishing the properties of the stochastic augmented Lagrangian {gradient and Hessian estimators.}

\begin{lemma} \label{lemm:2-lipschitz-grad-aug-lag}
    Suppose Assumptions \ref{assn:abs-true-bounds}, \ref{assn:lip-nabla-f-nabla-c}, and \ref{assn:var-bounds} hold. At every iteration $k \in \NN_0$ of \Cref{alg:mescal-overview}, suppose that the dual variable satisfies $\| \lambda_k \| \leq \Lambda < \infty$. Let
    \begin{align*}
        L^{(g)}_1 := L_{\nabla f} + \sqrt{m} L_{\nabla c} \Lambda, \text{ and } 
        L^{(g)}_2 := m (L_c (\sigma_{\nabla c} + \kappa_{\nabla c}) + L_{\nabla c} (\sigma_{c} + \kappa_{c})).
    \end{align*}
    Then, at each iteration $k \in \NN_0$, the function $\nabla \bar{\cL}(x, \lambda_k; \theta)$ is $L^{(g)}_{\cL, k}$-Lipschitz continuous on $\cX$ $\cP_\theta$-almost surely. That is,
    \begin{align*}
        \| \nabla \bar{\cL}_{\alpha_k} (x, \lambda_k; \theta) - \nabla \bar{\cL}_{\alpha_k} (y, \lambda_k; \theta) \| \leq L^{(g)}_{\cL, k} \|x - y\|, \quad \forall k \in \NN_0, \quad \forall x, y \in \cX, \quad \cP_\theta\text{-a.s.}.,
    \end{align*}
    where $L^{(g)}_{\cL, k} \leq L^{(g)}_1 + \alpha_k L^{(g)}_2$.
\end{lemma}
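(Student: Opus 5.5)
We will bound the difference of the estimator \eqref{eq:sample_gradient_estimator} at $x$ and $y$ by splitting it into three groups of terms and handling each with the almost-sure bounds already available. Fix $k$ and a realization $\theta=(\xi,\zetaone,\zetatwo)$ outside a null set, so that Assumptions~\ref{assn:lip-nabla-f-nabla-c} and \ref{assn:var-bounds} and the conclusions of \Cref{lemm:lip-c} hold simultaneously for $\xi$, $\zetaone$ and $\zetatwo$; this is a countable intersection of full-measure events. We then write
\begin{align*}
\nabla \bar{\cL}_{\alpha_k}(x,\lambda_k;\theta)-\nabla \bar{\cL}_{\alpha_k}(y,\lambda_k;\theta)
&= \big(\nabla F(x,\xi)-\nabla F(y,\xi)\big)
+ \big(\nabla C(x,\zetaone)-\nabla C(y,\zetaone)\big)^\top \lambda_k \\
&\quad + \alpha_k\big(\nabla C(x,\zetaone)^\top C(x,\zetatwo)-\nabla C(y,\zetaone)^\top C(y,\zetatwo)\big),
\end{align*}
and bound the three terms separately.

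The first term is at most $L_{\nabla f}\|x-y\|$ by \Cref{assn:lip-nabla-f-nabla-c}. For the second, we write $\nabla C(\cdot,\zetaone)^\top\lambda_k=\sum_j \lambda_k^{(j)}\nabla C^{(j)}(\cdot,\zetaone)$. The triangle inequality, the componentwise Lipschitz bound $L_{\nabla c}$, and $\sum_j|\lambda_k^{(j)}|\le\sqrt m\|\lambda_k\|\le\sqrt m\Lambda$ then give $\sqrt m L_{\nabla c}\Lambda\|x-y\|$. Together these two terms produce $L^{(g)}_1$.

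For the $\alpha_k$ term, we expand it componentwise as $\sum_{j=1}^m\big(C^{(j)}(x,\zetatwo)\nabla C^{(j)}(x,\zetaone)-C^{(j)}(y,\zetatwo)\nabla C^{(j)}(y,\zetaone)\big)$ and use the standard add-and-subtract for products:
\[
\big(C^{(j)}(x,\zetatwo)-C^{(j)}(y,\zetatwo)\big)\nabla C^{(j)}(x,\zetaone)+C^{(j)}(y,\zetatwo)\big(\nabla C^{(j)}(x,\zetaone)-\nabla C^{(j)}(y,\zetaone)\big).
\]
The first piece is bounded by $L_c\|x-y\|\,\|\nabla C^{(j)}(x,\zetaone)\|$, using the Lipschitz continuity of $C^{(j)}$ from \Cref{lemm:lip-c}. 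We then bound $\|\nabla C^{(j)}\|\le\|\nabla C\|\le\kappa_{\nabla c}+\sigma_{\nabla c}$, since a row norm is dominated by the operator norm. The second piece is bounded by $|C^{(j)}(y,\zetatwo)|\,L_{\nabla c}\|x-y\|$ with $|C^{(j)}|\le\|C\|\le\kappa_c+\sigma_c$. Summing over the $m$ components gives $m\big(L_c(\kappa_{\nabla c}+\sigma_{\nabla c})+L_{\nabla c}(\kappa_c+\sigma_c)\big)\|x-y\|=L^{(g)}_2\|x-y\|$. Multiplying by $\alpha_k$ and adding the first two bounds yields the Lipschitz constant $L^{(g)}_1+\alpha_kL^{(g)}_2$. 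Any smallest valid constant $L^{(g)}_{\cL,k}$ is therefore at most this value.

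The argument is routine; the main thing to handle carefully is the almost-sure bookkeeping and the norm conversions. Because $\zetaone$ and $\zetatwo$ enter different factors, we need the bounds of \Cref{lemm:lip-c} and Assumption~\ref{assn:lip-nabla-f-nabla-c} for each of them on a common full-measure set under the product measure $\cP_\theta$. We also need to check that the componentwise bounds on rows of $\nabla C$ and entries of $C$ are legitimately controlled by the operator or Euclidean norms, which is what produces the factors $m$ and $\sqrt m$.
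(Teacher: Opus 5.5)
Your proposal is correct and follows essentially the same route as the paper. Both use the same three-term split, the bound $\|\lambda_k\|_1\le\sqrt{m}\Lambda$ for the multiplier term, and the same add-and-subtract on the product term, with componentwise bounds from \Cref{lemm:lip-c} producing the factor $m$ in $L^{(g)}_2$ (your explicit almost-sure bookkeeping is a bit more careful than the paper's, but changes nothing).
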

\begin{proof}
    Consider the difference between stochastic gradient estimators of augmented Lagrangian function defined in \eqref{eq:sample_gradient_estimator} at $x,y \in \cX$,
    \begin{align}
        \| \nabla \bar{\cL}_{\alpha_k} (x, \lambda_k; \theta) &- \nabla \bar{\cL}_{\alpha_k} (y, \lambda_k; \theta) \| \nonumber \\
        &= \Big\| (\nabla F(x, \xi) - \nabla F(y, \xi)) + \ssum_{j=1}^m \lambda^{(j)}_k  (\nabla C^{(j)}(x, \zetaone) - \nabla C^{(j)}(y, \zetaone))  \nonumber \\
        & \qquad + \alpha_k (\nabla C(x, \zetaone)^\top C(x, \zetatwo) - \nabla C(y, \zetaone)^\top C(y, \zetatwo))\Big\|  \nonumber \\
        &\leq \| \nabla F(x, \xi) - \nabla F(y, \xi) \| + \ssum_{j=1}^m |\lambda^{(j)}_k| \| \nabla C^{(j)}(x, \zetaone) - \nabla C^{(j)}(y, \zetaone) \|  \nonumber \\
        & \qquad + \alpha_k \| \nabla C(x, \zetaone)^\top C(x, \zetatwo) - \nabla C(y, \zetaone)^\top C(y, \zetatwo) \| \nonumber \\
        &\leq (L_{\nabla f} + \sqrt{m} L_{\nabla c} \Lambda) \|x-y\| + \alpha_k \| \nabla C(x, \zetaone)^\top C(x, \zetatwo) - \nabla C(y, \zetaone)^\top C(y, \zetatwo) \|,  \label{eq:lem33_proof_1} 
    \end{align}
where the last inequality follows from \Cref{assn:lip-nabla-f-nabla-c} and $\|\lambda_k\|_1 \leq \sqrt{m}\|\lambda_k\| \leq \sqrt{m} \Lambda$. Now, consider, 
    \begin{align}
        \| (\nabla C(x, \zetaone)^\top C(x, \zetatwo) &- \nabla C(y, \zetaone)^\top C(y, \zetatwo))\| \nonumber\\
        &= \| \nabla C(x, \zetaone)^\top (C(x, \zetatwo) - C(y, \zetatwo)) + (\nabla C(x, \zetaone) - \nabla C(y, \zetaone))^\top C(y, \zetatwo)  \| \nonumber \\
        &\leq \ssum_{j=1}^m \lt\| \nabla C^{(j)}(x, \zetaone) \rt\| \lt\| C^{(j)}(x, \zetatwo) - C^{(j)}(y, \zetatwo) \rt\| \nonumber\\
        & \qquad + \ssum_{j=1}^m \lt\| \nabla C^{(j)}(x, \zetaone) - \nabla C^{(j)}(y, \zetaone)\rt\| \lt\| C^{(j)}(y, \zetatwo) \rt\| \nonumber \\
        &\leq m (L_c (\sigma_{\nabla c} + \kappa_{\nabla c}) + L_{\nabla c} (\sigma_{c} + \kappa_{c})) \lt\|x-y\rt\|, \label{eq:lem33_proof_2} 
    \end{align}
where the last inequality follows from Lemma~\ref{lemm:lip-c}. Substituting \eqref{eq:lem33_proof_2} in \eqref{eq:lem33_proof_1} completes the proof.     

\end{proof}

\begin{lemma} \label{lemm:3-lipschitz-hess-aug-lag}
    Suppose Assumptions \ref{assn:abs-true-bounds}, \ref{assn:lip-nabla-f-nabla-c}, \ref{assn:lip-nabla2-f-nabla2-c}, and \ref{assn:var-bounds} hold. At every iteration $k \in \NN_0$ of \Cref{alg:mescal-overview}, suppose that the dual variable satisfies $\| \lambda_k \| \leq \Lambda < \infty$. Let %
    \begin{align*}
        L^{(H)}_1 &:= L_{\nabla^2 f} + \sqrt{m} L_{\nabla^2 c} \Lambda, \text{ and } \\
        L^{(H)}_2 &:= m \lt( 2 L_{\nabla c} (\sigma_{\nabla c} + \kappa_{\nabla c})  + L_c (\kappa_{\nabla^2 c} + \sigma_{\nabla^2 c}) + L_{\nabla^2 c} (\sigma_{c} + \kappa_{c}) \rt).
    \end{align*}
    Then, at each iteration $k \in \NN_0$, the function $\nabla^2 \bar{\cL}(x, \lambda_k; \theta)$ is $L^{(H)}_{\cL, k}$-Lipschitz continuous on $\cX$ $\cP_\theta$-almost surely. That is,
    \begin{align*}
        \| \nabla^2 \bar{\cL}_{\alpha_k} (x, \lambda_k; \theta) - \nabla^2 \bar{\cL}_{\alpha_k} (y, \lambda_k; \theta) \| \leq L^{(H)}_{\cL, k} \|x - y\|, \quad \forall k \in \NN_0, \quad \forall x, y \in \cX, \quad \cP_\theta\text{-a.s.,}
    \end{align*}
    where $L^{(H)}_{\cL, k} \leq L^{(H)}_1 + \alpha_k L^{(H)}_2$.
\end{lemma}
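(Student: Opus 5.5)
We follow the template of the proof of \Cref{lemm:2-lipschitz-grad-aug-lag}. Fix $k$ and $x,y\in\cX$, and subtract the Hessian estimators \eqref{eq:sample_hessian_estimator} evaluated at $x$ and $y$. The difference splits into three groups:
\begin{enumerate}
\item the objective term $\nabla^2 F(x,\xi)-\nabla^2 F(y,\xi)$;
\item the dual term $\sum_{j}\lambda_k^{(j)}\bigl(\nabla^2 C^{(j)}(x,\zetaone)-\nabla^2 C^{(j)}(y,\zetaone)\bigr)$;
\item $\alpha_k$ times the penalty part
\[
\sum_j \bigl(C^{(j)}(x,\zetatwo)\nabla^2 C^{(j)}(x,\zetaone)-C^{(j)}(y,\zetatwo)\nabla^2 C^{(j)}(y,\zetaone)\bigr)+\bigl(\nabla C(x,\zetaone)^\top\nabla C(x,\zetatwo)-\nabla C(y,\zetaone)^\top\nabla C(y,\zetatwo)\bigr).
\]
\end{enumerate}
Applying the triangle inequality across these groups, \Cref{assn:lip-nabla2-f-nabla2-c} bounds the first two by $(L_{\nabla^2 f}+\sqrt m L_{\nabla^2 c}\Lambda)\|x-y\|$. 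This uses $\|\lambda_k\|_1\le\sqrt m\|\lambda_k\|\le\sqrt m\Lambda$, exactly as in \eqref{eq:lem33_proof_1}, and yields $L^{(H)}_1$.

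For the penalty part, we use the add-and-subtract trick on each product. For the $j$-th summand of the first sum, write it as
\[
\bigl(C^{(j)}(x,\zetatwo)-C^{(j)}(y,\zetatwo)\bigr)\nabla^2C^{(j)}(x,\zetaone)+C^{(j)}(y,\zetatwo)\bigl(\nabla^2C^{(j)}(x,\zetaone)-\nabla^2C^{(j)}(y,\zetaone)\bigr).
\]
\Cref{lemm:lip-c} gives $|C^{(j)}(x,\zetatwo)-C^{(j)}(y,\zetatwo)|\le L_c\|x-y\|$, together with $\|\nabla^2 C^{(j)}\|\le\kappa_{\nabla^2 c}+\sigma_{\nabla^2 c}$ and $|C^{(j)}|\le\kappa_c+\sigma_c$. \Cref{assn:lip-nabla2-f-nabla2-c} handles the Hessian difference. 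Summing over $j$ gives $m\bigl(L_c(\kappa_{\nabla^2 c}+\sigma_{\nabla^2 c})+L_{\nabla^2 c}(\sigma_c+\kappa_c)\bigr)\|x-y\|$.

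For the Gauss–Newton-type term, write it as
\[
\sum_j\Bigl[\nabla C^{(j)}(x,\zetaone)\bigl(\nabla C^{(j)}(x,\zetatwo)-\nabla C^{(j)}(y,\zetatwo)\bigr)^\top+\bigl(\nabla C^{(j)}(x,\zetaone)-\nabla C^{(j)}(y,\zetaone)\bigr)\nabla C^{(j)}(y,\zetatwo)^\top\Bigr].
\]
Each rank-one outer product has norm equal to the product of the vector norms. Each gradient-difference factor is controlled by \Cref{assn:lip-nabla-f-nabla-c} through $L_{\nabla c}$. Each single-gradient factor is controlled by the bound $\|\nabla C(\cdot,\zeta)\|\le\kappa_{\nabla c}+\sigma_{\nabla c}$ from \Cref{lemm:lip-c}, which bounds every row. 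This produces $2mL_{\nabla c}(\sigma_{\nabla c}+\kappa_{\nabla c})\|x-y\|$. Adding the two contributions gives exactly $L^{(H)}_2$, and multiplying by $\alpha_k$ and combining with the first part yields $L^{(H)}_{\cL,k}\le L^{(H)}_1+\alpha_kL^{(H)}_2$.

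All the estimates hold for every realization outside a null set of $\theta$. That null set is the union of the null sets in the assumptions and in \Cref{lemm:lip-c}, so the bound holds $\cP_\theta$-almost surely. The only delicate step is the bookkeeping in the penalty term: the two independent samples $\zetaone,\zetatwo$ must be paired correctly when adding and subtracting, so that each difference involves a single sample and falls under a single Lipschitz assumption. Once this decomposition is written out, the remaining steps are routine.
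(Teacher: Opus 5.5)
Your proposal is correct and follows the paper's proof essentially step for step. The paper also splits the difference by the triangle inequality into an objective/dual term, the gradient outer-product term, and the Hessian-times-constraint term. It bounds these with the same add-and-subtract decompositions, using \Cref{assn:lip-nabla-f-nabla-c}, \Cref{assn:lip-nabla2-f-nabla2-c}, \Cref{lemm:lip-c}, and $\|\lambda_k\|_1\le\sqrt{m}\Lambda$, which yields exactly $L^{(H)}_1+\alpha_k L^{(H)}_2$.
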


\begin{proof}
Consider the difference between stochastic Hessian estimators of augmented Lagrangian function defined in \eqref{eq:sample_hessian_estimator} at $x,y \in \cX$,
\begin{align}
    \| \nabla^2 \bar{\cL}_{\alpha_k} (x, \lambda_k; \theta) &- \nabla^2 \bar{\cL}_{\alpha_k} (y, \lambda_k; \theta) \| \nonumber \\
    &= \Big\| (\nabla^2 F(x, \xi) - \nabla^2 F(y, \xi)) + \sum_{j=1}^m \lambdaj_k (\nabla^2 C^{(j)} (x, \zetaone) - \nabla^2 C^{(j)} (y, \zetaone))
    \nonumber\\ 
    & \qquad + \alpha_k \sum_{j=1}^m  ( \nabla C^{(j)} (x, \zetaone) \nabla C^{(j)} (x, \zetatwo)^\top -  \nabla C^{(j)} (y, \zetaone) \nabla C^{(j)} (y, \zetatwo)^\top) 
    \nonumber\\ 
    & \qquad + \alpha_k \sum_{j=1}^m  ( \nabla^2 C^{(j)} (x, \zetaone) C^{(j)} (x, \zetatwo) -  \nabla^2 C^{(j)} (y, \zetaone) C^{(j)} (y, \zetatwo)) \Big\| \nonumber\\
    &\leq \underbrace{\| \nabla^2 F(x, \xi) - \nabla^2 F(y, \xi) \| + \sum_{j=1}^m |\lambda_k^{(j)}|\| (\nabla^2 C^{(j)} (x, \zetaone) - \nabla^2 C^{(j)} (y, \zetaone)) \|}_{T_1} \nonumber\\ 
    & \qquad + \alpha_k \sum_{j=1}^m\underbrace{ \|  ( \nabla C^{(j)} (x, \zetaone) \nabla C^{(j)} (x, \zetatwo)^\top -  \nabla C^{(j)} (y, \zetaone) \nabla C^{(j)} (y, \zetatwo)^\top)\|}_{T_2} \nonumber\\ 
    & \qquad + \alpha_k \sum_{j=1}^m\underbrace{ \|  ( \nabla^2 C^{(j)} (x, \zetaone) C^{(j)} (x, \zetatwo) -  \nabla^2 C^{(j)} (y, \zetaone) C^{(j)} (y, \zetatwo)) \|}_{T_3}. \label{eq:lem34_proof_0}
\end{align}

We now bound each of the norm terms $T_1$, $T_2$, and $T_3$ defined above separately. First, %
for $T_1$, %
by \Cref{assn:lip-nabla2-f-nabla2-c} and $\|\lambda_k\|_1 \leq \sqrt{m}\|\lambda_k\| \leq \sqrt{m} \Lambda$, we have,
\begin{align}
    T_1 &\leq L_{\nabla^2 f} \|x - y\| + \sum_{j=1}^m  |\lambda_k^{(j)} | L_{\nabla^2 c} \|x - y\| \leq (L_{\nabla^2 f} + \sqrt{m}L_{\nabla^2 c}\Lambda)\|x-y\|. \label{eq:lem34_proof_1}
\end{align}

Next, we bound $T_2$ for any $j \in \{1,\dots,m\}$ as follows:
\begin{align}
    \|\nabla C^{(j)} (x, \zetaone)& \nabla C^{(j)} (x, \zetatwo)^\top - \nabla C^{(j)} (y, \zetaone) \nabla C^{(j)} (y, \zetatwo)^\top\| \nonumber\\
    &= \| \nabla C^{(j)} (x, \zetaone) \nabla C^{(j)} (x, \zetatwo)^\top - \nabla C^{(j)} (x, \zetaone) \nabla C^{(j)} (y, \zetatwo)^\top \nonumber\\
    & \qquad + \nabla C^{(j)} (x, \zetaone) \nabla C^{(j)} (y, \zetatwo)^\top - \nabla C^{(j)} (y, \zetaone) \nabla C^{(j)} (y, \zetatwo)^\top\| \nonumber\\
    &\leq \| \nabla C^{(j)} (x, \zetaone) ( \nabla C^{(j)} (x, \zetatwo) - \nabla C^{(j)} (y, \zetatwo))^\top \| \nonumber\\
    & \qquad + \|( \nabla C^{(j)} (x, \zetaone) - \nabla C^{(j)} (y, \zetaone)) \nabla C^{(j)} (y, \zetatwo)^\top \| \nonumber\\
    &\leq \| \nabla C^{(j)} (x, \zetaone) \| \| \nabla C^{(j)} (x, \zetatwo) - \nabla C^{(j)} (y, \zetatwo) \| \nonumber\\ 
    & \qquad + \| \nabla C^{(j)} (x, \zetaone) - \nabla C^{(j)} (y, \zetaone) \| \| \nabla C^{(j)} (y, \zetatwo) \| \nonumber \\
    & \leq 2(\kappa_{\nabla c} + \sigma_{\nabla c}) L_{\nabla c} \| x - y \|, \label{eq:lem34_proof_2}
\end{align}
where the final inequality follows from \Cref{assn:lip-nabla-f-nabla-c} and \Cref{lemm:lip-c}.

Finally, we bound $T_3$ for any $j \in \{1,\dots,m\}$ as follows:
\begin{align}
    \|\nabla^2 C^{(j)} (x, \zetaone) & C^{(j)} (x, \zetatwo) - \nabla^2 C^{(j)} (y, \zetaone) C^{(j)} (y, \zetatwo)\| \nonumber\\
    &= \Big\| \nabla^2 C^{(j)} (x, \zetaone) C^{(j)} (x, \zetatwo) - \nabla^2 C^{(j)} (x, \zetaone) C^{(j)} (y, \zetatwo) \nonumber\\
    & \qquad + \nabla^2 C^{(j)} (x, \zetaone) C^{(j)} (y, \zetatwo) - \nabla^2 C^{(j)} (y, \zetaone) C^{(j)} (y, \zetatwo)\Big\| \nonumber\\
    &\leq \| \nabla^2 C^{(j)} (x, \zetaone) (C^{(j)} (x, \zetatwo) - C^{(j)} (y, \zetatwo)) \| \nonumber\\
    & \qquad + \| ( \nabla^2 C^{(j)} (x, \zetaone) - \nabla^2 C^{(j)} (y, \zetaone)) C^{(j)} (y, \zetatwo) \| \nonumber\\
    & \leq \| \nabla^2 C^{(j)}(x, \zetaone) \| | C^{(j)}(x, \zetatwo) - C^{(j)}(y, \zetatwo) | \nonumber\\
    & \qquad + \| \nabla^2 C^{(j)}(x, \zetaone) - \nabla^2 C^{(j)}(y, \zetaone) \| | C^{(j)}(y, \zetatwo)| \nonumber\\
    & \leq (L_c \lt( \sigma_{\nabla^2 c} + \kappa_{\nabla^2 c} \rt) +  L_{\nabla^2 c} \lt( \sigma_{c} + \kappa_{c} \rt)) \|x - y \|, \label{eq:lem34_proof_3} 
\end{align}
where the final inequality follows from \Cref{assn:lip-nabla2-f-nabla2-c} and \Cref{lemm:lip-c}. Substituting \eqref{eq:lem34_proof_1}, \eqref{eq:lem34_proof_2}, and \eqref{eq:lem34_proof_3} in \eqref{eq:lem34_proof_0} completes the proof. 

\end{proof}

\begin{lemma} \label{lemm:4-finite-bounded-variance}
    Suppose Assumptions \ref{assn:abs-true-bounds} and \ref{assn:var-bounds} hold. At every iteration $k \in \NN_0$ of \Cref{alg:mescal-overview}, suppose that the dual variable satisfies %
    $\| \lambda_k \| \leq \Lambda < \infty$. Let %
    \begin{align*}
        \sigma^{(g)}_1 := \sigma_{\nabla f} + \sigma_{\nabla c} \Lambda, 
        \quad \text{and} \quad
        \sigma^{(g)}_2 :=  \sigma_{\nabla c} \sigma_c + \sigma_{\nabla c}\kappa_c + \kappa_{\nabla c} \sigma_c.
    \end{align*}
    Then, at each iteration $k \in \NN_0$, %
    the stochastic gradient error of $\nabla \bar{\cL}_{\alpha_k} (x, \lambda_k; \theta) $ is uniformly bounded by $\sigma^{(g)}_{\cL, k}$ on $\cX$ $\cP_\theta$-almost surely. That is,
    \begin{align*}
        \| \nabla \bar{\cL}_{\alpha_k} (x, \lambda_k; \theta) - \nabla \cL_{\alpha_k} (x, \lambda_k) \| \leq \sigma^{(g)}_{\cL, k}, \quad \forall k \in \NN_0, \quad \forall x \in \cX, \quad \cP_\theta\text{-a.s.,}
    \end{align*}
    where $\sigma^{(g)}_{\cL, k} \leq \sigma^{(g)}_1 + \alpha_k \sigma^{(g)}_2$.
\end{lemma}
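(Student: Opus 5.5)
The plan is to write the error as a sum of three differences and bound each with \Cref{assn:abs-true-bounds}, \Cref{assn:var-bounds} and $\|\lambda_k\|\le\Lambda$. By \eqref{eq:sample_gradient_estimator} and the definition of $\cL_{\alpha_k}$, for any $x\in\cX$,
\begin{align*}
\nabla \bar{\cL}_{\alpha_k}(x,\lambda_k;\theta) - \nabla \cL_{\alpha_k}(x,\lambda_k)
&= \big(\nabla F(x,\xi)-\nabla f(x)\big) + \big(\nabla C(x,\zetaone)-\nabla c(x)\big)^\top \lambda_k \\
&\qquad + \alpha_k\big(\nabla C(x,\zetaone)^\top C(x,\zetatwo) - \nabla c(x)^\top c(x)\big).
\end{align*}
We then apply the triangle inequality to split the norm into these three pieces.

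The first piece is at most $\sigma_{\nabla f}$ almost surely by \Cref{assn:var-bounds}. For the second, submultiplicativity of the induced norm gives the bound $\|\nabla C(x,\zetaone)-\nabla c(x)\|\,\|\lambda_k\|\le \sigma_{\nabla c}\Lambda$. Together these yield $\sigma^{(g)}_1$.

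For the penalty piece, write $\Delta_J:=\nabla C(x,\zetaone)-\nabla c(x)$ and $\Delta_c:=C(x,\zetatwo)-c(x)$. Expanding the bilinear product gives
\[
\nabla C(x,\zetaone)^\top C(x,\zetatwo) - \nabla c(x)^\top c(x) = \Delta_J^\top \Delta_c + \Delta_J^\top c(x) + \nabla c(x)^\top \Delta_c .
\]
Each term is bounded by a product of norms. We use $\|\Delta_J\|\le\sigma_{\nabla c}$ and $\|\Delta_c\|\le\sigma_c$ from \Cref{assn:var-bounds}, and $\|c(x)\|\le\kappa_c$ and $\|\nabla c(x)\|\le\kappa_{\nabla c}$ from \Cref{assn:abs-true-bounds}. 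This gives $\sigma_{\nabla c}\sigma_c+\sigma_{\nabla c}\kappa_c+\kappa_{\nabla c}\sigma_c=\sigma^{(g)}_2$, and multiplying by $\alpha_k$ finishes the bound.

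All bounds hold for every $x\in\cX$ on the intersection of the almost-sure events for $\xi$, $\zetaone$ and $\zetatwo$. That intersection has full $\cP_\theta$-measure because $\cP_\theta$ is a product measure. No step looks genuinely hard. The only point needing care is choosing the three-term decomposition so that the cross term involves only deviations and true quantities. A cruder split, such as bounding $\nabla C(x,\zetaone)$ by $\kappa_{\nabla c}+\sigma_{\nabla c}$ via \Cref{lemm:lip-c}, would give a constant that differs from the stated $\sigma^{(g)}_2$.
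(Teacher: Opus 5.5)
Your proof is correct and is essentially the paper's argument. The paper merges your first two penalty terms into $(\nabla C(x,\zetaone)-\nabla c(x))^\top C(x,\zetatwo)$ and bounds $\|C(x,\zetatwo)\|\le\kappa_c+\sigma_c$ via \Cref{lemm:lip-c}, which yields exactly $\sigma_{\nabla c}(\sigma_c+\kappa_c)+\kappa_{\nabla c}\sigma_c=\sigma^{(g)}_2$. For the same reason your closing caveat is mistaken: the two-term split $\nabla C(x,\zetaone)^\top(C(x,\zetatwo)-c(x))+(\nabla C(x,\zetaone)-\nabla c(x))^\top c(x)$, with $\|\nabla C(x,\zetaone)\|\le\kappa_{\nabla c}+\sigma_{\nabla c}$ from \Cref{lemm:lip-c}, also gives exactly $\sigma^{(g)}_2$, so no special care in choosing the decomposition is needed.
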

\begin{proof}
    For each iteration $k \in \NN_0$, using \Cref{assn:abs-true-bounds}, \Cref{assn:var-bounds}, \Cref{lemm:lip-c}, and $\|\lambda_k\| \leq \Lambda$, we get,
    \begin{align*}
        & \| \nabla \bar{\cL}_{\alpha_k}(x, \lambda_k; \theta) - \nabla \cL_{\alpha_k}(x, \lambda_k) \| \\ 
        & \quad = \| \nabla F(x, \xi) - f(x) + (\nabla C(x, \zetaone) - \nabla c(x))^\top \lambda_k  + \alpha_k (\nabla C(x, \zetaone)^\top C(x, \zetatwo) - \nabla c(x)^\top c(x)) \| \\
        & \quad \leq \| \nabla F(x, \xi) - f(x) \| + \| \nabla C(x, \zetaone) - \nabla c(x) \| \| \lambda_k \|  + \alpha_k \| \nabla C(x, \zetaone)^\top C(x, \zetatwo) - \nabla c(x)^\top c(x) \| \\
        & \quad \leq \sigma_{\nabla f} + \sigma_{\nabla c} \Lambda + \alpha_k \| (\nabla C(x, \zetaone) - \nabla c(x))^\top C(x, \zetatwo) + \nabla c(x)^\top (C(x, \zetatwo) - c(x)) \| \\
        & \quad \leq \sigma_{\nabla f} + \sigma_{\nabla c} \Lambda + \alpha_k ( \| \nabla C(x, \zetaone) - \nabla c(x) \| \| C(x, \zetatwo) \| + \| \nabla c(x) \| \| C(x, \zetatwo) - c(x) \|) \\
        & \quad \leq \sigma_{\nabla f} + \sigma_{\nabla c} \Lambda + \alpha_k ( \sigma_{\nabla c} (\sigma_c + \kappa_c) + \kappa_{\nabla c} \sigma_c).
    \end{align*}
\end{proof}

Finally, we state an existing result on the error bounds of stochastic Hessian estimators, provided that the stochastic gradient estimators are Lipschitz continuous almost surely.

\begin{lemma}[Page 4, \cite{tripuraneni_stochastic_2018}; \cite{tropp_introduction_2015}] \label{lemm:5-finite-hess-variance}
    Suppose Assumptions \ref{assn:abs-true-bounds}, \ref{assn:lip-nabla-f-nabla-c}, and \ref{assn:var-bounds} hold. At every iteration $k \in \NN_0$ of \Cref{alg:mescal-overview}, suppose that the dual variable satisfies $\| \lambda_k \| \leq \Lambda < \infty$. Let
    \begin{align*}
        & L^{(g)}_1 := L_{\nabla f} + m L_{\nabla c} \Lambda, \:
        L^{(g)}_2 := m (L_{\nabla c} (\sigma_{c} + \kappa_{c}) + L_c (\sigma_{\nabla c} + \kappa_{\nabla c})), \\
        & \sigma^{(H)}_1 := 2 L^{(g)}_1, \: \text{ and } \:
        \sigma^{(H)}_2 := 2 L^{(g)}_2.
    \end{align*}
    Then, at each iteration $k \in \NN_0$, the stochastic Hessian error of $\nabla^2 \bar{\cL}_{\alpha_k} (x, \lambda_k; \theta) $ is uniformly bounded by $\sigma^{(H)}_{\cL, k}$ on $\cX$ $\cP_\theta$-almost surely. That is,
    \begin{align*}
        \| \nabla^2 \bar{\cL}_{\alpha_k} (x, \lambda_k; \theta) - \nabla^2 \cL_{\alpha_k} (x, \lambda_k) \| \leq \sigma^{(H)}_{\cL, k}, \quad \forall k \in \NN_0, \quad \forall x \in \cX, \quad \cP_\theta\text{-a.s.,}
    \end{align*}
    where $\sigma^{(H)}_{\cL, k} \leq \sigma^{(H)}_1 + \alpha_k \sigma^{(H)}_2$.
\end{lemma}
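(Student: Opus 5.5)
The plan is to reduce the Hessian error bound to the almost-sure Lipschitz continuity of the stochastic gradient estimator from \Cref{lemm:2-lipschitz-grad-aug-lag}. The argument rests on two facts. First, the Hessian estimator in \eqref{eq:sample_hessian_estimator} is exactly the Jacobian, with respect to $x$, of the gradient estimator in \eqref{eq:sample_gradient_estimator}. Second, the norm of the Jacobian of an $L$-Lipschitz map is at most $L$. Combining these with unbiasedness and the triangle inequality gives the bound $2L$.

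\textbf{Step 1 (Jacobian identity).} Fix $k$ and $\theta$ and differentiate $x\mapsto \nabla F(x,\xi)+\nabla C(x,\zetaone)^\top(\lambda_k+\alpha_k C(x,\zetatwo))$ by the product rule. This gives
\[
\nabla^2F(x,\xi)+\ssum_{j}(\lambda_k^{(j)}+\alpha_k C^{(j)}(x,\zetatwo))\nabla^2C^{(j)}(x,\zetaone)+\alpha_k\nabla C(x,\zetaone)^\top\nabla C(x,\zetatwo),
\]
which is precisely $\nabla^2\bar\cL_{\alpha_k}(x,\lambda_k;\theta)$.

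\textbf{Step 2 (norm of the stochastic Hessian).} Take any $x\in\cX$ and any direction $v$ such that $x+tv\in\cX$ for small $t>0$. The difference quotient $\tfrac1t(\nabla\bar\cL_{\alpha_k}(x+tv,\lambda_k;\theta)-\nabla\bar\cL_{\alpha_k}(x,\lambda_k;\theta))$ has norm at most $L^{(g)}_{\cL,k}\|v\|$ $\cP_\theta$-a.s. by \Cref{lemm:2-lipschitz-grad-aug-lag}. Letting $t\downarrow0$ shows $\|\nabla^2\bar\cL_{\alpha_k}(x,\lambda_k;\theta)v\|\le L^{(g)}_{\cL,k}\|v\|$.
\begin{itemize}
\item At interior points every direction $v$ is admissible, so the operator-norm bound $\|\nabla^2\bar\cL_{\alpha_k}(x,\lambda_k;\theta)\|\le L^{(g)}_{\cL,k}$ follows immediately.
\item At boundary points I would extend the bound by continuity of the second derivatives, approximating $x$ by interior points.
\item If $\cX$ has empty interior, I would instead use the smoothness of $F$ and $C$ on a neighborhood, which the paper implicitly assumes since it defines Hessians on $\cX$.
\end{itemize}
I expect this boundary and domain technicality to be the only delicate step. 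The intended reading is the standard one cited from \cite{tripuraneni_stochastic_2018}.

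\textbf{Step 3 (true Hessian and conclusion).} The estimator is unbiased, $\EE_\theta[\nabla^2\bar\cL_{\alpha_k}]=\nabla^2\cL_{\alpha_k}$, so Jensen's inequality for the convex operator norm gives $\|\nabla^2\cL_{\alpha_k}(x,\lambda_k)\|\le L^{(g)}_{\cL,k}$. The triangle inequality then yields
\[
\|\nabla^2\bar\cL_{\alpha_k}-\nabla^2\cL_{\alpha_k}\|\le 2L^{(g)}_{\cL,k}.
\]
By \Cref{lemm:2-lipschitz-grad-aug-lag}, $2L^{(g)}_{\cL,k}\le 2(L_{\nabla f}+\sqrt m L_{\nabla c}\Lambda)+2\alpha_k L^{(g)}_2$. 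Since $\sqrt m\le m$, this is at most $\sigma^{(H)}_1+\alpha_k\sigma^{(H)}_2$ with the constants as stated, which completes the proof.
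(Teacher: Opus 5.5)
Your argument is correct and is the one the paper relies on. The paper gives no proof of its own; it only cites Tripuraneni et al.\ \cite{tripuraneni_stochastic_2018}, where $\ell$-Lipschitz stochastic gradients give a stochastic Hessian error of at most $2\ell$, which is why the statement sets $\sigma^{(H)}_i = 2L^{(g)}_i$. Your Jacobian identity, the Jensen step for the true Hessian, and the use of $\sqrt{m}\le m$ to match the constant to \Cref{lemm:2-lipschitz-grad-aug-lag} fill in exactly what that citation leaves implicit.
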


\begin{remark}
    We note that each of the terms ${L^{(g)}_{\cL,k}}$, ${L^{(H)}_{\cL,k}}$, ${\sigma^{(g)}_{\cL,k}}$, and ${\sigma^{(H)}_{\cL,k}}$ defined in \cref{lemm:2-lipschitz-grad-aug-lag,lemm:3-lipschitz-hess-aug-lag,lemm:4-finite-bounded-variance,lemm:5-finite-hess-variance} is bounded above by an expression of the form $c_1+\alpha_k c_2$ for all $k \in \NN_0$, where $c_1$ and $c_2$ are constants independent of $k$. Since ${\alpha_k}$ is an increasing sequence, these upper bounds are also increasing in $k$. Consequently, the quantities governing the sample complexity of the primal solver $\cM$ can be bounded in terms of the penalty parameter $\alpha_k$.
\end{remark}
We now establish bounds on the initial optimality gap of the primal subproblems at each iteration $k$.
\begin{lemma} \label{lemm:1-lower-bound-finite}
    Suppose \Cref{assn:abs-true-bounds} holds. At every iteration $k \in \NN_0$ of \Cref{alg:mescal-overview}, suppose that the dual variable satisfies $\| \lambda_k \| \leq \Lambda < \infty$ and $\epsilong_k < 1$. For all $k \in \NN_0$, let $x_k^* \in \arg \min_{x \in \cX} \cL_{\alpha_k} (x, \lambda_k)$, and define
        $$\Delta_k := \cL_{\alpha_k}(x_{k-1}, \lambda_k) - \cL_{\alpha_k}(x_k^*, \lambda_k).$$
        
    Further, define, 
        $$\Delta^*_k := 2 \kappa_{f} + \tfrac{\Lambda^2}{\alpha_0} + \alpha_k \kappa_{c}^2, \quad \text{and} \quad \Delta^* := 2 \kappa_{f} + \tfrac{\Lambda^2}{\alpha_0} + \tfrac{\kappa_c \beta(1 + \kappa_{\nabla c} + \Lambda \kappa_{\nabla f})}{C_{\text{reg}}}.$$
        
    Then, for all $k \in \NN_0$, 
    \begin{equation}\label{eq:det_delta_bnd}
        \Delta_k \leq \Delta^*_k = {\cO(\alpha_k)}.
    \end{equation}
    
    Further suppose \Cref{assn:reg-c-nabla-c} holds. Then, for \Cref{alg:mescal-overview}
    \begin{enumerate}[label=(\roman*)]
        \item under \textbf{Option I}, %
        for all $k \geq 1$,
        \begin{align}\label{eq:exp_delta_bnd}
            \EE[\Delta_k 
            ] \leq \Delta^*.
        \end{align}
        
        \item under \textbf{Option II}, %
        for all $k \geq 1$,
        \begin{align}\label{eq:prob_delta_bnd}
            \PP(\Delta_k \leq \Delta^* 
            \mid \cF_{k-1}
            ) \geq p_{k-1}.
        \end{align}
        Furthermore, for any $K\geq 1$, 
        \begin{align}\label{eq:delta-prob-overall-bound_init}
            \PP(\cap_{k=1}^{K}\{ \Delta_k \leq \Delta^* \}
            ) \geq 
            {\prod_{k=0}^{K-1}p_k. }
        \end{align}
    \end{enumerate}
\end{lemma}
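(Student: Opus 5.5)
The plan is to bound $\Delta_k$ by splitting it into an objective part and a constraint part, and to handle the constraint part in two ways: crudely via \Cref{assn:abs-true-bounds} for \eqref{eq:det_delta_bnd}, and via the feasibility bound \eqref{eq:feas-equiv} from the proof of \Cref{thm:outer-iteration-complexity} for parts (i) and (ii).

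\textbf{Deterministic bound.} Write
\begin{align*}
\Delta_k = \big(f(x_{k-1}) - f(x_k^*)\big) + \Big(\lambda_k^\top c(x_{k-1}) + \tfrac{\alpha_k}{2}\|c(x_{k-1})\|^2\Big) - \Big(\lambda_k^\top c(x_k^*) + \tfrac{\alpha_k}{2}\|c(x_k^*)\|^2\Big).
\end{align*}
The first bracket is at most $2\kappa_f$. For the last bracket, complete the square: $-\lambda_k^\top c - \tfrac{\alpha_k}{2}\|c\|^2 \le \tfrac{\|\lambda_k\|^2}{2\alpha_k}$. For the middle bracket, Young's inequality gives $\lambda_k^\top c \le \tfrac{\|\lambda_k\|^2}{2\alpha_k} + \tfrac{\alpha_k}{2}\|c\|^2$. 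Combining these with $\|\lambda_k\|\le\Lambda$ and $\alpha_k\ge\alpha_0$ yields
\begin{align*}
\Delta_k \le 2\kappa_f + \tfrac{\Lambda^2}{\alpha_0} + \alpha_k\|c(x_{k-1})\|^2 .
\end{align*}
Using $\|c\|\le\kappa_c$ then gives $\Delta_k\le\Delta_k^*$.

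\textbf{Sharper bound for $k\ge1$.} Keep the intermediate inequality above and write $\alpha_k\|c(x_{k-1})\|^2 \le \alpha_k\kappa_c\|c(x_{k-1})\|$. Apply \eqref{eq:feas-equiv} at index $k-1$, which uses \Cref{assn:reg-c-nabla-c}, and $\alpha_k=\beta\alpha_{k-1}$ to get
\begin{align*}
\alpha_k\|c(x_{k-1})\|^2 \le \tfrac{\kappa_c\beta}{C_{\text{reg}}}\Big(\|\mathrm{proj}_{T_\cX(x_{k-1})}(-\nabla\cL_{\alpha_{k-1}}(x_{k-1},\lambda_{k-1}))\| + \kappa_{\nabla f} + \kappa_{\nabla c}\Lambda\Big).
\end{align*}
Whenever the projected-gradient norm is at most $1$, this is bounded by the constant term of $\Delta^*$. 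The constants there appear in the order written in the statement; I read them as $\kappa_{\nabla f}+\kappa_{\nabla c}\Lambda$, matching \eqref{eq:iter_comp_log}.

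\textbf{Part (i).} Take total expectations and use the tower property with the first condition in \eqref{cond:exp_primal_prob}. This bounds the expected projected-gradient norm by $\epsilong_{k-1}<1$, which gives \eqref{eq:exp_delta_bnd}.

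\textbf{Part (ii), conditional bound.} Let $E_{1,k-1}$ be the event (as in the proof of \Cref{thm:outer-iteration-complexity}) that the projected-gradient norm at $x_{k-1}$ is at most $\epsilong_{k-1}<1$. The argument above shows $E_{1,k-1}\subseteq\{\Delta_k\le\Delta^*\}$ deterministically. Condition \eqref{cond:prob_primal_prob} then gives $\PP(\Delta_k\le\Delta^*\mid\cF_{k-1})\ge\PP(E_{1,k-1}\mid\cF_{k-1})\ge p_{k-1}$, which is \eqref{eq:prob_delta_bnd}.

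\textbf{Part (ii), product bound.} This is the main obstacle, because of the measurability bookkeeping: $\Delta_k$ depends on $\lambda_k$, which involves the dual sampling drawn after $x_{k-1}$. I would therefore not chain the events $\{\Delta_k\le\Delta^*\}$ directly. Instead, use the inclusion $\cap_{k=1}^K\{\Delta_k\le\Delta^*\}\supseteq\cap_{j=0}^{K-1}E_{1,j}$ and prove $\PP(\cap_{j=0}^{K-1}E_{1,j})\ge\prod_{j=0}^{K-1}p_j$ by induction on $K$. For the inductive step, the events $E_{1,j}$ with $j\le K-2$ are determined by randomness generated before $(x_{K-2},\lambda_{K-1})$ are fixed, so they are $\cF_{K-1}$-measurable. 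Hence
\begin{align*}
\PP\Big(\cap_{j=0}^{K-1}E_{1,j}\Big)=\EE\Big[\mathbf{1}_{\cap_{j\le K-2}E_{1,j}}\,\PP(E_{1,K-1}\mid\cF_{K-1})\Big]\ge p_{K-1}\,\PP\Big(\cap_{j\le K-2}E_{1,j}\Big),
\end{align*}
and the induction hypothesis completes the proof of \eqref{eq:delta-prob-overall-bound_init}.
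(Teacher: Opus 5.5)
Your proposal is correct and follows the paper's proof essentially step for step. It uses the same Young/complete-the-square bound $\Delta_k \le 2\kappa_f + \Lambda^2/\alpha_0 + \alpha_k\|c(x_{k-1})\|^2$, the same application of \eqref{eq:feas-equiv} at index $k-1$ with $\alpha_k=\beta\alpha_{k-1}$, and the same inclusion of the solver-success event into $\{\Delta_k\le\Delta^*\}$ followed by tower-property chaining. Your reading of the constant as $\kappa_{\nabla f}+\kappa_{\nabla c}\Lambda$ is the right one; the swapped subscripts in $\Delta^*$ are a typo, and the paper's own final step carries it over.

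The only deviation is that you chain the events $E_{1,j}$ rather than $\{\Delta_k\le\Delta^*\}$. This is harmless but not needed. The paper chains $\{\Delta_k\le\Delta^*\}$ directly, which is valid because that event depends only on $(x_{k-1},\lambda_k)$ and is therefore $\cF_k$-measurable by the definition of $\cF_k$.
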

\begin{proof}
    Using Young's inequality, \Cref{assn:abs-true-bounds}, {$\|\lambda_k\| \leq \Lambda$}, and $\alpha_k \geq \alpha_0$ for all $k\in \NN_0$, we have, 
    \begin{align}
        \cL_{\alpha_k}(x_{k-1}, \lambda_k) &- \cL_{\alpha_k}(x_k^*, \lambda_k) \notag\\
        & = f(x_{k-1}) + \lambda_k^\top c(x_{k-1}) + \tfrac{\alpha_k}{2} \|c(x_{k-1})\|^2 - f(x_k^*) - \lambda_k^\top c(x_k^*) - \tfrac{\alpha_k}{2}  \|c(x_k^*)\|^2 \notag \\
        &\leq f(x_{k-1}) + \lambda_k^\top c(x_{k-1}) + \tfrac{\alpha_k}{2} \|c(x_{k-1})\|^2 - f(x_k^*) + \tfrac{1}{2 \alpha_k} \| \lambda_k \|^2  \notag \\
        & \leq 2 \kappa_{f}  + \| \lambda_k \| \| c(x_{k-1}) \| + \tfrac{\alpha_k}{2} \|c(x_{k-1})\|^2 + \tfrac{\| \lambda_k \|^2}{2 \alpha_k}  \notag \\
        &\leq 2 \kappa_{f}  + \Lambda \| c(x_{k-1}) \| + \tfrac{\alpha_k}{2} \|c(x_{k-1})\|^2 + \tfrac{\Lambda^2}{2 \alpha_k}  \notag \\
        &\leq 2 \kappa_{f} + \alpha_k \|c(x_{k-1})\|^2 + \tfrac{\Lambda^2}{\alpha_0} \label{eq:opt_gap_general} \\
        & \leq 2 \kappa_{f} + \alpha_k \kappa_{c}^2 + \tfrac{\Lambda^2}{\alpha_0}. \label{eq:opt_gap_upper}
    \end{align}
    Therefore, $\Delta_k \leq \Delta^*_k = \cO(\alpha_k)$ for all $k \in \NN_o$, the equality is due to the fact that $\alpha_k$ keeps growing with $k$, whereas $\kappa_f,\kappa_c, \alpha_0$, and $\Lambda$ are constant with respect to $k$.
    
    Case (i): If \textbf{Option I} is chosen in \Cref{alg:mescal-overview}, then, taking expectation in \eqref{eq:opt_gap_general}, and using \Cref{assn:abs-true-bounds}, \eqref{eq:feas-equiv}, \eqref{eq:1st-order-exp} and $\epsilong_k < 1$,  it holds for all $k \geq 1$ that
    \begin{align*}
        \EE [\cL_{\alpha_k}(x_{k-1}, \lambda_k) - \cL_{\alpha_k}(x_k^*, \lambda_k) 
        ] &\leq 2 \kappa_{f} + \tfrac{\Lambda^2}{\alpha_0} +  \alpha_k \EE[\|c(x_{k-1})\|^2 
        ] \\
        & \leq 2 \kappa_{f} + \tfrac{\Lambda^2}{\alpha_0} +  \kappa_c \beta \alpha_{k-1} \EE[\|c(x_{k-1})\|
        ] \\
        & \leq 2 \kappa_{f} + \tfrac{\Lambda^2}{\alpha_0} + \tfrac{\kappa_c \beta(1 + \kappa_{\nabla c} + \Lambda \kappa_{\nabla f})}{C_{\text{reg}}}.
    \end{align*}
    Case (ii): Similarly, if \textbf{Option II} is chosen in \Cref{alg:mescal-overview}, then, using \eqref{eq:opt_gap_general}, \Cref{assn:abs-true-bounds}, \eqref{eq:feas-equiv}, and \eqref{cond:prob_primal_prob}, we have, {for all $k \geq 1$ that}
    \begin{align}
        \PP(\cL_{\alpha_k}(x_{k-1}, \lambda_k) &- \cL_{\alpha_k}(x_k^*, \lambda_k) \leq \Delta^* 
        \mid \cF_{k - 1}
        ) \notag\\
        &\geq \PP(2 \kappa_{f} + \alpha_k \|c(x_{k-1})\|^2 + \tfrac{\Lambda^2}{\alpha_0} \leq \Delta^*
        \mid \cF_{k - 1}
        ) \notag\\
        &\geq \PP(\| \mathrm{proj}_{T_\cX (x_{k-1})} (- \nabla \cL_{\alpha_{k-1}}(x_{k-1}, \lambda_{k-1})) \| \leq \epsilong_{k-1}\mid 
        \cF_{k - 1} %
        )  \notag\\
        &\geq p_{k-1} \label{eq:lem36_prob_1}
    \end{align}
    For $k\in\{1,\ldots,K\}$, define the events 
    \begin{align*}
        D_k := \{ \Delta_k \leq \Delta^* \}, \quad D:=\cap_{k=1}^K D_k.
    \end{align*}
    Using \eqref{eq:lem36_prob_1} and indicator function, we have
    \begin{align}
        \PP(D_k \mid 
        \cF_{k - 1} %
        ) = \EE[\mathbf{1}_{D_k} \mid 
        \cF_{k - 1} %
        ] \geq p_{k-1} 
        ,\quad \forall k = \{1, \cdots, K\}. \label{eq:single-Delta-prob}
    \end{align}
    Considering $\PP\lt(D 
    \rt)$ and using \eqref{eq:single-Delta-prob}, we obtain 
    \begin{align}
        \PP\lt(D
        \rt) &= \EE[\pprod_{k=1}^K \mathbf{1}_{D_k} 
        ] \\ %
        &= \EE[\mathbf{1}_{D_K} \pprod_{k=1}^{K-1} \mathbf{1}_{D_k} 
        ] \notag \\
        &= \EE[\EE[\mathbf{1}_{D_K} \pprod_{k=1}^{K-1} \mathbf{1}_{D_k} 
        \mid \cF_{K-1} %
        ] 
        ] \notag \\
        &= \EE[\EE[\mathbf{1}_{D_K} \mid
        \cF_{K-1} %
        ] \pprod_{k=1}^{K-1} \mathbf{1}_{D_k} 
        ] \notag \\
        &\geq p_{K-1} \EE[ \pprod_{k=1}^{K-1} \mathbf{1}_{D_k} 
        ]. \label{eq:delta-unroll-end} 
    \end{align}
    where the fourth equality follows because $\pprod_{k=1}^{K-1}\mathbf{1}_{D_k}$ is determined by $x_{K-2}$ and $\lambda_{K-1}$.
    Applying the same argument recursively yields
    \begin{align*}%
        \PP(D
        ) \geq \prod_{k=0}^{K-1}p_k.
    \end{align*}
\end{proof}

\begin{remark} \label{rmk:opt-gap-discuss}
    We established three bounds on the initial optimality gap $\Delta_k$ of the primal subproblem at each iteration in \Cref{lemm:1-lower-bound-finite}. These bounds are useful for establishing the sample complexity results. Among the three bounds, the deterministic bound in \eqref{eq:det_delta_bnd} provides the strongest type of guarantee, as it holds deterministically; however, the corresponding bound $\Delta^*_k$ increases with $k$ and is therefore less refined than the bounds obtained in expectation and in probability. We use the deterministic bound to establish deterministic sample complexity results for the different primal solvers considered in \Cref{sec:active_solvers}. In contrast, the expectation bound in \eqref{eq:exp_delta_bnd} and the probability bound in \eqref{eq:prob_delta_bnd} provide weaker types of guarantees, but yield the improved uniform bound 
    {$\Delta_k \leq \Delta^*$.}
    These bounds are therefore used to establish stronger sample complexity results in expectation and in probability, respectively. Such expectation-based initial optimality gap bounds have also been utilized in the literature for analyzing stochastic subproblem solvers; see, e.g., \cite[Lemma 5]{li_stochastic_2024}. The improvement in our expectation and probability bounds follows from exploiting the fact that $\|c(x_{k-1})\|$ decreases as the iterations progress, either in expectation or in probability, respectively. Nevertheless, these bounds remain weaker than those established in the strongly convex setting in \cite{bollapragada_adaptive_2023}, where the optimality gap is shown to decrease as the iterations progress due to the strong convexity of the primal subproblems considered therein.
\end{remark}

Based on the technical \Cref{lemm:1-lower-bound-finite,lemm:2-lipschitz-grad-aug-lag,lemm:3-lipschitz-hess-aug-lag,lemm:4-finite-bounded-variance,lemm:5-finite-hess-variance}, we introduce the following definition for ease of presentation.  

\begin{definition} \label{def:complexity_factors}
    Suppose Assumptions \ref{assn:abs-true-bounds}, {\ref{assn:reg-c-nabla-c}}, \ref{assn:lip-nabla-f-nabla-c}, \ref{assn:lip-nabla2-f-nabla2-c}, and \ref{assn:var-bounds} hold. Then, for each iteration $k \in \NN_0$ of \Cref{alg:mescal-overview}, we define the following quantities: %
    \begin{itemize}
        \item Lipschitz constant $L^{(g)}_{\cL, k}$ for $\nabla \bar{\cL}_{\alpha_k}(\cdot, \lambda_k; \theta)$ as per \Cref{lemm:2-lipschitz-grad-aug-lag}.

        \item Lipschitz constant $L^{(H)}_{\cL, k}$ for $\nabla^2 \bar{\cL}_{\alpha_k}(\cdot, \lambda_k;\theta)$ as per \Cref{lemm:3-lipschitz-hess-aug-lag}.

        \item Variance bound $\sigma^{(g)}_{\cL, k}$ for $\nabla \bar{\cL}_{\alpha_k}(\cdot, \lambda_k;\theta)$ as per \Cref{lemm:4-finite-bounded-variance}.

        \item Variance bound $\sigma^{(H)}_{\cL, k}$ for $\nabla^2 \bar{\cL}_{\alpha_k}(\cdot, \lambda_k; \theta)$ as per \Cref{lemm:5-finite-hess-variance}.

        \item Initial optimality gap $\Delta_k$ as per \Cref{lemm:1-lower-bound-finite}.
    \end{itemize}
    
\end{definition}

We are now ready to provide the sample complexity results for the MESCAL framework. We first define the work complexity of a primal solver $\cM$.

\begin{definition}\label{def:primal_work_at_k} 
    We define the work complexity of a primal solver $\cM$ employed at iteration $k \in \NN_0$ in \Cref{alg:mescal-overview}, denoted by $\cW^{(p)}_{\cM,k}$, as the total number of stochastic augmented Lagrangian gradient evaluations $\nabla \bar{\cL}_{\alpha_k}(x,\lambda_k;\theta)$ and stochastic augmented Lagrangian Hessian vector product evaluations $\nabla^2 \bar{\cL}_{\alpha_k}(x,\lambda_k;\theta)v$ for arbitrary vectors $v$.
\end{definition}

We next state a technical lemma establishing a bound on the sample complexity of \Cref{alg:mescal-overview} in terms of the work complexity of the primal solver $\cM$. We set the dual sample sizes to $|S^{(d)}_k| = \Theta((\epsilong)^{-2})$ for all $k \in \NN_0$, as described in \Cref{rmk:iter-complexity}.

\begin{lemma} \label{lemm:gen-complexity-solver}
    Suppose Assumptions \ref{assn:abs-true-bounds} and \ref{assn:reg-c-nabla-c} hold. Let $\{x_k\}$ be the sequence of iterates generated by \Cref{alg:mescal-overview} with initial iterate $x_{-1} \in \cX$ and initial dual iterate $\lambda_0 \in \RR^m$. Suppose that the dual bound parameters $\gamma_k > 0$ be a bounded sequence satisfying $\sum_{k=0}^{\infty} \gamma_k = C_\gamma < \infty$, and that the dual sample sizes satisfy $|S^{(d)}_k|=\Theta((\epsilong)^{-2})$ for all $k \in \NN_0$. If the error bounds are chosen such that $\epsilong_k = \epsilong < 1$ and $\epsilonH_k = \epsilonH < 1$ for any $k \in \NN_0$, then, for any $k=K \geq K^*$, where $K^*$ is defined in \eqref{eq:iter_comp_log}, the sample complexity $\cW$, as defined in \Cref{def:sample_complexity}, satisfies 
    \begin{equation}\label{eq:sample-complexity-bound}
        \cW = \tilde{\cO}\lt( \lt(\sum_{k=0}^K \cW_{\cM, k}^{(p)}\rt) + (\epsilong)^{-2} \rt),
    \end{equation}
    where $\cW_{\cM,k}^{(p)}$ is the work complexity of solver $\cM$ defined in \Cref{def:primal_work_at_k}.
\end{lemma}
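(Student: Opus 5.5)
The plan is a direct accounting argument that decomposes the total work of \Cref{alg:mescal-overview} over the first $K+1$ outer iterations ($k=0,\dots,K$) into primal-solver work and dual-update work. I will then bound each piece using only quantities already controlled in \Cref{thm:outer-iteration-complexity} and \Cref{rmk:iter-complexity}. By \Cref{def:sample_complexity}, $\cW$ counts stochastic gradient, Hessian-vector product and constraint evaluations. Every such evaluation performed by \Cref{alg:mescal-overview} occurs either inside the primal solver $\cM$ in Line~\ref{step:primal_prob}, or in forming $\overline{C}_{S^{(d)}_k}(x_k)$ for the dual update in Line~\ref{step:dual-update}. The outer loop is run until iteration $K$; by \Cref{thm:outer-iteration-complexity}, any $K\ge K^*$ already yields the desired stationarity, so it suffices to count work for $k=0,\dots,K$.

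\textbf{Primal work.} Each call to $\nabla \bar{\cL}_{\alpha_k}(x,\lambda_k;\theta)$ in \eqref{eq:sample_gradient_estimator} uses one sample $\xi$ and two samples $\zetaone,\zetatwo$. It therefore costs one $\nabla F$, $m$ evaluations of $\nabla C^{(j)}$, and $m$ evaluations of $C^{(j)}$, i.e.\ $\cO(m)$ elementary evaluations.

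Each Hessian-vector product $\nabla^2 \bar{\cL}_{\alpha_k}(x,\lambda_k;\theta)v$ from \eqref{eq:sample_hessian_estimator} can be formed without building matrices, as
\begin{align*}
\nabla^2 F(x,\xi)v+\ssum_j(\lambda_k^{(j)}+\alpha_k C^{(j)}(x,\zetatwo))\nabla^2 C^{(j)}(x,\zetaone)v+\alpha_k\nabla C(x,\zetaone)^\top(\nabla C(x,\zetatwo)v).
\end{align*}
This requires one $\nabla^2F v$, $m$ products $\nabla^2 C^{(j)}v$, $m$ values $C^{(j)}$, and $2m$ gradients $\nabla C^{(j)}$, which is again $\cO(m)$.

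Treating $m$ as a constant, the number of elementary evaluations at iteration $k$ is therefore $\cO(\cW^{(p)}_{\cM,k})$ by \Cref{def:primal_work_at_k}. Summing over $k$ gives $\cO\big(\sum_{k=0}^K\cW^{(p)}_{\cM,k}\big)$.

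\textbf{Dual work.} Iteration $k$ uses $|S^{(d)}_k|=\Theta((\epsilong)^{-2})$ samples, each costing $m$ constraint evaluations. The total over all iterations is
\begin{align*}
\cO\big((K+1)(\epsilong)^{-2}\big).
\end{align*}

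\textbf{Main obstacle.} The delicate step is absorbing the factor $K+1$ into $\tilde{\cO}$. By \eqref{eq:iter_comp_log}, $K^*=\cO(\log(1/\epsilong))$. I will take $K=K^*$ (or $K=\cO(K^*)$, which the statement implicitly intends), so that
\begin{align*}
(K+1)(\epsilong)^{-2}=\cO\big(\log(1/\epsilong)(\epsilong)^{-2}\big)=\tilde{\cO}((\epsilong)^{-2}),
\end{align*}
since $\tilde{\cO}$ suppresses polylogarithmic factors. If $K$ is arbitrary with $K\ge K^*$, the bound should instead be read with $K$ logarithmic in $1/\epsilong$; I will state this convention explicitly.

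Adding the primal and dual contributions yields \eqref{eq:sample-complexity-bound}. The feasibility of the requirements $\epsilong_k=\epsilong$ and $\epsilonH_k=\epsilonH$ and the bound $\|\lambda_k\|\le\Lambda$ are inherited from \Cref{thm:outer-iteration-complexity}, so no further probabilistic argument is needed: the bound is a deterministic count given the $\cW^{(p)}_{\cM,k}$.
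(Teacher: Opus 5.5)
Your proposal is correct and follows the paper's proof. It uses the same split $\cW=\cW^{(p)}_{\cM}+\cW^{(d)}$, the same observation that each stochastic augmented Lagrangian gradient or Hessian-vector product costs $\cO(m)$ elementary oracle calls, and the same absorption of the factor $K+1=\cO(\log(1/\epsilong))$ from the dual samples into $\tilde{\cO}((\epsilong)^{-2})$. Your caveat that this last step needs $K=\cO(K^*)$ rather than literally any $K\ge K^*$ is a fair tightening of a point the paper leaves implicit.
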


\begin{proof}
    Let $\cW^{(p)}_{\cM}$ denote the total number of stochastic objective gradient, objective Hessian vector product, constraint function, constraint gradient, and constraint Hessian vector product evaluations used by the solver $\cM$ over iterations $k=0,\ldots,K$. Let $\cW^{(d)}$ denote the total number of stochastic constraint function evaluations used in the dual updates over the same iterations. Then, \begin{equation}\label{eq:total-work} 
        \cW = \cW^{(p)}_{\cM} + \cW^{(d)}. 
    \end{equation} 
    We first consider the work associated with the dual updates. At each iteration $k$, the dual update uses $m|S^{(d)}_k|$ stochastic constraint function evaluations. Thus, 
    \begin{align} 
        \cW^{(d)} = \sum_{k=0}^K m|S^{(d)}_k| = \sum_{k=0}^K \Theta\lt((\epsilong)^{-2}\rt) = \cO\lt((K+1)(\epsilong)^{-2}\rt)=\tilde{\cO}\lt((\epsilong)^{-2}\rt), \label{eq:dual-work} 
    \end{align}
    where the last equality is due to $K^*=\cO(\log(1/\epsilong))$. 
    
    Next, {we} consider the work associated with the primal subproblems. From \eqref{eq:sample_gradient_estimator} and \eqref{eq:sample_hessian_estimator}, each stochastic augmented Lagrangian gradient evaluation requires a constant number of objective gradient, constraint function, and constraint gradient evaluations, while each stochastic augmented Lagrangian Hessian vector product evaluation requires a constant number of objective and constraint Hessian vector product evaluations, constraint function evaluations, and constraint gradient evaluations. Since the number of constraint components is $m$, which is fixed, the total number of underlying stochastic oracle evaluations required at each outer iteration is of the same order as $\cW^{(p)}_{\cM,k}$. Therefore, 
    \begin{align} 
        \cW^{(p)}_{\cM} = \cO\lt(\sum_{k=0}^{K} \cW^{(p)}_{\cM,k}\rt). \label{eq:primal-work} 
    \end{align}
    Substituting \eqref{eq:dual-work} and \eqref{eq:primal-work} in \eqref{eq:total-work} completes the proof. 
\end{proof}

We make the following assumption on the theoretical sample complexity guarantees of a general solver $\cM$. 

\begin{assumption} \label{assn:theoretical-sample-size}
    Suppose that Assumptions \ref{assn:abs-true-bounds}, {\ref{assn:reg-c-nabla-c}}, \ref{assn:lip-nabla-f-nabla-c}, \ref{assn:lip-nabla2-f-nabla2-c}, and \ref{assn:var-bounds} hold. For each $k \in \NN_0$, define
    \begin{equation}\label{eq:assm34_bound_total}
        {\mathcal{B}_{\cM, k}} = \min \lt\{ \tfrac{\Delta_k \sigma^{(g)}_{\cL, k}  \sigma^{(H)}_{\cL, k}}{(\epsilong_k)^3}, \tfrac{ \Delta_k\sigma^{(g)}_{\cL, k}(L^{(H)}_{\cL, k})^{0.5} }{(\epsilong_k)^{3.5}}, \tfrac{\Delta_k  \sigma^{(g)}_{\cL, k}L^{(g)}_{\cL, k}}{(\epsilong_k)^4} \rt\} + \tfrac{\Delta_k (\sigma^{(H)}_{\cL, k})^2 (L^{(H)}_{\cL, k})^2}{(\epsilonH_k)^5},
    \end{equation}
    where $\{\Delta_k\}$, $\{L^{(g)}_{\cL, k}\}$, $\{L^{(H)}_{\cL, k}\}$, $\{\sigma^{(g)}_{\cL, k}\}$, $\{\sigma^{(H)}_{\cL, k}\}$ are defined in \Cref{def:complexity_factors}. We assume that one of the following holds:
    \begin{enumerate}[label=(\roman*)] 
        \item Under \textbf{Option I}, $\cM$ returns an iterate satisfying \eqref{cond:exp_primal_prob} with \begin{align*} 
            \cW_{\cM,k}^{(p)} = \tilde{\cO}( {\mathcal{B}_{\cM, k}}).
        \end{align*} 
        \item Under \textbf{Option II}, $\cM$ returns an iterate satisfying \eqref{cond:prob_primal_prob} for any $r\geq 1$, with 
        \begin{align*} 
            \cW_{\cM,k}^{(p)} = \cO\lt( {\mathcal{B}_{\cM, k}} \log^r\lt(\tfrac{1}{1-p_k}\rt) \rt).
        \end{align*}
    \end{enumerate}
\end{assumption}
\begin{remark}
    We make the following remarks {regarding \Cref{assn:theoretical-sample-size}}.
    \begin{itemize}
        \item %
        We are unaware of a stochastic second-order solver $\cM$ with established work complexity guarantees for the general convex set $\cX$ considered in this paper. 
        We therefore formulate work complexity bounds %
        guided by the lower bound result in \cite[Eq.~(13)]{arjevani_secondorder_2020}. In \cite{arjevani_secondorder_2020}, Arjevani et al. %
        established lower bounds %
        for computing an %
        $(\epsilong, \epsilonH)$-accurate stationary point satisfying probabilistic conditions \eqref{cond:prob_primal_prob}, %
        for $p\geq0.5$ and $\cX = \RR^d$. These lower bounds are %
        consistent with the upper bounds %
        available for SG-HV-NC \cite[Algorithm 4]{arjevani_secondorder_2020}, Natasha2 \cite{allen-zhu_natasha_2018}, and SPIDER \cite{fang_spider_2018}. 
        The corresponding MESCAL complexity results for these methods are established in \Cref{sec:active_solvers}, under the restriction $\cX=\RR^d$.

        \item The lower bound in \cite[Eq.~(13)]{arjevani_secondorder_2020} does not characterize the dependence on the desired success probability $p$. We therefore include a polylogarithmic dependence on the failure probability $1-p$, so that the work complexity increases as $p$ tends to one. Such dependence is common in high-probability stochastic optimization guarantees; see, for example, \cite{cutkosky_highprobability_2021,ghadimi_stochastic_2013,roosta-khorasani_subsampled_2019,sadiev_highprobability_2023}. Furthermore, the dependence on initial optimality gap, Lipschitz constants, and variance bounds is essential within the MESCAL framework for accurate calculations of sample complexity bounds, as these terms may contribute to increasing the complexity bound. {Accordingly, these constants are included explicitly in the work complexity bounds. }  

        \item Although the %
        lower bound in \cite[Eq. (13)]{arjevani_secondorder_2020} is stated for $\cX = \RR^d$, several proximal methods attain work complexities comparable to those of their unconstrained counterparts; see, e.g., \cite{bollapragada_convergence_2026,li_simple_2018,liang_almost_2024}. %
        In addition, high-probability guarantees often differ from their expectation-based counterparts only by logarithmic factors under suitable assumptions; see \cite{cutkosky_highprobability_2021,sadiev_highprobability_2023}.
        \item The %
        {method }proposed and analyzed in \cite{berahas_exploiting_2026} %
        computes an $(\epsilong, \epsilonH)$-accurate %
        second-order stationary point %
        under $\cX = \RR^d$ {satisfying \eqref{cond:exp_primal_prob}}, but its work complexity is not established therein. We therefore also analyze MESCAL under \textbf{Option I}, assuming that the solver $\cM$ satisfies the expected work complexity bound in \Cref{assn:theoretical-sample-size}. The resulting sample complexity guarantees consequently apply to any solver satisfying these conditions. %
        
      \end{itemize}  
\end{remark}

\begin{theorem} \label{thm:theoretical-solver}
    
    Suppose Assumptions \ref{assn:abs-true-bounds}, \ref{assn:reg-c-nabla-c}, \ref{assn:lip-nabla-f-nabla-c}, \ref{assn:lip-nabla2-f-nabla2-c}, \ref{assn:var-bounds}, and \ref{assn:theoretical-sample-size} hold. Let $\{x_k\}$ be the sequence of iterates generated by \Cref{alg:mescal-overview} with initial iterate $x_{-1} \in \cX$ and initial dual iterate $\lambda_0 \in \RR^m$. Suppose that the dual bound parameters $\gamma_k > 0$ be a bounded sequence satisfying $\sum_{k=0}^{\infty} \gamma_k = C_\gamma < \infty$, and that the dual sample sizes satisfy $|S^{(d)}_k|=\Theta((\epsilong)^{-2})$ for all $k \in \NN_0$. If the error bounds are chosen such that $\epsilong_k = \epsilong < 1$ and $\epsilonH_k = \epsilonH < 1$ for any $k \in \NN_0$, then, for any $k=K \geq K^*$, where $K^*$ is defined in \eqref{eq:iter_comp_log}, the sample complexity $\cW$, as defined in \Cref{def:sample_complexity}, satisfies the following guarantees: 
    \begin{enumerate}[label=(\roman*)]
        \item under \textbf{(Option I)}, when \Cref{assn:theoretical-sample-size}$\mathrm{(i)}$ holds, the expected sample complexity to obtain an $(\epsilong,\epsilonH)$-accurate expected second-order stationary point $\tilde{x}:=x_K$ satisfying \Cref{def:opt_cond_exp} for solving \eqref{eq:gen_formula}, with $\tilde{\lambda} := \lambda_K + \alpha_K c(x_K)$, is %
        \begin{align*}
            \EE[\cW] = \tilde{\cO}((\epsilong)^{-5} + (\epsilong)^{-4} (\epsilonH)^{-5}).
        \end{align*}

        \item under \textbf{(Option II)} with $K = K^*$, when \Cref{assn:theoretical-sample-size}$\mathrm{(ii)}$ holds and $p_k = p^{1/K}$, the sample complexity to obtain an $(\epsilong, \epsilonH)$-accurate $p$-probabilistic second-order stationary point $\tilde{x}:=x_K$ satisfying \Cref{def:opt_cond_prob} for solving \eqref{eq:gen_formula}, with $\tilde{\lambda} := \lambda_K + \alpha_K c(x_K)$, is%
        \begin{align*}
            \cW = \tilde{\cO}((\epsilong)^{-5} + (\epsilong)^{-4} (\epsilonH)^{-5}),
        \end{align*}
        with probability at least $p$.
    \end{enumerate}
\end{theorem}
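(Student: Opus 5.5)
The plan is to combine \Cref{lemm:gen-complexity-solver} with the per-iteration work bound of \Cref{assn:theoretical-sample-size}. First we bound $\mathcal{B}_{\cM,k}$ by an explicit power of $\alpha_k$. Then we sum the resulting geometric series up to $K$ and use that $\alpha_K=\Theta((\epsilong)^{-1})$ when $K=\Theta(K^*)$.

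\emph{Step 1: per-iteration bound.} The proof of \Cref{thm:outer-iteration-complexity} gives $\|\lambda_k\|\le\Lambda$ deterministically. Hence \Cref{lemm:2-lipschitz-grad-aug-lag,lemm:3-lipschitz-hess-aug-lag,lemm:4-finite-bounded-variance,lemm:5-finite-hess-variance} apply, and each of $L^{(g)}_{\cL,k},L^{(H)}_{\cL,k},\sigma^{(g)}_{\cL,k},\sigma^{(H)}_{\cL,k}$ is at most $c_1+\alpha_k c_2=\cO(\alpha_k)$, since $\alpha_k\ge\alpha_0$. These bounds are deterministic because $\alpha_k=\alpha_0\beta^k$ is deterministic. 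In the minimum in \eqref{eq:assm34_bound_total} we keep only the first term. With $\epsilong_k=\epsilong$ and $\epsilonH_k=\epsilonH$ this gives
\[
\mathcal{B}_{\cM,k}\le \Delta_k\,\cO\big(\alpha_k^2(\epsilong)^{-3}+\alpha_k^4(\epsilonH)^{-5}\big)=:\Delta_k\, b_k,
\]
where $b_k$ is deterministic. For $k=0$ we use the deterministic bound $\Delta_0\le\Delta_0^*=\cO(\alpha_0)=\cO(1)$ from \eqref{eq:det_delta_bnd}. For $k\ge1$ we use $\Delta^*$ from \Cref{lemm:1-lower-bound-finite}, which is independent of $k$ and $\epsilong$.

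\emph{Step 2: summation.} Since $\alpha_k$ grows geometrically, $\sum_{k=0}^K\alpha_k^q\le \tfrac{\beta^q}{\beta^q-1}\alpha_K^q$. By \eqref{eq:iter_comp_log}, at $K=K^*$ we have $\alpha_{K^*}\le \beta\,\tfrac{1+\kappa_{\nabla f}+\kappa_{\nabla c}\Lambda}{C_{\text{reg}}\epsilong}=\cO((\epsilong)^{-1})$. Therefore
\[
\sum_{k=0}^K b_k=\cO\big((\epsilong)^{-5}+(\epsilong)^{-4}(\epsilonH)^{-5}\big).
\]
The additive $(\epsilong)^{-2}$ from \eqref{eq:sample-complexity-bound} is dominated by this. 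For part (i) with a general $K\ge K^*$, the statement implicitly takes $K=\cO(K^*)$, i.e.\ $K-K^*$ bounded, so that $\alpha_K=\cO((\epsilong)^{-1})$ still holds. I would state this explicitly.

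\emph{Step 3a: Option I.} By \Cref{assn:theoretical-sample-size}(i), conditionally on $\cF_k$ we have $\cW^{(p)}_{\cM,k}=\tilde{\cO}(\Delta_k b_k)$, and $\Delta_k$ is $\cF_k$-measurable. Taking total expectations and using \eqref{eq:exp_delta_bnd} gives $\EE[\cW^{(p)}_{\cM,k}]=\tilde{\cO}(\Delta^* b_k)$. Linearity of expectation in \eqref{eq:sample-complexity-bound} and Step~2 then yield the claim in (i).

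\emph{Step 3b: Option II.} Let $D=\cap_{k=1}^K\{\Delta_k\le\Delta^*\}$. By \eqref{eq:delta-prob-overall-bound_init} with $p_k=p^{1/K}$, we get $\PP(D)\ge\prod_{k=0}^{K-1}p^{1/K}=p$. On $D$, \Cref{assn:theoretical-sample-size}(ii) gives
\[
\cW^{(p)}_{\cM,k}=\cO\big(\Delta^* b_k\log^r(\tfrac{1}{1-p^{1/K}})\big).
\]
Since $1-p^{1/K}\ge (1-p)/K$, the logarithmic factor is $\cO(\log^r(K/(1-p)))$. With $K=K^*=\cO(\log(1/\epsilong))$ this is polylogarithmic and is absorbed into $\tilde{\cO}$. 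Summing as in Step~2 gives the bound in (ii) on $D$, hence with probability at least $p$. The stationarity of $x_K$ itself follows from \Cref{thm:outer-iteration-complexity}.

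The main obstacle is handling the randomness of $\Delta_k$ correctly. In expectation we need the tower property together with the deterministic nature of all the other factors. In probability we must chain the conditional bounds so that the failure probability stays at most $1-p$ while the $\log(1/(1-p_k))$ factor remains polylogarithmic. A secondary point is making sure that dropping the minimum in favour of its first term, and the geometric growth of $\alpha_k$, produce exactly the exponents $\alpha_K^2$ and $\alpha_K^4$ with $\alpha_K=\cO((\epsilong)^{-1})$.
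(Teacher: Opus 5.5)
Your proposal is correct and follows essentially the same route as the paper: bound the four constants by $\cO(\alpha_k)$, use $\Delta_0^*$ at $k=0$ and the $k$-independent $\Delta^*$ for $k\ge 1$ (via the tower property under Option I, or on the event $D$ with $\PP(D)\ge p$ under Option II), and sum the geometric series using $\beta^K=\cO((\epsilong)^{-1})$. Your elementary bound $1-p^{1/K}\ge(1-p)/K$ is a clean non-asymptotic substitute for \Cref{lemm:log-prob-term}, and you are right that part (i) tacitly needs $K-K^*$ bounded; note, however, that $K=\cO(K^*)$ alone is not enough (e.g.\ $K=2K^*$ gives $\alpha_K=\Theta((\epsilong)^{-2})$), so the ``i.e.'' in your Step 2 should be read as the stronger condition that $K-K^*$ is bounded.
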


\begin{proof}
    By Lemmas \ref{lemm:2-lipschitz-grad-aug-lag}-\ref{lemm:5-finite-hess-variance}, we have,
    \begin{equation} \label{eq:aug-constants-complexity-bounds}
    \begin{aligned}
        L^{(g)}_{\cL, k} &\leq L^{(g)}_1 + \alpha_k L^{(g)}_2 = \cO(\alpha_k), \quad L^{(H)}_{\cL, k} \leq L^{(H)}_1 + \alpha_k L^{(H)}_2 = \cO(\alpha_k), \\
        \sigma^{(g)}_{\cL, k} &\leq \sigma^{(g)}_1 + \alpha_k \sigma^{(g)}_2 = \cO(\alpha_k), \quad\sigma^{(H)}_{\cL, k} \leq \sigma^{(H)}_1 + \alpha_k \sigma^{(H)}_2 = \cO(\alpha_k),
    \end{aligned}
    \end{equation}
    where the equalities are due to the fact that $\alpha_k$ keeps growing with $k$, whereas $L^{(g)}_1$, $L^{(g)}_2$, $L^{(H)}_1$, $L^{(H)}_2$, $\sigma^{(g)}_1$, $\sigma^{(g)}_2$, $\sigma^{(H)}_1$, and $\sigma^{(H)}_2$ are constant with respect to $k$. Let 
    \begin{equation} \label{eq:work_bound_1}
        \mathscr{C}_{\cM, k} = \min \lt\{ \tfrac{ \sigma^{(g)}_{\cL, k}  \sigma^{(H)}_{\cL, k}}{(\epsilong_k)^3}, \tfrac{ \sigma^{(g)}_{\cL, k}(L^{(H)}_{\cL, k})^{0.5} }{(\epsilong_k)^{3.5}}, \tfrac{ \sigma^{(g)}_{\cL, k}L^{(g)}_{\cL, k}}{(\epsilong_k)^4} \rt\} + \tfrac{(\sigma^{(H)}_{\cL, k})^2 (L^{(H)}_{\cL, k})^2}{(\epsilonH_k)^5}.
    \end{equation}
    Therefore, from \eqref{eq:assm34_bound_total}, we have $\mathcal{B}_{\cM, k}=\Delta_k\mathscr{C}_{\cM, k}$. Considering $\mathscr{C}_{\cM, k}$ %
    and using \eqref{eq:aug-constants-complexity-bounds}, $\epsilong_k = \epsilong$, and $\epsilonH_k = \epsilonH$, we obtain 
    \begin{align}
        \mathscr{C}_{\cM, k} &= \min \lt\{ \tfrac{ \sigma^{(g)}_{\cL, k}  \sigma^{(H)}_{\cL, k}}{(\epsilong_k)^3}, \tfrac{ \sigma^{(g)}_{\cL, k}(L^{(H)}_{\cL, k})^{0.5} }{(\epsilong_k)^{3.5}}, \tfrac{ \sigma^{(g)}_{\cL, k}L^{(g)}_{\cL, k}}{(\epsilong_k)^4} \rt\} + \tfrac{(\sigma^{(H)}_{\cL, k})^2 (L^{(H)}_{\cL, k})^2}{(\epsilonH_k)^5} \notag \\
        & = \cO \lt( \min \lt\{ \tfrac{(\alpha_k)^2}{(\epsilong)^3}, \tfrac{ (\alpha_k)^{1.5} }{(\epsilong)^{3.5}}, \tfrac{(\alpha_k)^2}{(\epsilong)^4} \rt\} + \tfrac{(\alpha_k)^4}{(\epsilonH)^5} \rt). \label{eq:work-scrC-bound}
    \end{align}
    Case (i): Under \textbf{Option I}, from \Cref{assn:theoretical-sample-size}(i), we establish that the iterate $x_k$ satisfies Condition \eqref{cond:exp_primal_prob} in \Cref{alg:mescal-overview}. Therefore, by \Cref{thm:outer-iteration-complexity}, it holds that $\tilde{x} := x_K$ is an $(\epsilong, \epsilonH)$-accurate expected second-order stationary point as defined in \Cref{def:opt_cond_exp}.
    
    {At} iteration $k=0$, the work complexity of solver $\cM$ is given by
    \begin{align}
        \cW^{(p)}_{\cM,0} =\tilde{\cO}(\Delta^*_0 \mathscr{C}_{\cM, 0}) = \tilde{\cO} \lt(  (\epsilong)^{-3} + (\epsilonH)^{-5} \rt), \label{eq:exp-k-zero-primal-work}
    \end{align} 
    where $\Delta^*_0$ is defined in \Cref{lemm:1-lower-bound-finite}. 
    
    Using %
    {\eqref{eq:exp_delta_bnd} from \Cref{lemm:1-lower-bound-finite}(i), }in expectation, the total work complexity over iterations $1, 2, \cdots, K$ for solver $\cM$ is
    \begin{align}
        \sum_{k=1}^K \EE[\cW^{(p)}_{\cM,k}] &=\tilde{\cO}\lt(\sum_{k=1}^K\EE[\Delta_k \mathscr{C}_{\cM, k}]\rt) =\tilde{\cO}\lt(\sum_{k=1}^K\EE[\Delta_k] \mathscr{C}_{\cM, k}\rt) \nonumber \\
        &=\tilde{\cO}\lt(\sum_{k=1}^K \min \lt\{ \tfrac{(\alpha_k)^2}{(\epsilong)^3}, \tfrac{ (\alpha_k)^{1.5} }{(\epsilong)^{3.5}}, \tfrac{(\alpha_k)^2}{(\epsilong)^4} \rt\} + \tfrac{(\alpha_k)^4}{(\epsilonH)^5}\rt) \nonumber \\        
        & = \tilde{\cO} \lt( \min \lt\{ \tfrac{\beta^{2K}}{(\epsilong)^3}, \tfrac{ \beta^{1.5K}}{(\epsilong)^{3.5}}, \tfrac{\beta^{2K}}{(\epsilong)^4} \rt\} + \tfrac{\beta^{4K}}{(\epsilonH)^5} \rt) \notag \\
        & = \tilde{\cO} \lt( (\epsilong)^{-5} + (\epsilong)^{-4} (\epsilonH)^{-5} \rt), \label{eq:exp-primal-work}
    \end{align}
    where the last equality is due to the fact that $\beta^K = \cO((\epsilong)^{-1})$. Combining \eqref{eq:exp-k-zero-primal-work} and \eqref{eq:exp-primal-work}, and substituting in \eqref{eq:sample-complexity-bound} yields the result. 
    
    Case (ii): Under \textbf{Option II}, from \Cref{assn:theoretical-sample-size}(ii), we establish that the iterate $x_k$ satisfies Condition \eqref{cond:prob_primal_prob} in \Cref{alg:mescal-overview}. Therefore, by \Cref{thm:outer-iteration-complexity}, it holds that $\tilde{x} := x_K$ is an $(\epsilong, \epsilonH)$-accurate $p$-probabilistic second-order stationary point as defined in \Cref{def:opt_cond_prob}, since $p \leq p^{1 / K}$. %

    {At} iteration $k=0$, the work complexity of solver $\cM$ is given by
    \begin{align}
        \cW^{(p)}_{\cM,0} =\cO\lt(\Delta^*_0 \mathscr{C}_{\cM, 0} \log^r \lt( \tfrac{1}{1-p^{1/K}} \rt) \rt) = \tilde{\cO} \lt(  (\epsilong)^{-3} + (\epsilonH)^{-5} \rt), \label{eq:prob-k-zero-primal-work}
    \end{align} 
    where $\Delta^*_0$ is defined in \Cref{lemm:1-lower-bound-finite} and  the final equality follows from \Cref{lemm:log-prob-term}.

    Let $D:=\cap_{k=1}^K \{\Delta_k \leq \Delta^*\}$. From \eqref{eq:delta-prob-overall-bound_init}, we have
    \begin{align}\label{eq:delta-prob-overall-bound}
        \PP(D 
        ) \geq 
        {\prod_{k=0}^{K-1}p_k = }
        \left(p^{1/K}\right)^K = p.
    \end{align}
    
    Now, define the event
    {
    \begin{align*}
        E_{\cW} := \lt\{
                \sum_{k=1}^K \cW_{\cM,k}^{(p)} =
                \tilde{\cO} \lt( (\epsilong)^{-5} + (\epsilong)^{-4} (\epsilonH)^{-5} \rt)
                \rt\}.
    \end{align*}}
    Suppose that event $D$ holds. Then, by \Cref{assn:theoretical-sample-size}(ii), {\eqref{eq:work_bound_1}}, and \Cref{lemm:log-prob-term}, we obtain
    {
    \begin{align*}
        \sum_{k=1}^K \cW^{(p)}_{\cM,k}
        &=
        \sum_{k=1}^K
        \tilde{\cO}
        \lt(
            \Delta_k \mathscr{C}_{\cM, k}
            \log^{r} \lt( \frac{1}{1-p^{1/K}} \rt)
        \rt) =
        \sum_{k=1}^K
        \tilde{\cO}
        \lt(
            \Delta^* \mathscr{C}_{\cM, k}
        \rt) =
        \tilde{\cO}
        \lt(
            (\epsilong)^{-5} + (\epsilong)^{-4} (\epsilonH)^{-5}
        \rt),
    \end{align*}}
    where the final inequality follows from \eqref{eq:exp-primal-work}. Consequently, $D \subseteq E_{\cW}$. Therefore, by \eqref{eq:delta-prob-overall-bound},
    {
    \begin{align}
        \PP \lt( \sum_{k=1}^K \cW_{\cM,k}^{(p)} \leq
            \tilde{\cO} \lt( (\epsilong)^{-5} + (\epsilong)^{-4} (\epsilonH)^{-5} \rt)
            \rt) \geq
        \PP (D 
        ) \geq p.
        \label{eq:work-1-K-primal-prob}
    \end{align}}
    
    Since the bound in \eqref{eq:prob-k-zero-primal-work} holds almost surely, combining it with \eqref{eq:work-1-K-primal-prob} and \eqref{eq:sample-complexity-bound} yields
    \begin{equation}
        \PP\lt( \cW = \tilde{\cO}((\epsilong)^{-5} + (\epsilong)^{-4} (\epsilonH)^{-5}) \rt) \geq p. \label{eq:complexity-general-prob}
    \end{equation}

\end{proof}

%% file: 4-existing-solvers.tex
\section{Existing Primal Solvers}  \label{sec:active_solvers}
In this section, we consider existing solvers for solving the primal subproblem at each outer iteration $k \in \NN_{0}$ of \Cref{alg:mescal-overview} and producing iterates $x_k$ that satisfy the inexactness conditions specified therein. 
For the general setting considered in this paper, we are unaware of stochastic second-order solvers with theoretical complexity guarantees for obtaining iterates that satisfy expectation conditions~\eqref{cond:exp_primal_prob} or probabilistic conditions~\eqref{cond:prob_primal_prob}. However, theoretical complexity results are available for several stochastic second-order solvers under more specialized settings. We consider three such solvers: SG-HV-NC \cite[Algorithm 4]{arjevani_secondorder_2020}, Natasha2 \cite{allen-zhu_natasha_2018}, and SPIDER \cite{fang_spider_2018}. We describe their implementation within the MESCAL framework and establish the corresponding sample complexity results. For these three solvers, the existing theoretical results are established under two restrictions: (i) the subproblem is unconstrained, i.e., $\cX=\RR^d$, and (ii) the solver obtains an $(\epsilong_k,\epsilonH_k)$-accurate iterate satisfying the probabilistic conditions in  \eqref{cond:prob_primal_prob}. Therefore, our sample complexity results for these solvers establish the complexity of obtaining an $(\epsilong, \epsilonH)$-accurate $p$-probabilistic second-order stationary point $\tilde{x}$ satisfying \eqref{eq:opt_cond_prob} under the restriction $\cX=\RR^d$. However, we note that the theoretical analysis framework established in \Cref{sec:outer_loop} and \Cref{sec:inner_loop} ensures that any new second-order solvers, that can handle general closed and convex constraint sets $\cX$ and obtain iterates satisfying \eqref{cond:exp_primal_prob}, can be analyzed within the MESCAL framework to establish the corresponding sample complexity results.

The methods considered share a common framework in that they employ two distinct types of steps to update the primal iterate toward an $(\epsilong, \epsilonH)$-accurate point. The first is a gradient-based first-order step that leverages variance-reduction techniques to control the error in the gradient estimator. The second is a negative-curvature-based second-order step involving movement along directions of negative curvature, i.e., directions along which the Hessian exhibits sufficiently negative curvature.
Although all three methods employ randomized negative-curvature steps, they differ in their negative-curvature detection procedures, the rule used to select between a first-order and a second-order step, and the variance-reduction technique used to construct the stochastic gradient estimators.
In the following subsections, we describe these algorithms and establish the corresponding sample complexity results for their use within the MESCAL framework.

%% file: 4-SGD-NCS-RVR.tex
\subsection{Primal Solver: SG-HV-NC} \label{subsec:sgd_ncs}

The SG-HV-NC (stochastic gradient method with variance reduction using Hessian vector products and negative curvature steps) \cite[Algorithm 4]{arjevani_secondorder_2020} is a second-order solver that alternates between stochastic gradient (SG) steps and negative-curvature %
steps. We implement this solver as the primal solver $\cM$ under \textbf{Option II} of \Cref{alg:mescal-overview}, with the step-by-step implementation provided in \Cref{alg:sgd-ncs}. At each inner iteration $t \in \NN$, a Bernoulli random variable determines whether an SG step or a negative-curvature step is performed. The SG steps use the variance-reduced gradient estimator HVP-RVR-GE (Hessian vector product-based recursive variance reduced gradient estimator), described in \Cref{alg:sgd-hvp}, while the negative-curvature steps use Oja's algorithm, as described in \cite{allen-zhu_natasha_2018}, to efficiently identify a direction of negative curvature. For it's implementation in \Cref{step:sgd_oja} of \Cref{alg:sgd-ncs}, we refer to \cite[Lemma 16]{arjevani_secondorder_2020}. Further details and theoretical analysis of SG-HV-NC and Oja's algorithm can be found in \cite{allen-zhu_natasha_2018, allen-zhu_follow_2017, arjevani_secondorder_2020, shamir_convergence_2016}.

We next state the input-output guarantee of the Oja's algorithm that we use in our analysis. In particular, given a point $x$ and a stochastic function class $h$, Oja either returns a certificate that the Hessian at $x$ has no sufficiently negative curvature or returns a unit-norm direction of sufficiently negative curvature with some probability $1 - p_f$.

\begin{definition} \label{def:oja}
    The Oja's algorithm takes as input a point $x \in \RR^d$; a stochastic function class $h$ satisfying 1) a finite initial optimality gap, 2) Lipschitz continuous stochastic gradient, 3) Lipschitz continuous Hessian, 4) stochastic gradient with a uniform error bound in expectation, and 5) stochastic Hessian with a uniform error bound almost surely; a precision parameter $\epsilonH > 0$; and a failure probability $p_f \in (0, 1)$. It outputs $u \in \RR^d \cup \{ \perp \}$ such that, with probability at least $1 - p_f$, either
    \begin{enumerate}
        \item $u = \perp$, and $\sigma_{\min} (\nabla^2 h(x)) \geq -2 \epsilonH $;
        \item $u \neq \perp$, $\|u\|=1$ and $u^\top \nabla^2 h(x) u \leq -\epsilonH$.
    \end{enumerate}
\end{definition}

\begin{algorithm2e}[H]
    \caption{SG-HV-NC}\label{alg:sgd-ncs}
    \DontPrintSemicolon
    
    \textbf{Input (for all $k \in \NN_0$):} function class $\cL_{\alpha_k}$; iterate $x_{k-1}$; dual variable $\lambda_k$; optimality gap $\Delta_k$; smoothness constants $L^{(g)}_{\cL, k}$, $L^{(H)}_{\cL, k}$; variance constants $\sigma^{(g)}_{\cL, k}$, $\sigma^{(H)}_{\cL, k}$; primal subproblem tolerances $\epsilong_k$, $\epsilonH_k > 0$ \;
    
    set $\eta_k = \min \lt\{ \tfrac{\epsilonH_k}{L^{(H)}_{\cL, k}}, \tfrac{1}{2\sqrt{(L^{(g)}_{\cL, k})^2 + (\sigma^{(H)}_{\cL, k})^2 + \epsilong_k L^{(H)}_{\cL, k}}} \rt\}$, $T_k = \lt\lceil \tfrac{20\Delta_k (L^{(H)}_{\cL, k})^2}{(\epsilonH_k)^3} + \tfrac{2\Delta_k}{\eta_k (\epsilong_k)^2} \rt\rceil$, $q_k = \tfrac{(\epsilonH_k)^3}{(\epsilonH_k)^3 + 10\Delta_k (L^{(H)}_{\cL, k})^2 \eta_k (\epsilong_k)^2}$, $\delta_k = \tfrac{\epsilonH_k}{40^2 L^{(H)}_{\cL, k}}$ \;
    
    set $b^{(g)}_k = \min\lt\{1, \tfrac{\eta_k \sqrt{(\sigma^{(H)}_{\cL, k})^2 + \epsilong_k L^{(H)}_{\cL, k}}}{\sigma^{(g)}_{\cL, k}} \rt\}$ and $b^{(H)}_k = \min\lt\{1, \tfrac{\epsilonH_k\sqrt{(\sigma^{(H)}_{\cL, k})^2 + \epsilong_k L^{(H)}_{\cL, k}}}{\sigma^{(g)}_{\cL, k} L^{(H)}_{\cL, k}} \rt\}$ \;
    
    initialize $x_{k, 0} = x_{k - 1}$, $x_{k, 1} = x_{k - 1}$ \; 
    initialize $g_{k, 1} = \text{HVP-RVR-GE}(x_{k, 1}, x_{k, 0}, \emptyset, b^{(g)}_k, \epsilong_k, L^{(H)}_{\cL, k}, \sigma^{(g)}_{\cL, k}, \sigma^{(H)}_{\cL, k}, \lambda_k, \cL_{\alpha_k})$ \;
    
    \For{$t = 1$ \KwTo $T_k$}{
        sample $Q_{1, k, t} \sim \text{Bernoulli}(q_k)$ \;
        \eIf{$Q_{1, k, t} = 1$}{
            set $x_{k, t+1} = x_{k, t} - \eta_k g_{k, t}$ \tcp*{first-order step}
            set $g_{k, t+1} = \text{HVP-RVR-GE}(x_{k, t+1}, x_{k, t}, g_{k, t}, b^{(g)}_k, \epsilong_k, L^{(H)}_{\cL, k}, \sigma^{(g)}_{\cL, k}, \sigma^{(H)}_{\cL, k}, \lambda_k, \cL_{\alpha_k})$ \;
        }{
            set $u_{k, t} = \text{Oja}(x_{k, t}, \cL_{\alpha_k}(\cdot, \lambda_k), 2\epsilonH_k, \delta_k)$ \label{step:sgd_oja} \tcp*{negative curvature search}
            \eIf{$u_{k, t} = \perp$}{
                set $x_{k, t+1} = x_{k, t}$ \;
                set $g_{k, t+1} = g_{k, t}$ \;
            }{
                sample $r_{k, t} \sim \text{Uniform}(\{-1, 1\})$ \;%
                set $x_{k, t+1} = x_{k, t} + \tfrac{\epsilonH_k r_{k, t}}{L^{(H)}_{\cL, k}} u_{k, t}$ \tcp*{second-order step}%
                set $g_{k, t+1} = \text{HVP-RVR-GE}(x_{k, t+1}, x_{k, t}, g_{k, t}, b^{(H)}_k, \epsilong_k, L^{(H)}_{\cL, k}, \sigma^{(g)}_{\cL, k}, \sigma^{(H)}_{\cL, k}, \lambda_k, \cL_{\alpha_k})$ \;
            }
        }
    }
    sample $x_k \sim \mathrm{Uniform}(\{x_{k, t}\}_{t=1}^{T_k})$
    
    \Return $x_{k}$ \;
\end{algorithm2e}

\begin{algorithm2e}[H]
    \caption{HVP-RVR-GE} \label{alg:sgd-hvp}
    \DontPrintSemicolon
    \textbf{Input:} Iterates $\hat{x}_1, \hat{x}_0$; gradient estimators $g_0$; estimator selection probability $b$; error tolerance $\epsilon$; smoothness constant $L^{(H)}$; variance constants $\sigma^{(g)}$, $\sigma^{(H)}$; dual variable $\lambda$; function class $\cL_{\alpha}$ \;
    set $J = \lt\lceil\tfrac{5 ((\sigma^{(H)})^2 + L^{(H)} \epsilon)}{b \epsilon^2} \|\hat{x}_1 - \hat{x}_0 \|^2 \rt\rceil$, $B = \lt\lceil\tfrac{5 (\sigma^{(g)})^2}{\epsilon^2}\rt\rceil$ \; 
    
    sample $Q_2 \sim \text{Bernoulli}(b)$ \; 
    \If{$Q_2=1$ $\boldsymbol{\mathrm{or}}$ $g_0 = \emptyset$}{
        sample $\theta_i \sim \cP_\theta, \quad \forall i \in \{1, 2, \cdots, B\}$ \;
        set $g_1 = \tfrac{\sum_{\theta_i=1}^B \nabla \bar{\cL}_{\alpha} (\hat{x}_1, \lambda; \theta)}{B}$
    }
    \Else{
        set $\overline{x}_j = \tfrac{j}{J} \hat{x}_1 + \lt( 1 - \tfrac{j}{J} \rt) \hat{x}_0, \quad \forall j \in \{0, 1, \cdots, J\}$ \; 
        sample $\theta_j \sim \cP_\theta, \quad \forall j \in \{0, 1, \cdots, J\}$ \;
        set $g_1 = g_0 + \sum_{j=1}^{J} \nabla^2 \bar{\cL}_{\alpha} (\overline{x}_{j-1}, \lambda; \theta_j)(\overline{x}_{j}-\overline{x}_{j-1})$ \; 
    }
    \Return $g_1$
\end{algorithm2e}

\begin{remark}
    Second-order stochastic optimization algorithms typically involve several parameters. For \Cref{alg:sgd-ncs}, we briefly describe the roles of the main parameters: 
    \begin{itemize}
        \item $\eta_k$: The step size used for the first-order %
        steps.
        
        \item $T_k$: The prescribed number of inner iterations performed by the SG-HV-NC solver. 
        
        \item $q_k$: The probability parameter of the Bernoulli random variable $Q_{1,k,t}$, which determines whether a first-order %
        step ($Q_{1,k,t}=1$) or a negative-cuvature step ($Q_{1,k,t}=0$) is performed at inner iteration $t$.
        
        \item $\delta_k$: The failure probability parameter for Oja's algorithm which controls the probability with which the algorithm correctly identifies a direction of negative curvature or certifies the absence of sufficiently negative curvature.

        \item $b^{(g)}_k$: The probability 
        parameter of Bernoulli random variable $Q_2$ in \Cref{alg:sgd-hvp} when HVP-RVR-GE is called during a first-order step. It determines whether a subsampled gradient estimator or a recursive variance-reduced gradient estimator is used.
        \item $b^{(H)}_k$: The probability parameter of Bernoulli random variable $Q_2$ in \Cref{alg:sgd-hvp} when HVP-RVR-GE is called during a second-order step. It determines whether a subsampled gradient estimator or a recursive variance-reduced gradient estimator is used.
    \end{itemize}
    The values of these parameters are specified in \Cref{alg:sgd-ncs} to ensure that the resulting iterate $x_k$ is an $(\epsilong_k,\epsilonH_k)$-accurate iterate satisfying the probabilistic conditions in \eqref{cond:prob_primal_prob} at each outer iteration $k \in \NN$.
\end{remark}

We now state the work complexity at each iteration $k \in \NN_0$, as defined in \Cref{def:primal_work_at_k}, of the SG-HV-NC algorithm, denoted by $\cW^{(p)}_{SG, k}$, when used as the primal solver in \Cref{alg:mescal-overview}.

\begin{lemma}[Work complexity of SG-HV-NC]%
\label{lemm:iter-sample-complexity-sgd-ncs}
    Suppose $\cX = \RR^d$ and Assumptions \ref{assn:abs-true-bounds}, \ref{assn:reg-c-nabla-c}, \ref{assn:lip-nabla-f-nabla-c}, \ref{assn:lip-nabla2-f-nabla2-c}, and \ref{assn:var-bounds} hold. Let \Cref{alg:sgd-ncs} be the solver $\cM$ under \textbf{Option II}, with terms $L^{(g)}_{\cL, k}$, $L^{(H)}_{\cL, k}$, $\sigma^{(g)}_{\cL, k}$, $\sigma^{(H)}_{\cL, k}$, and $\Delta_k$ defined as per \Cref{def:complexity_factors}. If the primal subproblem tolerance inputs satisfy $\epsilong_k \leq \min \{1, \sigma^{(g)}_{\cL, k}, (\Delta_k L^{(g)}_{\cL, k})^{0.5} \}$, and $\epsilonH_k \leq \min \{1, \sigma^{(H)}_{\cL, k}, \allowbreak L^{(g)}_{\cL, k}, (\epsilong_k L^{(H)}_{\cL, k})^{0.5}\}$, then, at every iteration $k\in\NN_0$, \Cref{alg:sgd-ncs} returns an iterate $x_k$ satisfying the probabilistic conditions~\eqref{cond:prob_primal_prob} with probability at least $p_k = 5/8$, with work complexity   
    \begin{align*}
        \cW^{(p)}_{SG, k} = %
        \tilde{\cO} \lt( 
        \tfrac{\Delta_k \sigma^{(g)}_{\cL, k} \sigma^{(H)}_{\cL, k}}{(\epsilong_k)^3} + 
        \tfrac{\Delta_k L^{(H)}_{\cL, k} \sigma^{(g)}_{\cL, k} \sigma^{(H)}_{\cL, k}}{(\epsilonH_k)^2(\epsilong_k)^2} + 
        \tfrac{\Delta_k (L^{(H)}_{\cL, k})^2 (\sigma^{(H)}_{\cL, k} + L^{(g)}_{\cL, k})^2}{(\epsilonH_k)^5} + 
        \tfrac{\Delta_k L^{(g)}_{\cL, k}}{(\epsilong_k)^2}
        \rt).
    \end{align*}
\end{lemma}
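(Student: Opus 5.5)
The plan is to reduce the statement to the existing guarantee for SG-HV-NC in \cite[Algorithm 4 and its main theorem]{arjevani_secondorder_2020}, applied at each outer iteration $k$ to the unconstrained subproblem $\min_{x\in\RR^d}\cL_{\alpha_k}(x,\lambda_k)$ with starting point $x_{k-1}$. That theorem requires five properties of the stochastic function class, which are exactly the hypotheses listed in \Cref{def:oja}. I will verify them one by one, using the constants of \Cref{def:complexity_factors}.

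\textbf{Verifying the hypotheses.} First, by the proof of \Cref{thm:outer-iteration-complexity}, $\|\lambda_k\|\le\Lambda$ for all $k$, so the technical lemmas apply. Then:
\begin{itemize}
\item the initial optimality gap $\Delta_k$ is finite by \eqref{eq:det_delta_bnd} of \Cref{lemm:1-lower-bound-finite};
\item the stochastic gradient $\nabla\bar\cL_{\alpha_k}(\cdot,\lambda_k;\theta)$ is $L^{(g)}_{\cL,k}$-Lipschitz almost surely by \Cref{lemm:2-lipschitz-grad-aug-lag};
\item the stochastic Hessian is $L^{(H)}_{\cL,k}$-Lipschitz almost surely by \Cref{lemm:3-lipschitz-hess-aug-lag}, and averaging gives the same constant for the true Hessian;
\item the gradient error is uniformly bounded by $\sigma^{(g)}_{\cL,k}$ almost surely by \Cref{lemm:4-finite-bounded-variance}, which implies the bound in expectation;
\item the Hessian error is bounded by $\sigma^{(H)}_{\cL,k}$ almost surely by \Cref{lemm:5-finite-hess-variance}.
\end{itemize}
Unbiasedness of \eqref{eq:sample_gradient_estimator} and \eqref{eq:sample_hessian_estimator} holds by the independence of $\zetaone$ and $\zetatwo$. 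The parameter choices in \Cref{alg:sgd-ncs} and \Cref{alg:sgd-hvp} are, by construction, the ones of \cite{arjevani_secondorder_2020} with these constants substituted. The tolerance restrictions $\epsilong_k\le\min\{1,\sigma^{(g)}_{\cL,k},(\Delta_kL^{(g)}_{\cL,k})^{1/2}\}$ and the analogous ones on $\epsilonH_k$ are precisely the regime conditions under which that theorem's bound is stated. The cited result then says that the uniformly sampled output $x_k$ satisfies $\|\nabla\cL_{\alpha_k}(x_k,\lambda_k)\|\le\epsilong_k$ and $\sigma_{\min}(\nabla^2\cL_{\alpha_k}(x_k,\lambda_k))\ge-\epsilonH_k$ with constant probability, which I expect to be $5/8$. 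Since $\cX=\RR^d$, we have $T_\cX(x)=\RR^d$ and the projection is the identity, so this is exactly \eqref{cond:prob_primal_prob} conditional on $\cF_k$. The conditioning is legitimate because all randomness inside the solver is fresh given $\cF_k$.

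\textbf{Counting the work.} I will bound the expected oracle calls directly and then take the high-probability/$\tilde\cO$ form from the reference. There are $T_k=\cO(\Delta_k(L^{(H)})^2/(\epsilonH)^3+\Delta_k/(\eta_k(\epsilong)^2))$ iterations, each calling one of three procedures:
\begin{itemize}
\item an Oja call, costing $\tilde\cO((\sigma^{(H)}+L^{(g)})^2/(\epsilonH)^2)$ HVPs and made with probability $1-q_k$;
\item a fresh minibatch of size $B=\cO((\sigma^{(g)})^2/(\epsilong)^2)$, drawn with probability $b$;
\item a recursive HVP correction of length $J$, with $\|\hat x_1-\hat x_0\|$ equal to $\eta_k\|g\|$ or $\epsilonH/L^{(H)}$.
\end{itemize}
Multiplying these costs by the step probabilities, then substituting $\eta_k$, $b^{(g)}_k$, $b^{(H)}_k$, and using the tolerance restrictions to simplify the $\min$ expressions, should produce the four terms. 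Specifically:
\begin{itemize}
\item the first-order minibatch and recursion terms give $\Delta_k\sigma^{(g)}\sigma^{(H)}/(\epsilong)^3$ together with $\Delta_kL^{(g)}/(\epsilong)^2$ from $1/\eta_k$;
\item the second-order recursion terms give $\Delta_kL^{(H)}\sigma^{(g)}\sigma^{(H)}/((\epsilonH)^2(\epsilong)^2)$;
\item the Oja calls give $\Delta_k(L^{(H)})^2(\sigma^{(H)}+L^{(g)})^2/(\epsilonH)^5$.
\end{itemize}

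\textbf{Main obstacle.} The hardest step is bounding $J$ in first-order steps, since it depends on the random $\|g_{k,t}\|^2$. I would handle it as the reference does: bound $\EE\|g_{k,t}\|^2$ via the descent inequality summed over $t$, which gives $\sum_t\eta_k\EE\|g_{k,t}\|^2\lesssim\Delta_k+T_k\eta_k(\epsilong)^2$. Simply quoting \cite{arjevani_secondorder_2020} with explicit constants is the fallback. Throughout, I need to make sure each constant appears with the exponent claimed; the $\sqrt{(\sigma^{(H)})^2+\epsilong L^{(H)}}$ factors collapse to $\cO(\sigma^{(H)})$ because $\epsilonH\le(\epsilong L^{(H)})^{1/2}$ and $\epsilonH\le\sigma^{(H)}$.
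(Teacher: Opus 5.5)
Your proposal is essentially the paper's proof: the paper also checks that the oracle \eqref{eq:sample_gradient_estimator}--\eqref{eq:sample_hessian_estimator} meets the hypotheses of \cite[Theorem 5]{arjevani_secondorder_2020}, and then reads off both the $5/8$ success probability and the work bound from that theorem without recounting oracle calls. The hypotheses it checks are the finite gap via \eqref{eq:opt_gap_general}, the almost-sure Lipschitz and error bounds of \Cref{lemm:2-lipschitz-grad-aug-lag,lemm:3-lipschitz-hess-aug-lag,lemm:4-finite-bounded-variance,lemm:5-finite-hess-variance} (with Jensen for the mean-level versions), and the tolerance regime.

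Your extra recount is unnecessary, and as sketched it would not close, so quoting the theorem, as in your fallback, is the right route. With $b^{(g)}_k$ exactly as printed in \Cref{alg:sgd-ncs} (no multiplicative $\epsilong_k$), the fresh-minibatch part of the first-order steps alone already costs order $(\epsilong_k)^{-4}$. Separately, $\epsilonH_k\le\sigma^{(H)}_{\cL,k}$ and $\epsilonH_k\le(\epsilong_k L^{(H)}_{\cL,k})^{1/2}$ only bound $\epsilonH_k$ from above, so they do not justify replacing $\sqrt{(\sigma^{(H)}_{\cL,k})^2+\epsilong_k L^{(H)}_{\cL,k}}$ by $\cO(\sigma^{(H)}_{\cL,k})$.
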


The proof of this lemma is provided in \Cref{sec:appendix_lemm:sgd}. We now provide the sample complexity of the MESCAL framework with SG-HV-NC as the primal solver $\cM$.

\begin{theorem}[{Sample Complexity of MESCAL-SG-HV-NC}]%
\label{thm:mescal_sgd}
    Suppose {$\cX = \RR^d$}, and Assumptions \ref{assn:abs-true-bounds}, \ref{assn:reg-c-nabla-c}, \ref{assn:lip-nabla-f-nabla-c}, \ref{assn:lip-nabla2-f-nabla2-c} {and} \ref{assn:var-bounds} hold. %
    Let {$\{x_k\}$ be the sequence of iterates generated by \Cref{alg:mescal-overview} with }initial iterate $x_{-1} \in \cX$ and initial dual iterate $\lambda_0 \in \RR^m$. Suppose that the dual bound parameters $\gamma_k > 0$ be a bounded sequence satisfying $\sum_{k=0}^{\infty} \gamma_k = C_\gamma < \infty$, and that the dual sample sizes satisfy $|S^{(d)}_k| = \Theta((\epsilong)^{-2})$ for all $k \in \NN_0$. {Let} \Cref{alg:sgd-ncs} %
    {be the }solver $\cM$ under \textbf{Option II}. Suppose {the} primal subproblem tolerances $\epsilong_k$, $\epsilonH_k$ satisfy $\epsilong_k = \epsilong \leq \min \{1, \sigma^{(g)}_{\cL, k}, (\Delta_k L^{(g)}_{\cL, k})^{0.5} \}$ and $\epsilonH_k = \epsilonH \leq \min \{1, \sigma^{(H)}_{\cL, k}, \allowbreak L^{(g)}_{\cL, k}, (\epsilong_k L^{(H)}_{\cL, k})^{0.5}\}$ for all $k \in \NN_0$.
    Then, %
    {for any }$k = K \geq K^*$, where $K^*$ is %
    {defined in \eqref{eq:iter_comp_log}}, the iterate $\tilde{x}:= x_K$ is an $(\epsilong, \epsilonH)$-accurate $p$-probabilistic second-order stationary point satisfying \eqref{eq:opt_cond_prob} as per \Cref{def:opt_cond_prob}, %
    {with }$p=5/8$ %
    {and} $\tilde{\lambda} := \lambda_K + \alpha_K c(x_K)$. {Furthermore}, %
    the sample complexity of the algorithm, as %
    {defined in }\Cref{def:sample_complexity}, to obtain $\tilde{x}:= x_K$, is
    \begin{enumerate}[label=(\roman*)]
        \item \textbf{(Deterministic {Sample} Complexity):} 
        $$\cW = \tilde{\cO}( ( \epsilong )^{-6} ( \epsilonH )^{-2} + ( \epsilong )^{-5} ( \epsilonH )^{-5} ).$$

        \item \textbf{(Probabilistic {Sample} Complexity):}  
        $$\cW = \tilde{\cO}( ( \epsilong )^{-5} ( \epsilonH )^{-2} + ( \epsilong )^{-4} ( \epsilonH )^{-5})$$
        with probability at least $p^K$.
    \end{enumerate}
\end{theorem}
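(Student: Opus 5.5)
The plan mirrors the proof of \Cref{thm:theoretical-solver}(ii), with \Cref{lemm:iter-sample-complexity-sgd-ncs} in place of the generic \Cref{assn:theoretical-sample-size}.

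\emph{Stationarity guarantee.} By \Cref{lemm:iter-sample-complexity-sgd-ncs}, at every $k$ the returned $x_k$ satisfies \eqref{cond:prob_primal_prob} with $p_k=5/8$ and tolerances $\epsilong_k=\epsilong$, $\epsilonH_k=\epsilonH$. \Cref{thm:outer-iteration-complexity}(ii) with $p=5/8$ then shows that $x_K$, $K\ge K^*$, is an $(\epsilong,\epsilonH)$-accurate $p$-probabilistic second-order stationary point with $\tilde\lambda=\lambda_K+\alpha_Kc(x_K)$. By \Cref{lemm:gen-complexity-solver} it then suffices to bound $\sum_{k=0}^K\cW^{(p)}_{SG,k}$, since the dual work is $\tilde\cO((\epsilong)^{-2})$.

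\emph{Growth of the constants.} By \eqref{eq:aug-constants-complexity-bounds}, each of $L^{(g)}_{\cL,k}$, $L^{(H)}_{\cL,k}$, $\sigma^{(g)}_{\cL,k}$, $\sigma^{(H)}_{\cL,k}$ is $\cO(\alpha_k)$. Substituting into \Cref{lemm:iter-sample-complexity-sgd-ncs} gives
\begin{align*}
\cW^{(p)}_{SG,k}=\tilde\cO\Big(\Delta_k\Big[\tfrac{\alpha_k^2}{(\epsilong)^3}+\tfrac{\alpha_k^3}{(\epsilonH)^2(\epsilong)^2}+\tfrac{\alpha_k^4}{(\epsilonH)^5}+\tfrac{\alpha_k}{(\epsilong)^2}\Big]\Big).
\end{align*}
Since $\alpha_k=\alpha_0\beta^k$, the sum over $k\le K$ is geometric and is dominated by its last term, up to a constant depending on $\beta$. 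Taking $K=K^*$ (or $K^*$ plus a constant, which changes nothing) gives $\alpha_K=\Theta((\epsilong)^{-1})$ by \eqref{eq:iter_comp_log}, and $K=\cO(\log(1/\epsilong))$ is absorbed into $\tilde\cO$.

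\emph{Deterministic bound.} Use $\Delta_k\le\Delta_k^*=\cO(\alpha_k)$ from \eqref{eq:det_delta_bnd}. The bracket then gains one factor of $\alpha_K$, and the four terms become $(\epsilong)^{-6}$, $(\epsilong)^{-6}(\epsilonH)^{-2}$, $(\epsilong)^{-5}(\epsilonH)^{-5}$ and $(\epsilong)^{-4}$. Since $\epsilong,\epsilonH<1$, the first and last are absorbed into the $(\epsilong)^{-6}(\epsilonH)^{-2}$ term, giving (i).

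\emph{Probabilistic bound.} Use \eqref{eq:delta-prob-overall-bound_init} with $p_k=5/8$. On the event $D=\cap_{k=1}^K\{\Delta_k\le\Delta^*\}$, which has probability at least $p^K$, we have $\Delta_k=\cO(1)$ for $k\ge1$. The $k=0$ term uses $\Delta_0^*=\cO(\alpha_0)=\cO(1)$ and is lower order. On $D$ the bracket sums to $\tilde\cO((\epsilong)^{-5}+(\epsilong)^{-5}(\epsilonH)^{-2}+(\epsilong)^{-4}(\epsilonH)^{-5}+(\epsilong)^{-3})$, which reduces to (ii). This holds with probability at least $p^K$, because here $p_k$ is fixed at $5/8$ rather than $p^{1/K}$.

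\emph{Main obstacle.} The delicate step is checking the hypotheses of \Cref{lemm:iter-sample-complexity-sgd-ncs} uniformly in $k$, namely $\epsilong\le(\Delta_kL^{(g)}_{\cL,k})^{0.5}$, $\epsilonH\le(\epsilong L^{(H)}_{\cL,k})^{0.5}$, and so on. These are assumed in the theorem statement, so I would simply invoke them, noting that they become easier to satisfy as $\alpha_k$ grows. A second point to verify is that the $\tilde\cO$ from the Oja failure parameter $\delta_k$ contributes only polylogarithmic factors in $\alpha_k$, and hence in $1/\epsilong$.
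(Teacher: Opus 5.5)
Your proposal is correct and follows the paper's proof essentially step for step. You invoke \Cref{lemm:iter-sample-complexity-sgd-ncs} and \Cref{thm:outer-iteration-complexity} for the $5/8$-probabilistic stationarity, bound the $k=0$ solve separately, and sum the geometric series in $\alpha_k$. For (i) you use $\Delta_k\le\Delta_k^*=\cO(\alpha_k)$, and for (ii) you use $\Delta_k\le\Delta^*$ on the event $D$ of probability at least $(5/8)^K$, with $\beta^K=\cO((\epsilong)^{-1})$ in both cases. Your remark that this last relation requires $K=K^*+\cO(1)$ rather than an arbitrary $K\ge K^*$ is a fair sharpening of what the paper's proof uses implicitly.
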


\begin{proof} 
    From \Cref{lemm:iter-sample-complexity-sgd-ncs}, we establish that the iterate $x_k$ satisfies \eqref{cond:prob_primal_prob} in \Cref{alg:mescal-overview}. {Therefore, }by \Cref{thm:outer-iteration-complexity}, it holds that $\tilde{x} := x_K$ is an $(\epsilong, \epsilonH)$-accurate $p$-probabilistic second-order stationary point %
    {as defined in }\Cref{def:opt_cond_prob}, with $p=5/8$ and %
    $\tilde{\lambda} := \lambda_K + \alpha_K c(x_K)$.

    At iteration $k=0$, using \Cref{lemm:iter-sample-complexity-sgd-ncs}, \eqref{eq:aug-constants-complexity-bounds}, \eqref{eq:det_delta_bnd} in \Cref{lemm:1-lower-bound-finite}, and setting $\epsilong_0 = \epsilong$, $\epsilonH_0 = \epsilonH$, the work complexity of solver $\cM$ is given by
    \begin{align}
        \cW^{(p)}_{SG, 0} &= \tilde{\cO} \lt( 
        \tfrac{\Delta^*_0 \sigma^{(g)}_{\cL, 0} \sigma^{(H)}_{\cL, 0}}{(\epsilong)^3} + 
        \tfrac{\Delta^*_0 L^{(H)}_{\cL, 0} \sigma^{(g)}_{\cL, 0} \sigma^{(H)}_{\cL, 0}}{(\epsilonH)^2(\epsilong)^2} + 
        \tfrac{\Delta^*_0 (L^{(H)}_{\cL, 0})^2 (\sigma^{(H)}_{\cL, 0} + L^{(g)}_{\cL, 0})^2}{(\epsilonH)^5} + 
        \tfrac{\Delta^*_0 L^{(g)}_{\cL, 0}}{(\epsilong)^2}
        \rt) \notag \\
        & = \tilde{\cO} ( (\epsilong)^{-3} + (\epsilong)^{-2} (\epsilonH)^{-2} + (\epsilonH)^{-5} ). \label{eq:0-complexity-sgd}
    \end{align}

    Case (i): {Using \Cref{lemm:iter-sample-complexity-sgd-ncs}, \eqref{eq:aug-constants-complexity-bounds}, \eqref{eq:det_delta_bnd} in \Cref{lemm:1-lower-bound-finite}, and setting $\epsilong_k = \epsilong$, $\epsilonH_k = \epsilonH$, the total work complexity of solver $\cM$ over iterations $k = 1, 2, \cdots, K$  is given by} 
    
    \begin{align}
        \sum_{k=1}^K\cW^{(p)}_{SG, k} &= \sum_{k=1}^K \tilde{\cO} \lt( 
        \tfrac{\Delta_k \sigma^{(g)}_{\cL, k} \sigma^{(H)}_{\cL, k}}{(\epsilong_k)^3} + 
        \tfrac{\Delta_k L^{(H)}_{\cL, k} \sigma^{(g)}_{\cL, k} \sigma^{(H)}_{\cL, k}}{(\epsilonH_k)^2(\epsilong_k)^2} + 
        \tfrac{\Delta_k (L^{(H)}_{\cL, k})^2 (\sigma^{(H)}_{\cL, k} + L^{(g)}_{\cL, k})^2}{(\epsilonH_k)^5} + 
        \tfrac{\Delta_k L^{(g)}_{\cL, k}}{(\epsilong_k)^2}
        \rt) \nonumber \\
        &= \sum_{k=1}^K \tilde{\cO} \lt( \tfrac{\beta^{3k}}{(\epsilong)^3} + \tfrac{\beta^{4k}}{(\epsilonH)^2 (\epsilong)^2} + \tfrac{\beta^{5k}}{(\epsilonH)^5} + \tfrac{\beta^{2k}}{(\epsilong)^2} \rt) \nonumber \\
        &= \; \tilde{\cO} \lt( \tfrac{\beta^{3K}}{(\epsilong)^3} + \tfrac{\beta^{4K}}{(\epsilonH)^2 (\epsilong)^2} + \tfrac{\beta^{5K}}{(\epsilonH)^5} + \tfrac{\beta^{2K}}{(\epsilong)^2} \rt) \nonumber \\
        &= \; \tilde{\cO} \lt( ( \epsilong )^{-6} ( \epsilonH )^{-2} + ( \epsilong )^{-5} ( \epsilonH )^{-5} \rt), \label{eq:1-K-complexity-sgd-det}
    \end{align}
    where the last equality is due to the fact that $\beta^K = \cO((\epsilong)^{-1})$. Combining \eqref{eq:0-complexity-sgd}, \eqref{eq:1-K-complexity-sgd-det}, and \eqref{eq:sample-complexity-bound} yields the result.

     Case (ii): Let $D:=\cap_{k=1}^K \{\Delta_k \leq \Delta^*\}$. From \eqref{eq:delta-prob-overall-bound_init}, we have $\PP(D 
     ) \geq \prod_{k=0}^{K-1}p_k = p^K$. Suppose that event 
     $D$
     holds, following the same approach as in the proof of Case (ii) in \Cref{thm:theoretical-solver}, we have 
    \begin{align*}
       \sum_{k=1}^K \cW^{(p)}_{\cM, k} &=  \tilde{\cO} \lt( \tfrac{\beta^{2K}}{(\epsilong)^3} + \tfrac{\beta^{3K}}{(\epsilonH)^2 (\epsilong)^2} + \tfrac{\beta^{4K}}{(\epsilonH)^5} + \tfrac{\beta^{K}}{(\epsilong)^2} \rt)  = \; \tilde{\cO} \lt( ( \epsilong )^{-5} ( \epsilonH )^{-2} + ( \epsilong )^{-4} ( \epsilonH )^{-5} \rt). 
    \end{align*}
    Therefore, 
    \begin{align}
        \PP\lt( \sum_{k=1}^K \cW^{(p)}_{\cM, k} = \tilde{\cO}((\epsilong)^{-5} (\epsilonH)^{-2} + (\epsilong)^{-4} (\epsilonH)^{-5}) \rt) \geq p^K. \label{eq:1-K-complexity-sgd-prob} 
    \end{align}
    Since the bound in \eqref{eq:0-complexity-sgd} holds almost surely, combining it with \eqref{eq:1-K-complexity-sgd-prob} and \eqref{eq:sample-complexity-bound} yields the result.

\end{proof}

\begin{remark}
The deterministic sample complexity result in \Cref{thm:mescal_sgd}, which uses the deterministic bound on $\Delta_k$ in \eqref{eq:det_delta_bnd}, is worse by a factor of $(\epsilong)^{-1}$ than the probabilistic bound, which instead uses the constant upper bound on $\Delta_k$ that holds probabilistically given in \eqref{eq:prob_delta_bnd}. A limitation of the latter result is that the available theoretical analysis of SG-HV-NC guarantees success only with the fixed probability $p_k=5/8$ at each outer iteration $k$. This differs from \Cref{thm:theoretical-solver}, where the per-iteration success probabilities can be selected as $p_k=p^{1/K}$ to obtain an overall success probability of at least $p$. An extension of the
SG-HV-NC theoretical analysis to an arbitrary prescribed success probability could therefore yield a higher-probability counterpart of the present result. Finally, in the 
nonconvex 
setting, ignoring the $\epsilonH$ factors, the resulting probabilistic sample complexity matches the best-known first-order
complexity bounds for stochastic augmented-Lagrangian methods; see, for example, \cite{li_stochastic_2024}.
\end{remark}

%% file: 4-Natasha2.tex
\subsection{Primal Solver: Natasha2}
Natasha2 \cite{allen-zhu_natasha_2018} is a second-order method that alternates between negative-curvature steps and proximal variance-reduced gradient %
steps to obtain second-order %
{stationary points of }nonconvex unconstrained optimization problems. The step-by-step implementation of this algorithm as solver $\cM$ under \textbf{Option II} in \Cref{alg:mescal-overview}, is provided in \Cref{alg:natasha2}. This is primarily a stochastic solver, i.e., {it typically employs single-sample realizations of $\theta$ rather than a mini-batch within the variance-reduced gradient estimator step (see Line 11 in \Cref{alg:natasha1_5}).}  
Natasha2 utilizes two algorithmic {subroutines. }%
{The first is Oja's algorithm, for which a brief description is provided in~\Cref{subsec:sgd_ncs}. }
The {second} %
is the Natasha1.5 subroutine, {described in \Cref{alg:natasha1_5}}, which is intended to {obtain an $\epsilon$-accurate first-order stationary point. }%
Once these standard iterate updates are completed, %
{Natasha2 } 
utilizes an SGD variant \cite{allen-zhu_how_2018}, denoted SGD3SC, on a strongly convex approximation of the objective function to update the iterates. %
{This additional step is }necessary for {the }theoretical convergence but {is} not important in practice; its usage is established in \cite[Theorem 5.7]{allen-zhu_natasha_2018}. 

We next state the input-output guarantee of SGD3SC that we use in our analysis. Specifically, given a point $x$ and a stochastic function class $h$, SGD3SC generates an iterate with a bounded gradient norm.

\begin{definition}
    The SGD3SC algorithm takes as input a stochastic function class $h$ satisfying 1) strong convexity, 2) finite initial optimality gap, 3) Lipschitz continuous stochastic gradient, 4) Lipschitz continuous Hessian, 5) stochastic gradient with a uniform error bound in expectation. After $T$ iterations, the algorithm outputs $\bar{x} \in \RR^d$ such that $\EE[\|\nabla h(\bar{x}) \|] = \cO(1 / \sqrt{T})$.
\end{definition}

\begin{algorithm2e}[H]
\caption{Natasha2}
\label{alg:natasha2}
\SetKwInOut{Input}{Input}
\DontPrintSemicolon

\textbf{Input (for all $k \in \NN_0$):} function class $\cL_{\alpha_k}$; iterate $x_{k-1}$; dual variable $\lambda_k$; optimality gap $\Delta_k$; smoothness constants $L^{(g)}_{\cL, k}$, $L^{(H)}_{\cL, k}$; variance constant $\sigma^{(g)}_{\cL, k}$; primal subproblem tolerances $\epsilong_k$, $\epsilonH_k > 0$; solver-specific positive constants $c_L$, $c_\iota$, $c_B$, $c_q$, $c_\eta$, $c_N$, $c_{\tilde{L}}$, $c_T$ \;

initialize $x_{k, 0} = x_{k - 1}$ 

\eIf{$L^{(H)}_{\cL, k} \geq \tfrac{L^{(g)}_{\cL, k} \epsilonH_k}{3(\sigma^{(g)}_{\cL, k})^{2/3} (\epsilong_k)^{1/3}}$}{
    set $\tilde{L}_k = \max \lt( L^{(g)}_{\cL, k}, \tfrac{c_L L^{(H)}_{\cL, k} \sqrt[3]{(\sigma^{(g)}_{\cL, k})^{2} \epsilong_k}}{\epsilonH_k} \rt)$, $\iota_k = \tilde{L}_k$  \;
    
}{
    set $\tilde{L}_k = L^{(g)}_{\cL, k}$, $\iota_k = \min \lt(L^{(g)}_{\cL, k}, \max \lt\{ \tfrac{\epsilonH_k}{3}, \tfrac{c_\iota (\sigma^{(g)}_{\cL, k})^2 (\epsilong_k) (L^{(H)}_{\cL, k})^3}{(L^{(g)}_{\cL, k})^2 (\epsilonH_k)^3}, \tfrac{c_\iota (\epsilong_k) L^{(g)}_{\cL, k}}{\sigma^{(g)}_{\cL, k}} \rt\} \rt)$ 
    
}
set $B_k = \lt\lceil \tfrac{c_B (\sigma^{(g)}_{\cL, k})^2}{(\epsilong_k)^2} \rt\rceil$, $q_k = \lt\lceil c_q \lt( \tfrac{ B_k \iota_k^2}{\tilde{L}_k^2} \rt)^{1/3} \rt\rceil$, $\eta_k = \tfrac{c_\eta \iota_k}{q_k^2 \tilde{L}_k^2} $, $N_{k, 1} = \lt\lceil \tfrac{c_N \iota_k \Delta_k}{q_k (\epsilong_k)^2} \rt\rceil $, $t_N=0$  \; 

\For{$t = 0$ \KwTo $\infty$}{
    set $v = \text{Oja}\lt( x_{k, t}, \cL_{\alpha_k}(\cdot, \lambda_k), \tfrac{\epsilonH_k}{3}, \tfrac{1}{12} \rt)$  \tcp*{negative curvature search}
    
    \eIf{$v \neq \perp$}{
        sample $r_{k, t} \sim \mathrm{Uniform}(\{-1, 1\})$  \;
        set $x_{k, t+1} = x_{k, t} + \tfrac{r_{k, t} \epsilonH_k}{3 L^{(H)}_{\cL, k}} v $  \tcp*{second-order step}
    }{
        update $t_N \gets t_N + 1$  \;
        let $V_{k, t}(y) := \cL_{\alpha_k}(y, \lambda_k) + L^{(g)}_{\cL, k} \Big(\max \big\{0, \|y - x_{k, t}\| - \tfrac{\epsilonH_k}{3 L^{(H)}_{\cL, k}} \big\} \Big)^2$  \;
        set $(\hat{x}_{k, t}, x_{k, t+1}) = \text{Natasha1.5} (V_{k, t}, x_{k, t}, B_k, q_k, 1, \eta_k, L^{(g)}_{\cL, k})$  \tcp*{first-order step}
        store the vector pair $(x_{k, t}, \hat{x}_{k, t})$  \;
        go to \Cref{step:nat-outside-for} if $t_N = N_{k, 1}$  \;
    }
}
sample $(x, \hat{x}) \sim \mathrm{Uniform} (\{(x_{k, t}, \hat{x}_{k, t})\}_{t = 0}^{N_{k, 1}})$  \label{step:nat-outside-for} \;
let $W(y) := \cL_{\alpha_k}(y, \lambda_k) + L^{(g)}_{\cL, k} \Big(\max \big\{0, \|y - x\| - \tfrac{\epsilonH_k}{3 L^{(H)}_{\cL, k}}  \big\} \Big)^2 + \iota_k \| y - \hat{x}\|^2$  \;

set $y^{\text{out}} = \mathrm{SGD3SC} \lt( W, \hat{x}, \iota_k, c_{\tilde{L}} \tilde{L}_k, \tfrac{c_T(\sigma^{(g)}_{\cL, k})^2}{(\epsilong_k)^2} \log^3(\tfrac{\tilde{L}_k}{\iota_k}) \rt)$  \;
\Return $y^{\text{out}}$ 
\end{algorithm2e}

\begin{algorithm2e}[H]
\caption{$\text{Natasha1.5}$}
\label{alg:natasha1_5}
\SetKwInOut{Input}{Input}
\DontPrintSemicolon
\DontPrintSemicolon

\Input{function $V(\cdot)$, starting vector $y^\emptyset$, epoch length $B$, sub-epoch count $q$, epoch count $T' \geq 1$, learning rate $\eta > 0$, Lipschitz constant $L^{(g)}$ }

initialize $\hat{y}_{1, 1} = y^\emptyset$  \; 
set $B' = \lt\lceil B/q \rt\rceil$  \;

\For{$t' = 1$ \KwTo $T'$}{
    sample $\theta_j \sim \cP_\theta$ for $j \in \{0, 1, \cdots, B\}$  \;
    set $\tilde{y} = \hat{y}_{(t', 1)}$, $\mu = \tfrac{1}{B} \sum_{\theta_j = 0}^B \nabla V(\tilde{y}, \theta)$  \;
    \For{$t_q = 1$ \KwTo $q$}{
        initialize $y_0 = \hat{y}_{(t', t_q)}$  \;
        store the vector $\hat{y}_{(t', t_q)}$  \;
        \For{$b' = 0$ to $B' - 1$}{
            sample $\theta \sim \cP_\theta$  \;
            set $g = \nabla V (y_{b'}, \theta) - \nabla V (\tilde{y}, \theta) + \mu + 2 L^{(g)} (y_{b'} - \hat{y}_{(t', t_q)})$  \;
            $y_{b'+1} = y_{b'} - \eta g$ 
        }
        sample $\hat{y}_{(t', t_q + 1)} \sim \mathrm{Uniform}(\{y_0, y_1, \cdots, y_{B'}\})$  \;
    }
}
sample $\hat{x} \sim \mathrm{Uniform}(\{\hat{y}_{(t',t_q)} \mid 1 \leq t' \leq T', 1 \leq t_q \leq q \})$; set $x^+ = \hat{y}_{(T', q)}$  \;
\Return $(\hat{x}, x^+)$  
\end{algorithm2e}

\begin{remark}
    We briefly describe the roles of the main parameters in \Cref{alg:natasha2}:
    \begin{itemize}
        \item $B_k$: The epoch mini-batch length for the Natasha1.5 subroutine.
        \item $q_k$: The sub-epoch count in the Natasha1.5 subroutine.
        \item $\eta_k$: The step size used to update the iterate in the Natasha1.5 subroutine.
        \item $\iota_k$: The strong convexity parameter utilized for the SGD3SC subroutine.
        \item {$N_{k,1}:$ The number of first-order steps.}
        \item $V_{k, t}$: The approximation of the augmented Lagrangian function.
    \end{itemize}
\end{remark}
These parameters are specified in \Cref{alg:natasha2} to ensure that the resulting iterate $x_k$ is an $(\epsilong_k,\epsilonH_k)$-accurate iterate satisfying the probabilistic conditions in \eqref{cond:prob_primal_prob} at each outer iteration $k \in \NN$.

We now state the work complexity at each iteration $k \in \NN_0$, as defined in \Cref{def:primal_work_at_k}, of the Natasha2 algorithm, denoted by $\cW^{(p)}_{Nat2,k}$, when used as the primal solver in \Cref{alg:mescal-overview}.

\begin{lemma}[Work complexity of Natasha2] \label{lemm:iter-sample-complexity-natasha2}
    Suppose $\cX = \RR^d$, and Assumptions \ref{assn:abs-true-bounds}, \ref{assn:reg-c-nabla-c}, \ref{assn:lip-nabla-f-nabla-c}, \ref{assn:lip-nabla2-f-nabla2-c} and \ref{assn:var-bounds} hold. Let \Cref{alg:natasha2} be the solver $\cM$ under \textbf{Option II}, with terms $L^{(g)}_{\cL, k}$, $L^{(H)}_{\cL, k}$, $\sigma^{(g)}_{\cL, k}$, and $\Delta_k$ defined as per \Cref{def:complexity_factors}. If the primal subproblem tolerance inputs satisfy $\epsilong_k = \cO(\min\{1, \sigma^{(g)}_{\cL, k}\})$, and $\epsilonH_k = \cO \Big(\min \Big\{1, 3 L^{(g)}_{\cL, k}, \allowbreak \Big( \tfrac{(\sigma^{(g)}_{\cL, k})^2 \epsilong_k (L^{(H)}_{\cL, k})^3}{L^{(g)}_{\cL, k}} \Big)^{0.25}  \Big\} \Big)$, then at every iteration $k \in \NN_0$, \Cref{alg:natasha2} returns an iterate $x_k$ satisfying the probabilistic conditions~\eqref{cond:prob_primal_prob} with probability at least $p_k = 2/3$, with work complexity
    \begin{align*}
        \cW^{(p)}_{Nat2, k} = \tilde{\cO} \lt( 
        \tfrac{(\sigma^{(g)}_{\cL, k})^4}{(\epsilong_k)^2}
        + \tfrac{\Delta_k(L^{(H)}_{\cL, k})^2 (L^{(g)}_{\cL, k})^2 }{(\epsilonH_k)^5}
        + \tfrac{\Delta_kL^{(H)}_{\cL, k} (L^{(g)}_{\cL, k})^2}{\epsilong_k (\epsilonH_k)^3}
        + \tfrac{\Delta_kL^{(H)}_{\cL, k} (\sigma^{(g)}_{\cL, k})^2}{(\epsilong_k)^3 \epsilonH_k}
        + \tfrac{\Delta_k(L^{(g)}_{\cL, k})^3}{\epsilong_k (\epsilonH_k)^2 \sigma^{(g)}_{\cL, k}}
        \rt).
    \end{align*}
\end{lemma}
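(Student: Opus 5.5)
The plan is to reduce the statement to the known guarantee for Natasha2 in \cite[Theorem 5.7]{allen-zhu_natasha_2018}, applied at each outer iteration $k$ to the unconstrained subproblem $\min_{x\in\RR^d}\cL_{\alpha_k}(x,\lambda_k)$. The work is mainly to check that this subproblem fits the hypotheses of that theorem and to translate its constants into ours. Concretely, I will use the function class $\cL_{\alpha_k}(\cdot,\lambda_k)$ with stochastic oracle $\nabla\bar{\cL}_{\alpha_k}(\cdot,\lambda_k;\theta)$ from \eqref{eq:sample_gradient_estimator} and Hessian-vector products from \eqref{eq:sample_hessian_estimator}; both are unbiased.

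The hypotheses are verified one by one from earlier results. The initial optimality gap $\Delta_k$ is finite by \Cref{lemm:1-lower-bound-finite}. Almost-sure $L^{(g)}_{\cL,k}$-Lipschitz continuity of the stochastic gradient, which implies the average smoothness Natasha1.5 needs, follows from \Cref{lemm:2-lipschitz-grad-aug-lag}. $L^{(H)}_{\cL,k}$-Lipschitz continuity of the Hessian follows from \Cref{lemm:3-lipschitz-hess-aug-lag}, since $\nabla^2\cL_{\alpha_k}$ is the expectation of the stochastic Hessians. The variance bound $\sigma^{(g)}_{\cL,k}$ comes from \Cref{lemm:4-finite-bounded-variance}, and the a.s.\ Hessian error bound needed by Oja (\Cref{def:oja}) comes from \Cref{lemm:5-finite-hess-variance}. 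These invocations require $\|\lambda_k\|\le\Lambda$, which holds by the first part of the proof of \Cref{thm:outer-iteration-complexity}.

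Next I identify the parameters of \Cref{alg:natasha2} with those of \cite{allen-zhu_natasha_2018}, with the substitutions $L\to L^{(g)}_{\cL,k}$, $L_2\to L^{(H)}_{\cL,k}$, $\mathcal{V}\to(\sigma^{(g)}_{\cL,k})^2$, $\delta\to\epsilonH_k$, $\varepsilon\to\epsilong_k$, and $f(x_0)-\min f\to\Delta_k$. The two branches defining $\tilde L_k$ and $\iota_k$ correspond to the two regimes in their analysis. The tolerance restrictions in the lemma, namely $\epsilong_k=\cO(\min\{1,\sigma^{(g)}_{\cL,k}\})$ and the bound on $\epsilonH_k$, are exactly the conditions used there to simplify the complexity and to make the required quantities positive integers. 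Their theorem then yields an output $y^{\text{out}}$ with $\|\nabla\cL_{\alpha_k}(y^{\text{out}},\lambda_k)\|\le\epsilong_k$ and $\nabla^2\cL_{\alpha_k}(y^{\text{out}},\lambda_k)\succeq-\epsilonH_k I$ with constant probability, which I take to be at least $2/3$. This uses the Oja failure probability $1/12$ per call, a union/Markov bound on the SGD3SC expectation guarantee, and the expectation-to-probability conversion of their Theorem 5.7. Since $\cX=\RR^d$, we have $T_\cX(x)=\RR^d$, so these two inequalities are precisely the two events in \eqref{cond:prob_primal_prob}.

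Finally I count oracle calls, with each stochastic gradient or Hessian-vector product counted as one unit per \Cref{def:primal_work_at_k}. The total work splits into four parts. First, the Natasha1.5 first-order steps: $N_{k,1}$ calls, each costing $\cO(B_k)$ gradients. Second, the negative-curvature steps: their number is bounded by $\cO(\Delta_k (L^{(H)}_{\cL,k})^2/(\epsilonH_k)^3)$ via the descent of $\epsilonH_k^3/(L^{(H)}_{\cL,k})^2$ per step, and each Oja call costs $\tilde{\cO}((L^{(g)}_{\cL,k})^2/(\epsilonH_k)^2)$. Third, the Oja calls preceding first-order steps. Fourth, SGD3SC, which costs $\tilde{\cO}((\sigma^{(g)}_{\cL,k})^2/(\epsilong_k)^2)$. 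Substituting $B_k,q_k,\eta_k,\iota_k,\tilde L_k$ in each branch and taking the maximum should yield the five stated terms; the $(\sigma^{(g)}_{\cL,k})^4/(\epsilong_k)^2$ term absorbs the SGD3SC and initial-batch costs under $\epsilong_k\le\sigma^{(g)}_{\cL,k}$.

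I expect the main obstacle to be this last bookkeeping, specifically tracking the Lipschitz and variance constants explicitly rather than as $\cO(1)$. The original analysis suppresses some of them, so I will have to rederive the branch-dependent bounds on $N_{k,1}B_k$ and on the Oja cost, keeping track of $\iota_k$ in both branches.
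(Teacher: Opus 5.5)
Your proposal is correct and follows essentially the same route as the paper. You verify, via \Cref{lemm:1-lower-bound-finite,lemm:2-lipschitz-grad-aug-lag,lemm:3-lipschitz-hess-aug-lag,lemm:4-finite-bounded-variance} and the tolerance restrictions, that $\cL_{\alpha_k}(\cdot,\lambda_k)$ with the estimators \eqref{eq:sample_gradient_estimator}--\eqref{eq:sample_hessian_estimator} meets the hypotheses of the cited Natasha2 theorem, and then read off the probability-$2/3$ guarantee using $T_\cX(x)=\RR^d$. The one difference is that the paper imports the work bound directly from that theorem instead of redoing the branch-by-branch oracle count, so the bookkeeping you flag as the main obstacle is never carried out there. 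If you do carry it out, note that the number of negative-curvature steps is controlled only in expectation, because of the random sign; the count therefore has to be combined with a Markov bound inside the success event rather than stated deterministically.
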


The proof of this lemma is provided in \Cref{sec:appendix_lemm:nat2}. %
{We now establish} the sample complexity %
{of} MESCAL-Natasha2, i.e., the MESCAL framework utilizing %
Natasha2 {as the solver $\cM$.} %

\begin{theorem}[Sample Complexity of MESCAL-Natasha2] \label{thm:mescal_natasha2}
    Suppose $\cX = \RR^d$, and Assumptions \ref{assn:abs-true-bounds}, \ref{assn:reg-c-nabla-c}, \ref{assn:lip-nabla-f-nabla-c}, \ref{assn:lip-nabla2-f-nabla2-c} and \ref{assn:var-bounds} hold. Let $\{x_k\}$ be the sequence of iterates generated by \Cref{alg:mescal-overview} with initial iterate $x_{-1} \in \cX$ and initial dual iterate $\lambda_0 \in \RR^m$. Suppose that the dual bound parameters $\gamma_k > 0$ be a bounded sequence satisfying $\sum_{k=0}^{\infty} \gamma_k = C_\gamma < \infty$, and that the dual sample sizes satisfy $|S^{(d)}_k| = \Theta((\epsilong)^{-2})$ for all $k \in \NN_0$. Let \Cref{alg:natasha2} be the solver $\cM$ under \textbf{Option II}. Suppose the primal subproblem tolerances $\epsilong_k$, $\epsilonH_k$ satisfy $\epsilong_k = \epsilong = \cO(\min\{1, \sigma^{(g)}_{\cL, k}\})$ and $\epsilonH_k = \epsilonH = \cO \Big(\min \Big\{1, 3 L^{(g)}_{\cL, k}, \allowbreak \Big( \tfrac{(\sigma^{(g)}_{\cL, k})^2 \epsilong_k (L^{(H)}_{\cL, k})^3}{L^{(g)}_{\cL, k}} \Big)^{0.25}  \Big\} \Big)$ for all $k \in \NN_0$. Then, for any $k = K \geq K^*$, where $K^*$ is defined in \eqref{eq:iter_comp_log}, the iterate $\tilde{x}:= x_K$ is an $(\epsilong, \epsilonH)$-accurate $p$-probabilistic second-order stationary point satisfying \eqref{eq:opt_cond_prob} as per \Cref{def:opt_cond_prob} with $p = 2/3$ and $\tilde{\lambda} := \lambda_K + \alpha_K c(x_K)$. Furthermore, the sample complexity of the algorithm as defined in \Cref{def:sample_complexity}, to obtain $\tilde{x}:= x_K$, is
    \begin{enumerate}[label=(\roman*)]
        \item \textbf{(Deterministic Sample Complexity):} 
        $$\cW = \tilde{\cO} ( ( \epsilong )^{-7} ( \epsilonH )^{-1} + ( \epsilong )^{-5} ( \epsilonH )^{-5} ).$$

        \item \textbf{(Probabilistic Sample Complexity):} 
        $$\cW = \tilde{\cO} ( ( \epsilong )^{-6} ( \epsilonH )^{-1} + ( \epsilong )^{-4} ( \epsilonH )^{-5} )$$
        with probability at least $p^K$.
    \end{enumerate}
\end{theorem}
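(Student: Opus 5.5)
The plan is to mirror the proof of \Cref{thm:mescal_sgd}, replacing \Cref{lemm:iter-sample-complexity-sgd-ncs} with \Cref{lemm:iter-sample-complexity-natasha2}.

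\textbf{Stationarity of $x_K$.} By \Cref{lemm:iter-sample-complexity-natasha2}, at every outer iteration the iterate $x_k$ satisfies \eqref{cond:prob_primal_prob} with $p_k=2/3$. Then \Cref{thm:outer-iteration-complexity}(ii) gives that $\tilde{x}:=x_K$, $K\ge K^*$, is an $(\epsilong,\epsilonH)$-accurate $p$-probabilistic second-order stationary point with $p=2/3$ and $\tilde\lambda=\lambda_K+\alpha_K c(x_K)$.

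\textbf{Bounding the work.} \Cref{lemm:gen-complexity-solver} reduces the total work to $\sum_{k=0}^K\cW^{(p)}_{Nat2,k}$ plus an $\tilde{\cO}((\epsilong)^{-2})$ dual term. The $k=0$ term is treated as in \eqref{eq:0-complexity-sgd}: since $\alpha_0$ and $\Delta^*_0$ are constants, it is $\tilde{\cO}$ of a polynomial in $(\epsilong)^{-1},(\epsilonH)^{-1}$ of lower order than the final bound.

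For $k\ge1$, use \eqref{eq:aug-constants-complexity-bounds}: $L^{(g)}_{\cL,k},L^{(H)}_{\cL,k},\sigma^{(g)}_{\cL,k}=\cO(\alpha_k)=\cO(\beta^k)$.

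\emph{Main obstacle.} The fifth term has $\sigma^{(g)}_{\cL,k}$ in the denominator, so an upper bound is not enough. I would take the constant passed to \Cref{alg:natasha2} to be the bound $\sigma^{(g)}_1+\alpha_k\sigma^{(g)}_2$ itself; this is legitimate because it is a valid almost-sure error bound. Then $\sigma^{(g)}_{\cL,k}=\Theta(\alpha_k)$ (assuming $\sigma^{(g)}_2>0$; otherwise it is bounded below by the constant $\sigma^{(g)}_1$, which only helps). That term then scales like $\Delta_k\alpha_k^2$.

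\textbf{Case (i), deterministic.} Use $\Delta_k\le\Delta^*_k=\cO(\alpha_k)$ from \eqref{eq:det_delta_bnd}. The five terms scale as
\[
\frac{\beta^{4k}}{(\epsilong)^2},\quad \frac{\beta^{5k}}{(\epsilonH)^5},\quad \frac{\beta^{4k}}{\epsilong(\epsilonH)^3},\quad \frac{\beta^{4k}}{(\epsilong)^3\epsilonH},\quad \frac{\beta^{3k}}{\epsilong(\epsilonH)^2}.
\]
Sum the geometric series up to $K$ and use $\beta^K=\cO((\epsilong)^{-1})$. This gives
\[
(\epsilong)^{-6},\quad (\epsilong)^{-5}(\epsilonH)^{-5},\quad (\epsilong)^{-5}(\epsilonH)^{-3},\quad (\epsilong)^{-7}(\epsilonH)^{-1},\quad (\epsilong)^{-4}(\epsilonH)^{-2}.
\]
Since $\epsilong,\epsilonH<1$, all of these are dominated by $(\epsilong)^{-7}(\epsilonH)^{-1}+(\epsilong)^{-5}(\epsilonH)^{-5}$. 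The dual and $k=0$ contributions are absorbed as well, which gives (i).

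\textbf{Case (ii), probabilistic.} Let $D:=\cap_{k=1}^K\{\Delta_k\le\Delta^*\}$. By \eqref{eq:delta-prob-overall-bound_init}, $\PP(D)\ge p^K$. On $D$, the four terms carrying $\Delta_k$ each lose one factor of $\beta^k$. The first term $(\sigma^{(g)}_{\cL,k})^4/(\epsilong)^2$ has no $\Delta_k$ and so still contributes $(\epsilong)^{-6}$. The resulting orders are
\[
(\epsilong)^{-6},\quad (\epsilong)^{-4}(\epsilonH)^{-5},\quad (\epsilong)^{-4}(\epsilonH)^{-3},\quad (\epsilong)^{-6}(\epsilonH)^{-1},\quad (\epsilong)^{-3}(\epsilonH)^{-2}.
\]
All are dominated by $(\epsilong)^{-6}(\epsilonH)^{-1}+(\epsilong)^{-4}(\epsilonH)^{-5}$.

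Because the $k=0$ and dual contributions hold almost surely, combining them with this bound on $D$, exactly as in Case (ii) of \Cref{thm:theoretical-solver}, yields the bound with probability at least $p^K$.
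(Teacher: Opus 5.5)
Your proposal is correct and follows the paper's proof essentially step for step: stationarity via \Cref{lemm:iter-sample-complexity-natasha2} and \Cref{thm:outer-iteration-complexity}(ii), a separate $k=0$ term, and the deterministic and probabilistic sums obtained exactly as for MESCAL-SG-HV-NC. Your term-by-term exponents just make explicit what the paper abbreviates as ``the same reasoning,'' and your handling of $\sigma^{(g)}_{\cL,k}$ in a denominator by passing $\sigma^{(g)}_1+\alpha_k\sigma^{(g)}_2$ to \Cref{alg:natasha2} is a worthwhile detail the paper glosses over; the only inaccuracy is the parenthetical, since if $\sigma^{(g)}_2=0$ the fifth term gains an extra factor $\alpha_k$ (giving $(\epsilong)^{-5}(\epsilonH)^{-2}$ and $(\epsilong)^{-4}(\epsilonH)^{-2}$ in the two cases) rather than ``helping,'' though it remains dominated by the stated bounds.
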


\begin{proof}
    From \Cref{lemm:iter-sample-complexity-natasha2}, we establish that the iterate $x_k$ satisfies \eqref{cond:prob_primal_prob} in \Cref{alg:mescal-overview}. Therefore, by \Cref{thm:outer-iteration-complexity}, it holds that the iterate $\tilde{x} := x_K$ is an $(\epsilong, \epsilonH)$-accurate $p$-probabilistic second-order stationary point as defined in \Cref{def:opt_cond_prob}, with $p=2/3$ and where $\tilde{\lambda} := \lambda_K + \alpha_K c(x_K)$.

    Following the same approach as in \eqref{eq:prob-k-zero-primal-work} and \eqref{eq:0-complexity-sgd}, we have
    \begin{align}
        \cW^{(p)}_{Nat2, 0} = \tilde{\cO} \lt( (\epsilonH)^{-5} + (\epsilong)^{-1} (\epsilonH)^{-3} + (\epsilong)^{-3} (\epsilonH)^{-1} \rt). \label{eq:0-complexity-nat2}
    \end{align}

    Case (i): By the same reasoning as in \eqref{eq:1-K-complexity-sgd-det}, we have
    \begin{align}
        \sum_{k=1}^K \cW^{(p)}_{Nat2, k} = \tilde{\cO} \lt( ( \epsilong )^{-7} ( \epsilonH )^{-1} + ( \epsilong )^{-5} ( \epsilonH )^{-5} \rt). \label{eq:1-K-complexity-nat2}
    \end{align}
    Substituting \eqref{eq:0-complexity-nat2} and \eqref{eq:1-K-complexity-nat2} in \eqref{eq:sample-complexity-bound} completes the proof.

    Case (ii): For probabilistic bounds, by following the same approach as that for \eqref{eq:1-K-complexity-sgd-prob}, we have
    \begin{align}
        \PP\lt( \sum_{k=1}^K \cW^{(p)}_{Nat2, k} = \tilde{\cO}((\epsilong)^{-6} (\epsilonH)^{-1} + (\epsilong)^{-4} (\epsilonH)^{-5}) \rt) \geq p^K. \label{eq:1-K-complexity-nat2-prob} 
    \end{align}
    We use \eqref{eq:0-complexity-nat2} and \eqref{eq:1-K-complexity-nat2-prob} to complete the proof, as in \eqref{eq:complexity-general-prob}.

\end{proof}

\begin{remark}

The deterministic sample complexity results %
for MESCAL-Natasha2 and MESCAL-SG-HV-NC differ in one term. In particular, the bound for MESCAL-Natasha2 contains the term $(\epsilong)^{-7}(\epsilonH)^{-1}$, whereas the corresponding term for MESCAL-SG-HV-NC is $(\epsilong)^{-6}(\epsilonH)^{-2}$. The same distinction arises in the $p^K$-probabilistic sample complexity results. %
The bound for MESCAL-Natasha2 contains $(\epsilong)^{-6}(\epsilonH)^{-1}$, while that for MESCAL-SG-HV-NC contains $(\epsilong)^{-5}(\epsilonH)^{-2}$.

\end{remark}

%% file: 4-SPIDER.tex
\subsection{Primal Solver: SPIDER}
SPIDER--SFO+ (stochastic path-integrated differential estimator--stochastic first-order+) \cite[Algorithm 2]{fang_spider_2018}, concisely denoted as SPIDER, {is another second-order algorithm. The step-by-step implementation of this algorithm as the primal solver $\cM$ under \textbf{Option II} in \Cref{alg:mescal-overview}, is provided in \Cref{alg:spider_sfo_plus}.} 
It combines SARAH-based \cite{nguyen_sarah_2017} stochastic gradient tracking with a negative-curvature search procedure to efficiently escape saddle points. 
{Similar to }Natasha2, SPIDER first identifies directions of negative curvature using the Neon2 algorithm, a variant of Oja's algorithm. When a direction of negative curvature is detected, {rather than taking a single large step,} SPIDER performs multiple mini-steps along that direction, allowing {it to continuously update the gradient estimator without recomputing a highly accurate gradient.} If no negative curvature is {detected, it similarly performs multiple first-order updates while continuously updating the gradient estimator.} The algorithm terminates when the norm of the gradient estimator falls below the prescribed threshold.

We next state the input-output guarantee of the Neon2 algorithm{, which is similar to that of Oja's algorithm.} %
For information on the Neon2 procedure, as utilized in \Cref{step:neon2_spider} of \Cref{alg:spider_sfo_plus}, refer to \cite[Theorem 9]{fang_spider_2018}. A more detailed analysis {of Neon2} is provided in \cite{allen-zhu_neon2_2018}.

\begin{definition} \label{def:neon2}
    The Neon2 algorithm takes as input a point $x \in \RR^d$, stochastic function class $h$ that has 1) a finite initial optimality gap, 2) Lipschitz continuous stochastic gradient, 3) Lipschitz continuous stochastic Hessian, 4) stochastic gradient with uniform error bound almost surely, a precision parameter $\epsilonH > 0$ and a failure probability $p_f \in (0, 1)$, and outputs $w \in \RR^d \cup \{ \perp \}$ such that with probability at least $1 - p_f$, either

    \begin{enumerate}
        \item $w = \perp$, $\sigma_{\min} (\nabla ^2 h(x)) \geq -\epsilonH$.

        \item $w \neq \perp$, $\|w\| = 1$, and $w^\top \nabla^2 h(x) w \leq \tfrac{\epsilonH}{2}$.
    \end{enumerate}

\end{definition}

\begin{algorithm2e}[H]
\caption{SPIDER}
\label{alg:spider_sfo_plus}
\SetKwInOut{Input}{Input}
\DontPrintSemicolon

\textbf{Input (for all $k \in \NN_0$):} function class $\cL_{\alpha_k}$; initial iterate $x_{k-1}$; primal subproblem error tolerances $\overline{\epsilon}^{(g)}_k, \epsilonH_k > 0$; optimality gap $\Delta_k$; smoothness constants $L^{(g)}_{\cL, k}$, $L^{(H)}_{\cL, k}$; variance constant $\sigma^{(g)}_{\cL, k}$; primal parameter $\overline{\epsilon}^{(g)}_k$, primal subproblem tolerance $\epsilonH_k > 0$; solver-specific constant  $c_{n_0}$\;

set $c_{n_0} \in [1, 20 \sigma^{(g)}_{\cL, k}/\overline{\epsilon}^{(g)}_k]$, $B^{(p)}_k=\tfrac{20 \sigma^{(g)}_{\cL, k}}{\overline{\epsilon}^{(g)}_k c_{n_0}}$, $B^{\text{reset}}_k = \tfrac{200 (\sigma^{(g)}_{\cL, k})^2}{(\overline{\epsilon}^{(g)}_k)^2}$, $q_k=\tfrac{10 \sigma^{(g)}_{\cL, k} c_{n_0}}{\overline{\epsilon}^{(g)}_k}$, $\eta_k = \min \lt( \tfrac{\overline{\epsilon}^{(g)}_k}{10 L^{(g)}_{\cL, k} c_{n_0} \| g_t \|}, \tfrac{1}{L^{(g)}_{\cL, k} c_{n_0}} \rt), J_k = 4 \lt\lfloor \max \lt( \tfrac{81 L^{(H)}_{\cL, k} \Delta_k}{(\epsilonH_k)^3}, \tfrac{120 \Delta_k L^{(H)}_{\cL, k}}{\epsilonH_k \overline{\epsilon}^{(g)}_k} \rt) \rt\rfloor, \cT_k = \lt\lceil \tfrac{10 \epsilonH_k L^{(g)}_{\cL, k} c_{n_0}}{3 L^{(H)}_{\cL, k} \overline{\epsilon}^{(g)}_k} \rt\rceil$\;

set primal subproblem tolerance $\epsilong_k =  \overline{\epsilon}^{(g)}_k \log(64 (\cT_k J_k + 1))$ \; 

set $t = 0$; initialize $x_{k, 0} = x_{k - 1}$ \;

\For{$j = 0$ \KwTo $J_k$}{
    set $w_1 = \text{Neon2}\lt( \cL_{\alpha_k}(\cdot, \lambda_k), x_{k, t}, 2\epsilonH_k, \tfrac{1}{16J_k} \rt)$ \label{step:neon2_spider} \tcp*{negative curvature search}
    
    \If{$w_1 \neq \perp$}{
        sample $b_j \sim \mathrm{Uniform}(\{-1, 1\})$ \;
    }
    
    \While{$t < (j + 1) \cT_k$}{
        \eIf{$t \bmod q_k = 0$}{
            sample $\theta_i \sim \cP_\theta, \quad \forall i \in \{1, 2, \cdots, B^{\text{reset}}_k\}$ \;
            set $g_t = \tfrac{\sum_{\theta_i} \nabla \bar{\cL}_{\alpha_k} (x_{k, t}, \lambda_k; \theta_i)}{B^{\text{reset}}_k}$ \;
        }{
            sample $\theta_i \sim \cP_\theta, \quad \forall i \in \{1, 2, \cdots, B^{(p)}_k\}$ \;
            set $g_t = \tfrac{\sum_{\theta_i} \nabla \bar{\cL}_{\alpha_k} (x_{k, t}, \lambda_k; \theta_i)}{B^{(p)}_k} - \tfrac{\sum_{\theta_i} \nabla \bar{\cL}_{\alpha_k} (x_{k, t-1}, \lambda_k; \theta_i)}{B^{(p)}_k} + g_{t-1}$ \;
        }
        \eIf{$w_1 \neq \perp$}{
            set $x_{k, t+1} = x_{k, t} - b_j \eta_k w_1$ \tcp*{second-order step}
        }{
            \If{$\|g_t\| \leq 2 \epsilong_k$}{
                \Return $x_{k, t}$ \;
            }
            set $x_{k, t+1} = x_{k, t} - \eta_k (g_t / \|g_t\|)$ \tcp*{first-order step}
        }
        update $t \gets t + 1$ \;
    }
}
\Return $x_{k, t}$
\end{algorithm2e}

\begin{remark}
    We briefly describe the roles of the main parameters in \Cref{alg:spider_sfo_plus}:
    \begin{itemize}
        \item $B^{(p)}_k$: The %
        smaller %
        {sample size} employed {in gradient estimator}.
        \item $B^{\text{reset}}_k$: %
        {The larger sample size employed in computing occasional highly accurate gradient approximation used in gradient estimator}.
        \item $q_k$: The number of iterations before the {highly accurate} gradient {computation is performed. }%
        \item $\eta_k$: The step size for both first-order and second-order steps.
        \item $J_k$: The total number of {times the negative curvature search step is performed.}%
        \item $\cT_k$: The number of %
        first-order or second-order %
        steps %
        {performed} {in between successive negative curvature search steps.}
        \item $\overline{\epsilon}^{(g)}_k$: The primal parameter on which the primal subproblem tolerance $\epsilong$ is dependent on, with $\epsilong_k = \tilde{\cO} ( \overline{\epsilon}^{(g)}_k )$.
    \end{itemize}
\end{remark}

\begin{lemma}[Work complexity of SPIDER] \label{lemm:iter-sample-complexity-spider}
    Suppose $\cX = \RR^d$, and Assumptions \ref{assn:abs-true-bounds}, \ref{assn:reg-c-nabla-c}, \ref{assn:lip-nabla-f-nabla-c}, \ref{assn:lip-nabla2-f-nabla2-c} and \ref{assn:var-bounds} hold. Let \Cref{alg:spider_sfo_plus} be the solver $\cM$ under \textbf{Option II}, with terms $L^{(g)}_{\cL, k}$, $L^{(H)}_{\cL, k}$, $\sigma^{(g)}_{\cL, k}$, and $\Delta_k$ defined as per \Cref{def:complexity_factors}. Suppose for $\overline{\epsilon}^{(g)}_k \leq 1$, $\epsilonH_k \leq 1$, for every iteration $k \in \NN_0$ such that 
    primal subproblem tolerances $\epsilong_k = \overline{\epsilon}^{(g)}_k \log(64 (\cT_k J_k + 1)) \leq 1$ and $\epsilonH_k \leq 1$, {where $\cT_k$, $J_k$ are defined in \Cref{alg:spider_sfo_plus}. Then, }\Cref{alg:spider_sfo_plus} returns an iterate $x_k$ satisfying the probabilistic conditions~\eqref{cond:prob_primal_prob} with probability at least $p_k = 1/2$, with work complexity
    \begin{align*}
        &\cW^{(p)}_{SPDR, k} = \tilde{\cO} \lt( 
        \tfrac{\Delta_k L^{(g)}_{\cL, k} \sigma^{(g)}_{\cL, k}}{(\overline{\epsilon}^{(g)}_k)^3} + \tfrac{\Delta_k L^{(g)}_{\cL, k} L^{(H)}_{\cL, k} \sigma^{(g)}_{\cL, k}}{(\overline{\epsilon}^{(g)}_k)^2 (\epsilonH_k)^2} 
        + \tfrac{\Delta_k (L^{(g)}_{\cL, k})^2 (L^{(H)}_{\cL, k})^2}{(\epsilonH_k)^5} 
        + \tfrac{\Delta_k (L^{(g)}_{\cL, k})^2 L^{(H)}_{\cL, k}}{\overline{\epsilon}^{(g)}_k (\epsilonH_k)^3} + \tfrac{(\sigma^{(g)}_{\cL, k})^2}{(\overline{\epsilon}^{g}_k)^2} 
        + \tfrac{L^{(g)}_{\cL, k} \sigma^{(g)}_{\cL, k} \epsilonH_k}{(\overline{\epsilon}^{g}_k)^2 L^{(H)}_{\cL, k}} \rt).
    \end{align*}

\end{lemma}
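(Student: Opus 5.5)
The plan is to reduce \Cref{lemm:iter-sample-complexity-spider} to the existing analysis of SPIDER--SFO+ in \cite[Theorem 3 and Algorithm 2]{fang_spider_2018}, applied at each outer iteration $k$ to the subproblem objective $h(\cdot):=\cL_{\alpha_k}(\cdot,\lambda_k)$ with $\cX=\RR^d$.

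\textbf{Step 1: verify the hypotheses of the SPIDER analysis for $h$.}
\begin{itemize}
\item The initial optimality gap is $\Delta_k<\infty$, by \eqref{eq:det_delta_bnd} in \Cref{lemm:1-lower-bound-finite}.
\item The stochastic gradient $\nabla\bar{\cL}_{\alpha_k}(\cdot,\lambda_k;\theta)$ is $L^{(g)}_{\cL,k}$-Lipschitz almost surely, by \Cref{lemm:2-lipschitz-grad-aug-lag}. This is the averaged-Lipschitz property SARAH/SPIDER estimators need.
\item The stochastic Hessian is $L^{(H)}_{\cL,k}$-Lipschitz, by \Cref{lemm:3-lipschitz-hess-aug-lag}. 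This is needed by Neon2 (\Cref{def:neon2}).
\item The gradient error is bounded almost surely by $\sigma^{(g)}_{\cL,k}$, by \Cref{lemm:4-finite-bounded-variance}. This implies the bounded-variance condition.
\item Unbiasedness of \eqref{eq:sample_gradient_estimator} holds by independence of $\zetaone,\zetatwo$.
\end{itemize}
With these in hand, the parameter choices in \Cref{alg:spider_sfo_plus} are exactly those of \cite{fang_spider_2018}, with $(L,\rho,\sigma,\Delta,\epsilon,\delta)$ replaced by $(L^{(g)}_{\cL,k},L^{(H)}_{\cL,k},\sigma^{(g)}_{\cL,k},\Delta_k,\overline{\epsilon}^{(g)}_k,\epsilonH_k)$. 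The conditions $\overline{\epsilon}^{(g)}_k\le 1$, $\epsilonH_k\le1$ match their small-tolerance regime.

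\textbf{Step 2: correctness with probability $1/2$.} This follows the SPIDER argument.
\begin{itemize}
\item On a high-probability event, the SARAH estimator satisfies $\|g_t-\nabla h(x_{k,t})\|\le \overline{\epsilon}^{(g)}_k\log(64(\cT_kJ_k+1))$ uniformly over all $\cT_kJ_k+1$ steps. This uses a vector martingale (Azuma/Pinelis) concentration with $B^{\text{reset}}_k,B^{(p)}_k,q_k$ as chosen; the log factor is exactly why $\epsilong_k$ is inflated.
\item Each Neon2 call errs with probability at most $1/(16J_k)$; a union bound over $J_k$ calls gives total failure probability at most $1/16$.
\item On the good event, each epoch that is neither a first-order termination nor a no-negative-curvature epoch decreases $h$ by $\Omega(\min\{(\epsilonH_k)^3/L^{(H)}_{\cL,k},\ \epsilonH_k\overline{\epsilon}^{(g)}_k/L^{(H)}_{\cL,k}\})$. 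The random sign $b_j$ is used to control the first-order term in expectation.
\item Hence, by Markov's inequality applied to the total expected decrease against $\Delta_k$, the choice of $J_k$ forces termination, with probability at least $1/2$ overall, at a point with $\|g_t\|\le 2\epsilong_k$. This gives $\|\nabla h\|\lesssim\epsilong_k$ after rescaling constants.
\item At that point Neon2 has certified $\sigma_{\min}(\nabla^2h)\ge-2\epsilonH_k$, which after relabeling constants gives \eqref{cond:prob_primal_prob} with $T_\cX=\RR^d$.
\end{itemize}

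\textbf{Step 3: count work.} The number of iterations is at most $\cT_kJ_k$, with the following costs.
\begin{itemize}
\item Small-batch cost: $\cT_kJ_kB^{(p)}_k$.
\item Reset cost: $(\cT_kJ_k/q_k+1)B^{\text{reset}}_k$.
\item Neon2 cost: $J_k$ calls, each $\tilde{\cO}((L^{(g)}_{\cL,k})^2/(\epsilonH_k)^2)$ by \cite[Theorem 9]{fang_spider_2018}.
\end{itemize}
Substituting $J_k=\Theta(\Delta_kL^{(H)}_{\cL,k}(\epsilonH_k)^{-3}+\Delta_kL^{(H)}_{\cL,k}(\epsilonH_k\overline{\epsilon}^{(g)}_k)^{-1})$, $\cT_k=\Theta(\epsilonH_kL^{(g)}_{\cL,k}c_{n_0}/(L^{(H)}_{\cL,k}\overline{\epsilon}^{(g)}_k))$, and $B^{(p)}_k,q_k,B^{\text{reset}}_k$, and then expanding the products, yields the six displayed terms:
\begin{itemize}
\item the two $\Delta_kL^{(g)}\sigma^{(g)}$ terms come from the small-batch and reset costs;
\item the $(\epsilonH_k)^{-5}$ and $(\overline{\epsilon}^{(g)}_k)^{-1}(\epsilonH_k)^{-3}$ terms come from Neon2 times $J_k$;
\item the last two terms are the "$+1$" reset and one epoch's worth of work.
\end{itemize}
Note that $c_{n_0}$ cancels in $\cT_kB^{(p)}_k$ and $\cT_k/q_k$.

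The main obstacle is Step 2. The SPIDER high-probability estimator bound must be transplanted to our estimator, which involves the product $\nabla C(x,\zetaone)^\top C(x,\zetatwo)$, and the Lipschitz and variance constants must be tracked as they grow with $\alpha_k$. I would rely on the almost-sure bounds of \Cref{lemm:2-lipschitz-grad-aug-lag,lemm:4-finite-bounded-variance} to make the bounded-martingale-difference hypotheses hold verbatim.
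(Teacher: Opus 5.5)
Your Step~1 is the paper's proof. The paper checks the same hypotheses: finite $\Delta_k$ via \eqref{eq:det_delta_bnd}, the almost-sure Lipschitz stochastic gradient and Hessian from \Cref{lemm:2-lipschitz-grad-aug-lag,lemm:3-lipschitz-hess-aug-lag}, the almost-sure gradient-error bound from \Cref{lemm:4-finite-bounded-variance}, and the tolerance regime. It then reads off both the probability-$1/2$ guarantee and the six-term work bound directly from \cite[Theorem 6]{fang_spider_2018}. Your Steps~2--3 unpack that citation rather than take a different route. That is fine, but as written they contain two errors you should fix if you keep them.

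First, the decrease per negative-curvature epoch is wrong. Such an epoch moves a distance of order $\epsilonH_k/L^{(H)}_{\cL,k}$ along a direction of curvature at most $-\Theta(\epsilonH_k)$. Its expected decrease is therefore $\Omega((\epsilonH_k)^3/(L^{(H)}_{\cL,k})^2)$, not $\Omega((\epsilonH_k)^3/L^{(H)}_{\cL,k})$; the latter does not even have units of function value. The decrease bound in Step~2 should also be stated for every non-terminating epoch, since the $\epsilonH_k\overline{\epsilon}^{(g)}_k/L^{(H)}_{\cL,k}$ term in your minimum comes from first-order epochs. Consequently $J_k$ must be $\Theta(\Delta_k(L^{(H)}_{\cL,k})^2(\epsilonH_k)^{-3}+\Delta_kL^{(H)}_{\cL,k}(\epsilonH_k\overline{\epsilon}^{(g)}_k)^{-1})$. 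You used the unsquared $J_k$ from the listing of \Cref{alg:spider_sfo_plus}, which appears to drop the square. With that choice two things go wrong:
\begin{itemize}
\item With the correct per-epoch decrease, your Markov step no longer forces termination in general.
\item Your products give $\Delta_kL^{(g)}_{\cL,k}\sigma^{(g)}_{\cL,k}/((\overline{\epsilon}^{(g)}_k)^2(\epsilonH_k)^2)$ and $\Delta_k(L^{(g)}_{\cL,k})^2L^{(H)}_{\cL,k}/(\epsilonH_k)^5$. Each is a factor $L^{(H)}_{\cL,k}$ short of the second and third displayed terms.
\end{itemize}
With the square restored, your count reproduces all six terms, including the cancellation of $c_{n_0}$ and the two ``$+1$'' contributions.

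Second, Neon2 certifies $\sigma_{\min}\geq -2\epsilonH_k$ only at the point $x_{k,j\cT_k}$ where the epoch starts, not at the returned iterate $x_{k,t}$. To transfer it, you need the within-epoch displacement bound $\cO(\epsilonH_k/L^{(H)}_{\cL,k})$, which follows from the choice of $\cT_k$ and the step length, together with Hessian Lipschitz continuity. This gives $\sigma_{\min}(\nabla^2\cL_{\alpha_k}(x_{k,t},\lambda_k))\geq -c\,\epsilonH_k$ for an absolute constant $c$; this is where the $3\delta$ in SPIDER's guarantee comes from. You then rescale $\epsilonH_k$ by a constant.
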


The proof of this lemma is provided in \Cref{sec:appendix_lemm:spdr}. We now establish the sample complexity of MESCAL-SPIDER, i.e., the MESCAL framework utilizing SPIDER as the solver $\cM$. 

\begin{theorem}[Sample Complexity of MESCAL-SPIDER] \label{thm:mescal_spider}
    Suppose $\cX = \RR^d$, and Assumptions \ref{assn:abs-true-bounds}, \ref{assn:reg-c-nabla-c}, \ref{assn:lip-nabla-f-nabla-c}, \ref{assn:lip-nabla2-f-nabla2-c} and \ref{assn:var-bounds} hold. Let $\{x_k\}$ be the sequence of iterates generated by \Cref{alg:mescal-overview} with initial iterate $x_{-1} \in \cX$ and initial dual iterate $\lambda_0 \in \RR^m$. Suppose that the dual bound parameters $\gamma_k > 0$ be a bounded sequence satisfying $\sum_{k=0}^{\infty} \gamma_k = C_\gamma < \infty$, and that the dual sample sizes satisfy $|S^{(d)}_k| = \Theta((\epsilong)^{-2})$ for all $k \in \NN_0$. Let \Cref{alg:spider_sfo_plus} be the solver $\cM$ under \textbf{Option II}. Suppose for $\overline{\epsilon}^{(g)}_k = \overline{\epsilon}^{(g)} \leq 1$, $\epsilonH_k \leq 1$, such that primal subproblem tolerance inputs $\epsilong_k = \overline{\epsilon}^{(g)}_k \log(64 (\cT_k J_k + 1))$
    , $\epsilong = \epsilong_K \leq 1$ and $\epsilonH = \epsilonH_k \leq 1$, with number of iterations $k = K = K^*$, where $K^*$ as per \Cref{thm:outer-iteration-complexity} and $\cT_k$, $J_k$ defined in \Cref{alg:spider_sfo_plus}. Then, $\tilde{x}:= x_K$ is an $(\epsilong, \epsilonH)$-accurate $p$-probabilistic second-order stationary point satisfying \eqref{eq:opt_cond_prob} as per \Cref{def:opt_cond_prob} with $p=1/2$, and $\tilde{\lambda} := \lambda_K + \alpha_K c(x_K)$. Furthermore, {if there exists a constant $q>0$ such that $\log(1/\epsilong) \geq \log^q(1/\epsilonH)$, then} the sample complexity of the algorithm, as defined in \Cref{def:sample_complexity}, to obtain $\tilde{x}:= x_K$ is
    \begin{enumerate}
        \item \textbf{(Deterministic Sample Complexity):} 
        $$\cW = \tilde{\cO}( ( \epsilong )^{-6} ( \epsilonH )^{-2} + ( \epsilong )^{-5} ( \epsilonH )^{-5} ).$$

        \item \textbf{(Probabilistic Sample Complexity):}  
        $$\cW = \tilde{\cO}( ( \epsilong )^{-5} ( \epsilonH )^{-2} + ( \epsilong )^{-4} ( \epsilonH )^{-5})$$
        with probability at least $p^K$.
    \end{enumerate}

\end{theorem}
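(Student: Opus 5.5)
The argument mirrors the proofs of \Cref{thm:mescal_sgd} and \Cref{thm:mescal_natasha2}, with two SPIDER-specific complications. First, the achieved first-order tolerance is $\epsilong_k=\overline{\epsilon}^{(g)}_k\log(64(\cT_kJ_k+1))$ rather than a directly chosen constant. Second, the work bound in \Cref{lemm:iter-sample-complexity-spider} is stated in terms of $\overline{\epsilon}^{(g)}_k$.

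\textbf{Stationarity.} By \Cref{lemm:iter-sample-complexity-spider}, each $x_k$ satisfies \eqref{cond:prob_primal_prob} with $p_k=1/2$. The tolerances are $\epsilong_k\le 1$ and $\epsilonH_k\le 1$. Applying \Cref{thm:outer-iteration-complexity}(ii) with $K=K^*$ then shows that $x_K$ is an $(\epsilong,\epsilonH)$-accurate $1/2$-probabilistic second-order stationary point, with $\tilde\lambda=\lambda_K+\alpha_Kc(x_K)$.

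Since $\epsilong_k$ varies with $k$ only through the logarithm of $\cT_kJ_k$, one point needs care. I would read $\epsilong$ as $\epsilong_K$, the largest of the $\epsilong_k$ because $\cT_k$ and $J_k$ are increasing in $\alpha_k$. I would then check that $K^*$ is computed with this $\epsilong$; the theorem fixes $K=K^*$ precisely to allow this.

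\textbf{Converting $\overline{\epsilon}^{(g)}$ to $\epsilong$.} By \eqref{eq:aug-constants-complexity-bounds} and \Cref{lemm:1-lower-bound-finite}, $\cT_k=\cO(\alpha_k\epsilonH/\overline{\epsilon}^{(g)})$ and $J_k=\cO(\Delta_k\alpha_k((\epsilonH)^{-3}+(\epsilonH\overline{\epsilon}^{(g)})^{-1}))$. With $\alpha_k\le\alpha_K=\cO(1/\epsilong)$, this gives $\log(\cT_kJ_k)=\cO(\log(1/\overline{\epsilon}^{(g)})+\log(1/\epsilonH)+\log(1/\epsilong))$. The hypothesis $\log(1/\epsilong)\ge\log^q(1/\epsilonH)$ makes $\log(1/\epsilonH)$ polylogarithmic in $1/\epsilong$. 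Hence $\overline{\epsilon}^{(g)}=\epsilong/\mathrm{polylog}(1/\epsilong)$, so any power $(\overline{\epsilon}^{(g)})^{-a}$ is $\tilde\cO((\epsilong)^{-a})$.

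\textbf{Work count.} The deterministic bound follows the pattern of \eqref{eq:1-K-complexity-sgd-det}. Substitute $L^{(g)}_{\cL,k},L^{(H)}_{\cL,k},\sigma^{(g)}_{\cL,k}=\cO(\alpha_k)$ and $\Delta_k\le\Delta^*_k=\cO(\alpha_k)$ into \Cref{lemm:iter-sample-complexity-spider}. The six terms then scale as
\[
\tfrac{\beta^{3k}}{(\epsilong)^3},\quad \tfrac{\beta^{4k}}{(\epsilong)^2(\epsilonH)^2},\quad \tfrac{\beta^{5k}}{(\epsilonH)^5},\quad \tfrac{\beta^{4k}}{\epsilong(\epsilonH)^3},\quad \tfrac{\beta^{2k}}{(\epsilong)^2},\quad \tfrac{\beta^{k}\epsilonH}{(\epsilong)^2}.
\]
In the sixth term the ratio $L^{(g)}\sigma^{(g)}/L^{(H)}$ is $\cO(\alpha_k)$ once $\alpha_k\ge 1$, which I would justify via the lower bounds $L^{(H)}_{\cL,k}\gtrsim\alpha_k$. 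Each sum is geometric and dominated by its $k=K$ term. Using $\beta^K=\cO(1/\epsilong)$ gives $(\epsilong)^{-6}+(\epsilong)^{-6}(\epsilonH)^{-2}+(\epsilong)^{-5}(\epsilonH)^{-5}+(\epsilong)^{-5}(\epsilonH)^{-3}+\dots$. Because $\epsilong,\epsilonH\le1$, everything is absorbed into $(\epsilong)^{-6}(\epsilonH)^{-2}+(\epsilong)^{-5}(\epsilonH)^{-5}$. The $k=0$ term, as in \eqref{eq:0-complexity-sgd}, and the dual work $\tilde\cO((\epsilong)^{-2})$ from \Cref{lemm:gen-complexity-solver} are lower order.

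For the probabilistic bound, work on the event $D=\cap_{k=1}^K\{\Delta_k\le\Delta^*\}$, which by \eqref{eq:delta-prob-overall-bound_init} has probability at least $(1/2)^K=p^K$. On $D$, every $\Delta_k$-proportional term loses one factor of $\beta^k$. The two terms without $\Delta_k$, namely $(\sigma^{(g)})^2/(\overline{\epsilon}^{(g)})^2$ and the last one, contribute only $(\epsilong)^{-4}$ and $(\epsilong)^{-3}$, so the total is $\tilde\cO((\epsilong)^{-5}(\epsilonH)^{-2}+(\epsilong)^{-4}(\epsilonH)^{-5})$. Combining with the almost-sure $k=0$ bound as in \eqref{eq:complexity-general-prob} finishes the proof.

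The main obstacle is the logarithmic coupling between $\epsilong_k$ and $\overline{\epsilon}^{(g)}_k$ through $\cT_kJ_k$. That is where the condition $\log(1/\epsilong)\ge\log^q(1/\epsilonH)$ enters: it absorbs the $\log(1/\epsilonH)$ factors into $\tilde\cO$ in $\epsilong$.
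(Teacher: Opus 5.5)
Your proposal is correct and follows the paper's proof essentially step for step. Stationarity comes from \Cref{lemm:iter-sample-complexity-spider} and \Cref{thm:outer-iteration-complexity}. The paper then obtains $\tilde{\cO}((\overline{\epsilon}^{(g)})^{-r})=\tilde{\cO}((\epsilong)^{-r})$ by citing \Cref{lemm:epsilong-overline-equal}, whereas you re-derive it inline; there, the self-referential $\log(1/\overline{\epsilon}^{(g)})$ term needs the same absorption step used in that lemma. Finally, both use the geometric summation with $\Delta_k\le\Delta^*_k$ deterministically and $\Delta_k\le\Delta^*$ on the event $D$.

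One small point concerns the last term. The paper only provides upper bounds on $L^{(H)}_{\cL,k}$, so your lower bound $L^{(H)}_{\cL,k}\gtrsim\alpha_k$ is not available in general. It is also unnecessary: any positive constant lower bound (e.g., taking $L^{(H)}_{\cL,k}=L^{(H)}_1+\alpha_kL^{(H)}_2$) makes that term $\cO(\alpha_k^2\epsilonH/(\overline{\epsilon}^{(g)})^2)$. Its sum is $\tilde{\cO}((\epsilong)^{-4}\epsilonH)$, which is still absorbed in both the deterministic and probabilistic bounds.
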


\begin{proof}
    From \Cref{lemm:iter-sample-complexity-spider}, we establish that the iterate $x_k$ satisfies \eqref{cond:prob_primal_prob}. As $\epsilong_k \leq \epsilong_K = \epsilong$ for all $k \leq K$ by definition; it holds from \Cref{thm:outer-iteration-complexity} that the iterate $\tilde{x} := x_K$ is an $(\epsilong, \epsilonH)$-accurate $p$-probabilistic second-order stationary point, according to \Cref{def:opt_cond_prob}, with $p=1/2$, and where $\tilde{\lambda} := \lambda_K + \alpha_K c(x_K)$.

    From \eqref{eq:aug-constants-complexity-bounds} and the fact that that $\beta^K = \cO((\epsilong)^{-1})$, we have,
    \begin{align*}
        \epsilong = \overline{\epsilon}^{(g)} \log(64 (\cT_K J_K + 1)) = \overline{\epsilon}^{(g)} \Theta(\log( \max\{ (\epsilonH)^{-3} (\epsilong)^{-3} (\overline{\epsilon}^{(g)})^{-1}, (\epsilonH)^{-1} (\epsilong)^{-3}  (\overline{\epsilon}^{(g)})^{-2} \}))
    \end{align*}
    Then, from \Cref{lemm:epsilong-overline-equal}, we have for any constant $r > 0$ that 
    \begin{align}
        \tilde{\cO}((\overline{\epsilon}^{(g)})^{-r}) = \tilde{\cO}((\epsilong)^{-r}). \label{eq:spdr-equivalence}
    \end{align}

    Following the same approach as in \eqref{eq:prob-k-zero-primal-work} and \eqref{eq:0-complexity-sgd}, and using \eqref{eq:spdr-equivalence} we have
    \begin{align}
        \cW^{(p)}_{SPDR, 0} &= \tilde{\cO} \lt( (\overline{\epsilon}^{(g)})^{-3} + 
        (\overline{\epsilon}^{(g)})^{-2}(\epsilonH)^{-2} + 
        (\epsilonH)^{-5} +
        (\overline{\epsilon}^{(g)})^{-1} (\epsilonH)^{-3} 
        \rt) \notag \\
        &= \tilde{\cO} \lt( (\epsilong)^{-3} + 
        (\epsilong)^{-2}(\epsilonH)^{-2} + 
        (\epsilonH)^{-5} +
        (\epsilong)^{-1} (\epsilonH)^{-3} 
        \rt). \label{eq:0-complexity-spdr}
    \end{align}

    Case (i): By the same reasoning as in \eqref{eq:1-K-complexity-sgd-det} and using \eqref{eq:spdr-equivalence}, we have
    \begin{align}
        \sum_{k=1}^K \cW^{(p)}_{SPDR, k} = \tilde{\cO} \lt( ( \epsilong )^{-6} ( \epsilonH )^{-2} + ( \epsilong )^{-5} ( \epsilonH )^{-5} \rt). \label{eq:1-K-complexity-spdr}
    \end{align}
    Substituting \eqref{eq:0-complexity-spdr} and \eqref{eq:1-K-complexity-spdr} in \eqref{eq:sample-complexity-bound} completes the proof.
    
    Case (ii): For probabilistic bounds, by following the same approach as that for \eqref{eq:1-K-complexity-sgd-prob} and using \eqref{eq:spdr-equivalence}, we have
    \begin{align}
        \PP\lt( \sum_{k=1}^K \cW^{(p)}_{\cM, k} = \tilde{\cO}((\epsilong)^{-5} (\epsilonH)^{-2} + (\epsilong)^{-4} (\epsilonH)^{-5}) \rt) \geq p^K. \label{eq:1-K-complexity-spdr-prob} 
    \end{align}
    We use \eqref{eq:0-complexity-spdr} and \eqref{eq:1-K-complexity-spdr-prob} to complete the proof as in \eqref{eq:complexity-general-prob}.

\end{proof}
\begin{remark}
{We note that} %
    the sample complexity of MESCAL-SPIDER %
    {matches that of} %
    MESCAL-SG-HV-NC. However, we have an additional mild restriction on the parameters $\epsilong, \epsilonH$ such that $\epsilonH$ cannot decrease exponentially faster than $\epsilong$. 
\end{remark}

%% file: 5-numerical-experiments.tex
\section{Numerical Experiments} \label{sec:num_expts}

In this section, we demonstrate the empirical performance of the proposed MESCAL framework for solving a fairness constrained problem and a Neyman-Pearson classification problem. To solve the primal subproblem in \Cref{step:primal_prob} of MESCAL, we consider the three solver choices discussed in \Cref{sec:active_solvers}: SG-HV-NC \cite{arjevani_secondorder_2020}, Natasha2 \cite{allen-zhu_natasha_2018}, and SPIDER \cite{fang_spider_2018}.

We make a few modifications to the MESCAL framework in \Cref{alg:mescal-overview} to enhance its computational efficiency in practical implementations. The practical variant differs from \Cref{alg:mescal-overview} in a few respects.
\begin{itemize}
    
    \item \textbf{Primal variable update:} To conduct Step \ref{step:primal_prob} of \Cref{alg:mescal-overview}, we terminate the primal solver and set $x_k := x_{k-1, t}$ after $t$ iterations of the primal problem subsolver, if $x_{k-1, t}$ satisfies the conditions 
    in \Cref{alg:mescal-overview}. We also impose a maximum number of inner iterations $t \leq t^*$. If $x_{k-1, t^*}$ does not satisfy the conditions in \Cref{alg:mescal-overview},
    then $x_k := x_{k-1, t^*}$. Finally, we impose a maximum number of outer iterations $K$, such that the outer loop terminates when $k=K$. %

    \item \textbf{Dual variable update:} Instead of the bounded dual update as provided in Line \ref{step:dual-update} of Algorithm \ref{alg:mescal-overview}, we employ the full dual update by setting $\gamma_k = \infty$ for all $k \in \NN_0$. Although the full dual update increases the theoretical computational complexity of expectation-constrained optimization \cite{alacaoglu_complexity_2024}, 
    it leads to superior practical performance.
    
    \item \textbf{Subsampled gradient estimator}: Theoretically, in \Cref{sec:inner_loop} and \Cref{sec:active_solvers}, we utilize a sample average approximation of the augmented Lagrangian gradient estimator in \eqref{eq:sample_gradient_estimator} for mini-batch gradient {estimators}. For %
    {improved} practical performance, {we instead use the modified estimator}%
    \begin{align}
    \label{eq:mod_grad_estimator}
        &\nabla \hat{\cL}_\alpha (x, \lambda{, S^{(p)}}) := \frac{\sum_{\theta_i \in S^{(p)}} \nabla F(x, \xi_i)}{|S^{(p)}|} +  \frac{\sum_{\theta_i \in S^{(p)}} \nabla C(x, \zetaone_i)}{|S^{(p)}|}^\top \lt( \lambda + \alpha \frac{\sum_{\theta_i \in S^{(p)}} C(x, \zetatwo_i)}{|S^{(p)}|} \rt),
    \end{align}
    where $S^{(p)} = \{(\xi_i, \zetaone_i, \zetatwo_i)\}_{i=1}^|S^{(p)}|$, and $\zetaone_{i_1}, \zetatwo_{i_2} \simiid \cP_\zeta$ for all $i_1, i_2$. We note that this new estimator is also unbiased. %
    {Moreover, }the %
    {additional }cross-product terms {reduce the variance relative to subsampling the estimator in \eqref{eq:sample_gradient_estimator}. }
    We prove this variance reduction in %
    \Cref{lemm:grad-estimator-variance}.
    
    \item \textbf{Subsampled Hessian estimator}: Theoretically, we utilize a sample average approximation of the augmented Lagrangian Hessian estimator in \eqref{eq:sample_hessian_estimator} for mini-batch Hessian estimators. For %
    {improved} practical performance, we first consider a modified estimator, similar to %
    \eqref{eq:mod_grad_estimator}, %
    {that benefits from variance reduction:}
    \begin{equation} \label{eq:mod_hess_estimator}
    \begin{aligned}
        \nabla^2 \ddot{\cL}_\alpha (x, \lambda{, S^{(p)}}) &:= \frac{\sum\limits_{\theta_i \in S^{(p)}} \nabla^2 F(x, \xi_i)}{|S^{(p)}|} +  \sum_{j=1}^m \lt( \lambda^{(j)} + \alpha \frac{\sum\limits_{\theta_i \in S^{(p)}} C_j(x, \zetatwo_i)}{|S^{(p)}|} \rt) \frac{\sum\limits_{\theta_i \in S^{(p)}} \nabla^2 C_j(x, \zetaone_i)}{|S^{(p)}|}^\top \\
        & \qquad + \alpha \frac{\sum\limits_{\theta_i \in S^{(p)}} \nabla C(x, \zetaone_i)^\top}{|S^{(p)}|} \frac{\sum\limits_{\theta_i \in S^{(p)}} \nabla C(x, \zetatwo_i)}{|S^{(p)}|}.
    \end{aligned}
    \end{equation}
    As discussed above, the cross-product terms %
    {provide }variance reduction. However, this estimator is not symmetric because of the term involving the product of constraint Jacobians. Since nonsymmetric Hessian estimates can increase computational overhead in floating-point implementations, we symmetrize this Gram matrix term as follows:
    \begin{equation} \label{eq:mod_hess_estimator_v2}
    \begin{aligned}
        & \frac{\sum\limits_{\theta_i \in S^{(p)}} \nabla C(x, \zetaone_i)^\top}{|S^{(p)}|} \frac{\sum\limits_{\theta_i \in S^{(p)}} \nabla C(x, \zetatwo_i)}{|S^{(p)}|} \rightarrow \\
        & \qquad \frac{1}{2} \frac{\sum\limits_{\theta_i \in S^{(p)}} \nabla C(x, \zetaone_i)^\top}{|S^{(p)}|} \frac{\sum\limits_{\theta_i \in S^{(p)}} \nabla C(x, \zetatwo_i)}{|S^{(p)}|} + \frac{1}{2} \frac{\sum\limits_{\theta_i \in S^{(p)}} \nabla C(x, \zetatwo_i)^\top}{|S^{(p)}|} \frac{\sum\limits_{\theta_i \in S^{(p)}} \nabla C(x, \zetaone_i)}{|S^{(p)}|}
    \end{aligned}
    \end{equation}
    This %
    modification does not %
    {introduce} bias. {However, the resulting estimator $\nabla^2 \hat{\cL}_\alpha (x, \lambda{, S^{(p)}})$ obtained by  using \eqref{eq:mod_hess_estimator_v2} in \eqref{eq:mod_hess_estimator}} may have a higher variance compared to estimator $\nabla^2 \ddot{\cL}_\alpha (x, \lambda{, S^{(p)}})$. We use the symmetrized estimator in our practical implementations because it retains the variance-reduction benefit of the cross-product terms while improving computational efficiency through symmetry.%

    \item \textbf{Handling inequality constraints:} Typically, slack variables are handled implicitly in augmented Lagrangian methods using implicit slack formulations, e.g. see \cite{rockafellar_dual_1973}. However, we handle inequality constraints explicitly, by augmenting primal variable $x$ with slack variables $s$, and setting $\cX = \RR^d \times \RR^{\mathrm{dim}(s)}_+$ with non-negativity constraints in the projection set. 
    We record three values at every outer iteration of the primal solver: 1) the first order stationarity error $\|\mathrm{proj}_{T_{\cX}(x, s)}(-\nabla_{(x, s)} \cL_\alpha(x, s, \lambda)) \|$ corresponding to \eqref{eq:eps-kkt}(i), 2) the feasibility error $\|c(x) + s\|$ corresponding to \eqref{eq:eps-kkt}(ii), and 3) the minimum eigenvalue of the Hessian of the augmented Lagrangian function $\sigma_{\min} (\nabla^2_{(x, s)} \cL_{\alpha}(x, s, \lambda))$ which provides a lower bound on \eqref{eq:eps-kkt}(iii). We provide \Cref{rmk:proj-dual-comp-slack} in the Appendix on how the first-order stationarity error in \eqref{eq:eps-kkt}$\mathrm{(i)}$ encodes primal feasibility, dual feasibility and complementary slackness for standard inequality constraints, and why we utilize the minimum eigenvalue of the Hessian of the augmented Lagrangian function instead of utilizing \eqref{eq:eps-kkt}(iii).
\end{itemize}

We implemented the practical variant described above, together with the three primal solvers, in Python on a shared Ubuntu 22.04.5 LTS system with CUDA support.\footnote{{Our code will be made publicly available upon publication of the manuscript.}} %
All computations were performed on a machine with an Intel Xeon Gold 6426Y processor with 64 CPU cores across two NUMA nodes and an NVIDIA L40S GPU with driver version 550.120 and CUDA 12.4. Each run was allocated 4 CPU cores and 16 GB of RAM. The system has 251 GB of total memory and 15 GB of swap space, although actual memory usage varied across runs. Each solver has several hyperparameters, as described in \Cref{sec:active_solvers}. For each problem, we first selected reasonable values for these parameters based on the conditions recommended in the respective references and preliminary empirical results. We then performed a local grid search to further improve the performance of each solver. A complete fine-tuning of all solver hyperparameters is beyond the scope of this paper due to the size of the resulting parameter space and the high computational cost of individual runs.

\subsection{Nonconvex Fairness Constrained Problem} \label{subsec:num_fair}
{
We first consider a machine learning task involving a data-driven fairness constraint, where we fit a classification model with a fairness-based constraint that ensures a target group receives a sufficient rate of positive outcomes. Let $\mathcal{D} = \{(a_i, y_i)\}_{i=1}^{|\mathcal{D}|}$ where $a_i \in \RR^d$ represents the feature vector of a particular realization and $y_i \in \{-1, 1\}$ represents the class label; let $\mathcal{A} = \{a_j\}_{j=1}^{|\mathcal{A}|}$ denote a possibly unlabeled set representing the general population; and let  $\mathcal{A}_{\mathrm{tar}} \subseteq \mathcal{A}$ denote the target group. The goal is to minimize the empirical risk over $\mathcal{D}$ while ensuring the average predicted probability for the target group is at least a fraction $\tau_\mathcal{A}$ of the average predicted probability for the entire population $\mathcal{A}$. This problem can be formulated as the following constrained optimization problem:
\begin{equation}\label{eq:fair}
	\begin{aligned}
	\min_{x \in \RR^d} & \ f(x):= \frac{1}{|\mathcal{D}|} \sum_{i: (a_i, y_i) \in \mathcal{D}} \phi_\rho(\ell (x; a_i, y_i)), \\ 
	\text{s.t.} & \ c(x):= \frac{\tau_{\mathcal{A}}}{|\mathcal{A}|}\sum_{i: a_i \in \mathcal{A}} \sigma(a_i^{\top} x) - \frac{1}{|\mathcal{A}_{\mathrm{tar}}|} \sum_{i: a_i \in \mathcal{A}_{\mathrm{tar}}} \sigma(a_i^{\top} x) \le 0,
	\end{aligned}
\end{equation}
where $x \in \RR^d$ denotes the learnable parameters of the model, $\ell(x; a_i, y_i) := \log(1 + \exp(-y_i a_i^\top x))$ is the standard logistic loss function, and $\phi_\rho(u) := \rho \log \lt(1 + \tfrac{u}{\rho} \rt)$ is a concave truncation function used to truncate the loss, where $\rho$ is a scaling parameter. The function $\sigma(z) := (1 + \exp(-z))^{-1}$ represents the predicted probability of a sample belonging to the positive class, and $\tau_{\mathcal{A}}$ denotes the fairness fraction parameter; see \cite{ma_quadratically_2020} for further details. We use two datasets to evaluate the performance of our algorithm on this problem: 
}

{
\begin{itemize} 
    \item \emph{bank \cite{dua_uci_}:}  $d=81$, $(|\mathcal{D}|, |\mathcal{A}|, |\mathcal{A}_{tar}|)= (22605, 22605, 6320)$, and  $\tau_{\mathcal{A}} = 0.4$. 
    \item \emph{loan \cite{george_lendingclub}:}  $d=250$, $(|\mathcal{D}|, |\mathcal{A}|, |\mathcal{A}_{tar}|)= (63890, 64485, 31966)$, and  $\tau_{\mathcal{A}} = 0.6$.
\end{itemize}
}

To solve \eqref{eq:fair} using MESCAL, we reformulate its inequality constraint to an equality constraint $c(x) + s = 0$, where $s \geq 0$ is enforced %
by projections after %
{each} iterate update. This reformulation has stationarity conditions equivalent to those of the original problem \eqref{eq:fair} as shown in \cite{li_rateimproved_2021}. {In all experiments, we set} the tolerances %
to $\epsilon = \epsilong = \epsilonH = 0.01$. %
The algorithm %
terminates if the first-order stationarity, feasibility, and a lower bound on second-order stationarity conditions %
in \eqref{eq:eps-kkt} for the equality-constrained reformulation of \eqref{eq:fair} are all satisfied to within $\epsilon$. For all subroutines, at the $k$-th iteration of MESCAL, we set $\alpha_k = 1.1^{k+1}$. %
All methods {are initialized at} $x_{{-1}} = \tfrac{20}{\sqrt{d}} [1 1 \dots]^\top$ where $[1 1 \dots]^\top$ represents a vector of ones. 

Figure \ref{fig:fc_3_abs_cons} plots the second-order stationarity error, first-order stationarity error, and feasibility error against the total work. Here, the total work is defined as the total number of stochastic augmented Lagrangian gradient evaluations plus twice the total number of stochastic augmented Lagrangian Hessian vector product evaluations. The factor of two reflects the convention that a Hessian vector product evaluation requires approximately twice the computational effort of a gradient evaluation. As established in \Cref{lemm:gen-complexity-solver}, this measure of work accounts for the total stochastic objective gradient, constraint function, constraint gradient, objective Hessian vector product, and constraint Hessian vector product evaluations.

\begin{figure}[h]
    \centering
    \includegraphics[width=0.9\linewidth]{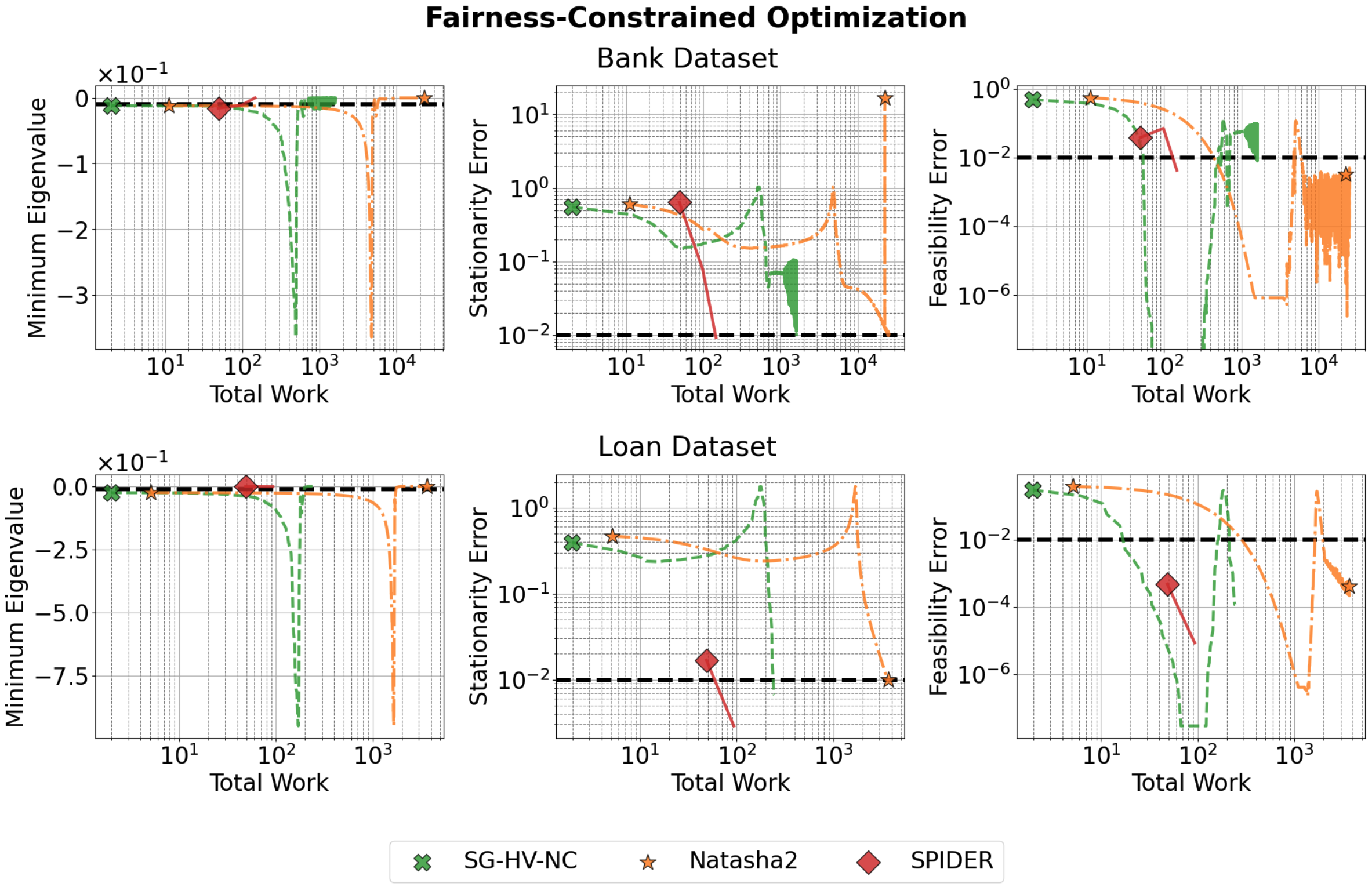}
    \caption{Results of %
    Algorithm \ref{alg:mescal-overview} on the nonconvex fairness-constrained optimization problem utilizing the \emph{bank} \cite{dua_uci_} and the \emph{loan} \cite{george_lendingclub} dataset. From left to right, the plots report the minimum eigenvalue of the Hessian of the augmented Lagrangian, the first-order stationarity error \eqref{eq:eps-kkt}$\mathrm{(i)}$, and the feasibility error \eqref{eq:eps-kkt}$\mathrm{(ii)}$. Results are shown for the three primal subproblem solvers SG-HV-NC, Natasha2, and SPIDER. The $x$-axis represents the total computational work, measured by the number of stochastic augmented Lagrangian gradient and Hessian vector product evaluations. Markers indicate the beginning of each MESCAL iteration for the corresponding solver. All solvers use the same initial point; the initial point is not shown because the $x$-axis is on a logarithmic scale.}
    
    \label{fig:fc_3_abs_cons}
\end{figure}

We observe that SPIDER requires approximately one order of magnitude less work than SG-HV-NC and two orders of magnitude less work than Natasha2 to reach the desired accuracy. Although appropriate tuning of the hyperparameters may slightly change the relative performance of these solvers, SPIDER consistently performed well in our experiments. This behavior can be attributed, at least in part, to its use of SARAH-based gradient updates and its relatively infrequent negative curvature searches. For the problems considered, the iterates $(\{x_k\})$ rapidly move out of regions of negative curvature, reducing the need for repeated negative curvature searches. Consequently, SPIDER avoids frequent Hessian vector product evaluations and achieves lower overall computational work. All methods converge to the prescribed tolerances, although the stationarity and feasibility errors for SG-HV-NC and Natasha2 exhibit some transient increases before converging to the prescribed tolerances.

\subsection{Nonconvex Neyman-Pearson Classification}
We now consider a nonconvex Neyman-Pearson classification problem \cite{rigollet_neymanpearson_2011, yan_adaptive_2022}, where the goal is to minimize the false-negative error rate while ensuring that the false-positive error rate does not exceed a prescribed maximum value. This problem can be formulated as the following constrained optimization problem:
\begin{equation} \label{eq:np-clas}
	\begin{aligned}
	\min_{x \in \RR^d} & \ f(x) := \frac{1}{N^+} \sum_{i=1}^{N^+} \phi(x^{\top} a_i^+) , \\ 
	\text{s.t. } & \ c(x) := \frac{1}{N^-} \sum_{j=1}^{N^-} \phi(-x^{\top} a_j^-) - \tau_{FP} \le 0,
	\end{aligned}
\end{equation}
where $\{a_i^+\}_{i=1}^{N^+}$ and $\{a_{j}^-\}_{j=1}^{N^-}$ denotes the positive-class samples and negative-class samples in the training data set, respectively. The parameter $\tau_{FP}$ specifies the maximum allowable false-positive error rate. In \eqref{eq:np-clas}, we set $\phi(\cdot)$ to be the sigmoid function: $\phi(u) = (1+\exp(u))^{-1}$. Similar to the approach in \Cref{subsec:num_fair}, we reformulate the inequality constraint in \eqref{eq:np-clas}
as the equality constraint $c(x) + s = 0$, with $s \geq 0$, such that \eqref{eq:np-clas} is of the form \eqref{eq:gen_formula}, where $\cX=\{x \in \RR^d, s \in \RR_+\}$, and the random variables $\zeta$ and $\xi$ in \eqref{eq:gen_formula} correspond to uniformly sampling from the $N^+$ positive-class samples and $N^-$ negative-class samples, respectively. We use two datasets to %
evaluate the performance of our algorithm on this problem:

\begin{itemize}
\item \emph{spambase \cite{dua_uci_}:} $d=57$, $(N^+,N^-)=(1813,2788)$, and $\tau_{FP}=0.2$.

\item \emph{madelon \cite{guyon_result_2004}:} $d=500$, $(N^+,N^-)=(1300,1300)$, and $\tau_{FP}=0.4$.

\end{itemize}

Following \cite{yan_adaptive_2022}, we preprocess each dataset by first standardizing each feature to zero mean and unit variance, and subsequently scaling each feature vector to unit Euclidean norm. %
The tolerance was set to $\epsilon = \epsilong = \epsilonH = 0.01$ in all tests on the \emph{spambase} and \emph{madelon} problem settings. The method is set to terminate if the first-order stationarity, feasibility, and a lower bound on second-order stationarity conditions (see \eqref{eq:eps-kkt}) of the equality-constrained reformulation of \eqref{eq:np-clas} are all satisfied to within $\epsilon$. For all subroutines, at the $k$-th outer iteration of MESCAL, we set $\alpha_k = 1.1^{k+1}$. %
All methods are initialized at $x_{{-1}} = \boldsymbol{0}$. 

Figure \ref{fig:np_3_abs_cons} plots the second-order stationarity error, first-order stationarity error, and feasibility error against the total work. As in the fairness-constrained problem, SPIDER requires lower computational work than SG-HV-NC and Natasha2, requiring approximately one order of magnitude less work for the \emph{madelon} dataset and two orders of magnitude less work for the \emph{spambase} dataset. We attribute this difference, as discussed above, to the relatively infrequent negative curvature searches performed by SPIDER.

\begin{figure}[H]
    \centering
    \includegraphics[width=0.9\linewidth]{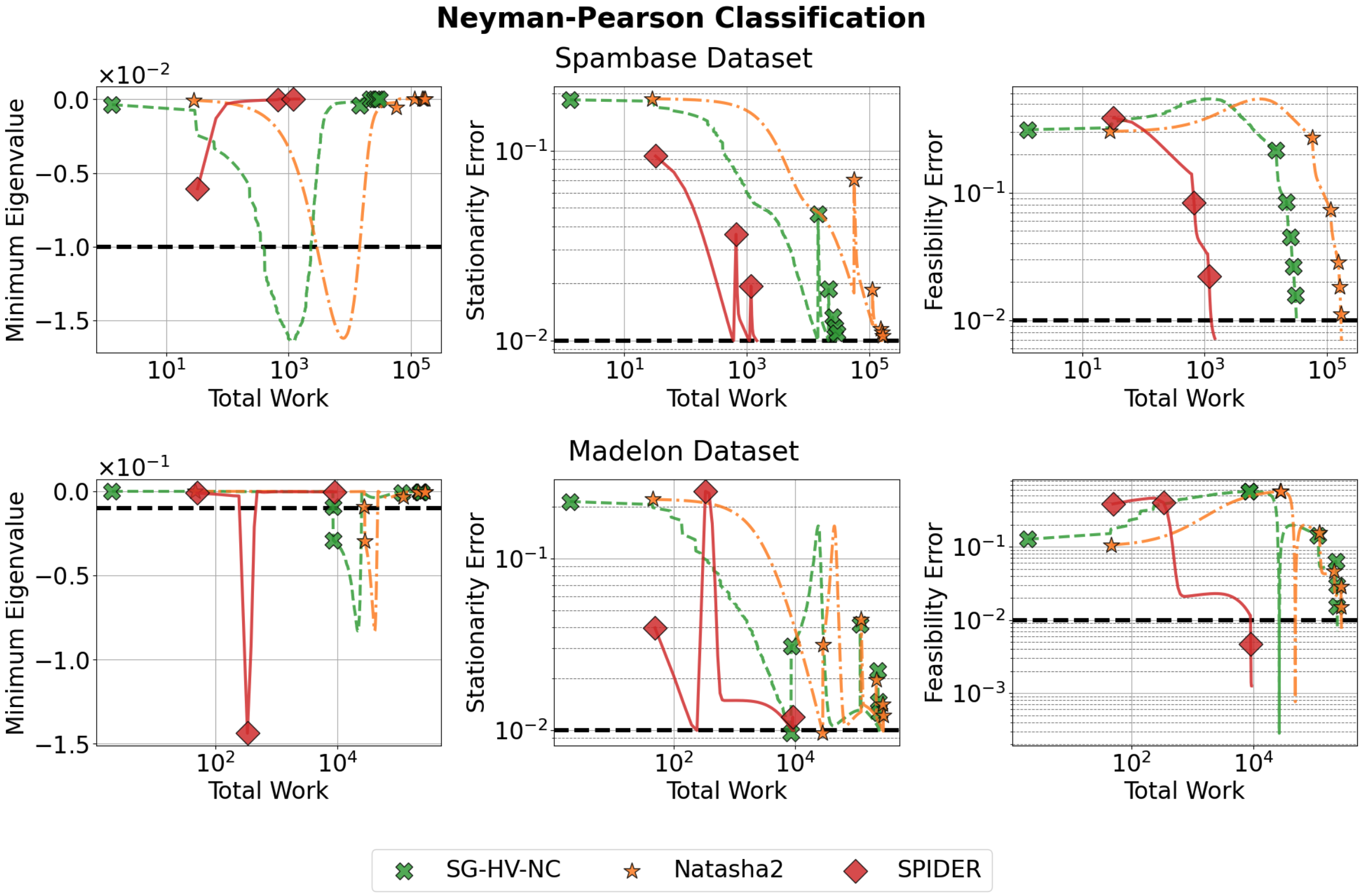}
    \caption{Results of %
    Algorithm \ref{alg:mescal-overview} on the Neyman-Pearson classification problem \eqref{eq:np-clas} utilizing the \emph{spambase} \cite{dua_uci_} and the \emph{madelon} \cite{guyon_result_2004} dataset. From left to right, the plots report the minimum eigenvalue of the Hessian of the augmented Lagrangian, the first-order stationarity error \eqref{eq:eps-kkt}$\mathrm{(i)}$, and the feasibility error \eqref{eq:eps-kkt}$\mathrm{(ii)}$. Results are shown for the three primal subproblem solvers SG-HV-NC, Natasha2, and SPIDER. The $x$-axis represents the total computational work, measured by the number of stochastic augmented Lagrangian gradient and Hessian vector product evaluations. Markers indicate the beginning of each MESCAL iteration for the corresponding solver. All solvers use the same initial point; the initial point is not shown because the $x$-axis is on a logarithmic scale.}
    
    \label{fig:np_3_abs_cons}
\end{figure}

Despite the difference in total work, the error trajectories remain comparable across the three solvers. For \emph{spambase} dataset, the feasibility error consistently decreases, while the second-order stationarity error initially increases before decreasing, and the first-order stationarity error increases after each dual variable update. SG-HV-NC and Natasha2 also require more outer iterations than SPIDER. For \emph{madelon}, all three solvers exhibit closely matching error trajectories. After the first iteration, the iterates enter region with negative curvature, reflected by a large second-order stationarity error, along with large first-order stationarity and feasibility errors. Subsequently, all three errors decrease rapidly before gradually converging to the prescribed tolerances. The nonmonotonic behavior of the errors is not unexpected due to the interplay among the first-order gradient step, second-order negative curvature step, and dual variable update step.

%% file: 6-conclusion.tex
\section{Final Remarks} \label{sec:conclusion}

We developed and analyzed MESCAL, an augmented Lagrangian framework for solving stochastic nonconvex optimization problems with expectation constraints over a closed and convex set. We proposed second-order inexactness conditions in expectation and with prescribed probability for solving the primal subproblems inexactly and established corresponding iteration complexity results. Under reasonable assumptions on the theoretical properties of a general primal solver, we established a sample complexity result of $\cO((\epsilong)^{-5} + (\epsilong)^{-4}(\epsilonH)^{-5})$ for obtaining an $(\epsilong, \epsilonH)$-accurate second-order stationary point satisfying \eqref{eq:opt_cond_exp} or \eqref{eq:opt_cond_prob}. We further incorporated three existing stochastic second-order subproblem solvers into the framework under the restriction $\cX=\RR^d$ and established corresponding deterministic sample complexity results to achieve an $(\epsilong,\epsilonH)$-accurate $p$-probabilistic second-order stationary point, with sample complexity bounds of the form $\tilde{\cO}\lt((\epsilong)^{-5}(\epsilonH)^{-5} + (\epsilong)^{-6}(\epsilonH)^{-2}\rt)$ or $\tilde{\cO}\lt((\epsilong)^{-5}(\epsilonH)^{-5} + (\epsilong)^{-7}(\epsilonH)^{-1}\rt)$ depending on the chosen primal solver. The corresponding probabilistic bounds improve the dependence on $\epsilong$ in the solver-specific terms by a factor of $\cO((\epsilong)^{-1})$. To the best of our knowledge, sample complexity guarantees for obtaining approximate second-order stationary points in nonconvex expectation-constrained optimization have not been established previously. Our numerical experiments on two nonconvex machine learning problems further illustrate the practical performance of MESCAL with the considered subproblem solvers.

Our analysis also highlights several open problems in stochastic nonconvex optimization that are directly relevant to extending the scope of the proposed framework. Based on our literature review, we are unaware of existing methods and/or complexity results for the following settings:
\begin{itemize}
\item Second-order solvers for expectation-constrained optimization problems over general closed and convex constraint sets.
\item Complexity results for obtaining second-order stationary points in expectation for unconstrained or convex-constrained stochastic optimization. In particular, Berahas et al. \cite{berahas_exploiting_2026} provide stationarity guarantees for obtaining a second-order stationary point in expectation for unconstrained optimization, but do not establish corresponding sample complexity results.
\item Algorithms for computing negative curvature directions that lie within a convex constraint set. In particular, we are unaware of a projected variant of Oja's algorithm for identifying a negative curvature direction subject to convex constraints.
\item Single-loop second-order solvers for expectation-constrained optimization problems. Developing such methods is nontrivial, as discussed {in} 
{\cite{alacaoglu_complexity_2024}}.
\end{itemize}
Developing methods for these problems would directly expand the class of subproblem solvers that can be incorporated into the proposed framework and could lead to stronger theoretical guarantees. These directions provide several avenues for future work.

\section*{Acknowledgments}
The authors thank Zichong Li for helpful initial discussions that stimulated this work.

%% file: 7-appendix.tex
\section{Additional Results} \label{sec:appendix}

\begin{lemma} \label{lemm:proj-bound}
    Suppose $\cX$ is a closed and convex set. For any vectors $a$ and $b$, it holds that
    $$\|\mathrm{proj}_\cX(a + b)\| \leq \| \mathrm{proj}_\cX(a) \| + \| b \|.$$
\end{lemma}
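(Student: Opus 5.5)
The plan is to combine the nonexpansiveness of the Euclidean projection onto a closed convex set with the triangle inequality. Concretely, we write
\begin{align*}
\|\mathrm{proj}_\cX(a+b)\| \leq \|\mathrm{proj}_\cX(a)\| + \|\mathrm{proj}_\cX(a+b) - \mathrm{proj}_\cX(a)\|,
\end{align*}
and it then suffices to show that the second term is at most $\|b\|$.

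For that bound, we use the standard variational characterization of the projection. Since $\cX$ is closed, convex and nonempty, $\mathrm{proj}_\cX(z)$ exists and is unique. It is the unique point $p \in \cX$ satisfying
\begin{align*}
(z - p)^\top (y - p) \leq 0 \quad \text{for all } y \in \cX.
\end{align*}
Set $p_1 := \mathrm{proj}_\cX(a+b)$ and $p_2 := \mathrm{proj}_\cX(a)$. Apply the characterization at $z = a+b$ with test point $y = p_2$, and at $z = a$ with test point $y = p_1$. Adding the two resulting inequalities gives
\begin{align*}
\|p_1 - p_2\|^2 \leq b^\top (p_1 - p_2) \leq \|b\|\,\|p_1 - p_2\|,
\end{align*}
where the last step is Cauchy--Schwarz. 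Dividing by $\|p_1 - p_2\|$ yields $\|p_1 - p_2\| \leq \|b\|$; if $p_1 = p_2$ this bound is trivial. Substituting into the triangle inequality above completes the argument.

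There is no real obstacle here; the only care needed is to state the projection characterization correctly. In the body of the paper the lemma is applied with $\cX$ replaced by the tangent cone $T_\cX(x_k)$. That cone is closed and convex (and nonempty, since it contains the origin), so the same argument applies verbatim.
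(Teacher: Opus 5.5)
Your proposal is correct and is the paper's argument: the triangle inequality around $\mathrm{proj}_\cX(a)$, followed by nonexpansiveness of the projection. The only difference is that you derive nonexpansiveness from the variational characterization of the projection, while the paper simply cites it as a standard property.
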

\begin{proof}
    By the triangle inequality, and the non-expansive property of the projection operator $\mathrm{proj}_\cX$, we have
    \begin{align*}
        \|\mathrm{proj}_\cX(a + b)\| &\leq \|\mathrm{proj}_\cX(a + b) - \mathrm{proj}_\cX(a)\| + \|\mathrm{proj}_\cX(a)\| \\
        &\leq \|(a + b) - a\| + \|\mathrm{proj}_\cX(a)\| = \|b\| + \|\mathrm{proj}_\cX(a)\|.
    \end{align*}
\end{proof}

\begin{lemma} \label{lemm:log-prob-term}
    Let $p \in (0,1)$ and $r \in \RR$ be constants. As $\epsilon \to 0^+$,
    \begin{align*}
        \lt(\log \lt( \frac{1}{1 - p^{1 / \log(1 / \epsilon)}} \rt) \rt)^r = \Theta\lt((\log\log(1/\epsilon))^r\rt) =
        \widetilde{\cO}(1).
    \end{align*}
\end{lemma}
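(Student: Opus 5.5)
The plan is to analyze the quantity $q := p^{1/\log(1/\epsilon)}$ directly. Write $q = \exp(-s)$ with $s := \log(1/p)/\log(1/\epsilon)$. Since $p\in(0,1)$ is fixed, $\log(1/p)>0$ is a fixed positive constant. As $\epsilon\to 0^+$ we have $\log(1/\epsilon)\to\infty$, so $s\to 0^+$. Then $1-q = 1-e^{-s}$. Using the elementary bounds $s/2 \le 1-e^{-s}\le s$, valid for $s\in(0,1]$ and hence for all sufficiently small $\epsilon$, we get
\[
\log\tfrac{1}{s} \;\le\; \log\tfrac{1}{1-q} \;\le\; \log\tfrac{1}{s} + \log 2 .
\]

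Next, substitute $\log(1/s) = \log\log(1/\epsilon) - \log\log(1/p)$. Both $\log\log(1/p)$ and $\log 2$ are constants, while $\log\log(1/\epsilon)\to\infty$. Hence
\[
\log\tfrac{1}{1-q} = \log\log(1/\epsilon) + \cO(1),
\]
so the ratio $\log\frac{1}{1-q}\big/\log\log(1/\epsilon)$ tends to $1$. For all sufficiently small $\epsilon$ this ratio lies in $[1/2,2]$, and both quantities are positive. Raising to the power $r$ gives ratios in $[2^{-|r|},2^{|r|}]$. This yields $\Theta((\log\log(1/\epsilon))^r)$ for any real $r$, including negative $r$, because positivity is ensured.

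Finally, $(\log\log(1/\epsilon))^r$ is polylogarithmic in $1/\epsilon$. For $r\le 0$ it is even bounded. Therefore it is $\cO(1\cdot\log^{r'}(1/\epsilon))$ for some $r'\ge 0$, which by the paper's definition of $\tilde{\cO}$ means $\tilde{\cO}(1)$.

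The only mildly delicate step is the two-sided bound on $1-e^{-s}$ together with the positivity needed for negative $r$. The rest is direct substitution.
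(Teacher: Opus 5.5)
Your argument is correct and follows essentially the same route as the paper: set $x=\log(1/\epsilon)$, show $\log\bigl(1/(1-p^{1/x})\bigr)=\log x+\cO(1)$, and then raise to the power $r$. The only difference is that you use the explicit bounds $s/2\le 1-e^{-s}\le s$ in place of the paper's first-order Taylor expansion, which makes the constants explicit and handles negative $r$ cleanly via positivity.
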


\begin{proof}
    Let $x := \log(1/\epsilon)$. As $\epsilon \to 0^+$, we have $x \to \infty$. Since $p^{1/x} = \exp\lt(-\frac{\ln(1/p)}{x}\rt)$, a first-order Taylor expansion gives
    \begin{align*}
        1-p^{1/x} = \frac{\ln(1/p)}{x}+\cO(x^{-2})
        = \frac{\ln(1/p)}{x} \lt(1+\cO(x^{-1})\rt).
    \end{align*}
    Hence,
    \begin{align*}
        \log\lt(\frac{1}{1-p^{1/x}}\rt) 
        = \log x-\log\ln(1/p) -\log\lt(1+\cO(x^{-1})\rt) 
        = \log x+\cO(1).
    \end{align*}
    Writing $U \coloneqq \log x$, we have $U \to \infty$, and thus $\log\lt(\frac{1}{1-p^{1/x}}\rt) = U \lt( 1 + \cO(U^{-1}) \rt)$. As $r$ is constant, $\lt( \log\lt( \frac{1}{1 -p^{1/x}} \rt) \rt)^r = U^r \lt( 1 + \cO(U^{-1}) \rt)^r = U^r \lt( 1 + \cO (U^{-1}) \rt).$
    
    Substituting $U=\log\log(1/\epsilon)$ proves
    \begin{align*}
        \lt(\log \lt( \frac{1}{1-p^{1/\log(1/\epsilon)}} \rt) \rt)^r =
        \Theta\lt((\log\log(1/\epsilon))^r\rt) = \widetilde{\cO}(1).
    \end{align*}
\end{proof}

\begin{lemma} \label{lemm:grad-estimator-variance}
For $x \in \cX$, let $\{ \zetaone_i \}_{i=1}^n$ and $\{ \zetatwo_i \}_{i=1}^n$ be two independent samples, each
consisting of i.i.d. draws from $\cP_{\zeta}$.
Define, for $j \in \{1,2, \cdots, m\}$,
\[
    X^{(j)}_i := \nabla C^{(j)}(x,\zetaone_i) \in \RR^d,
    \qquad
    Y^{(j)}_i := C^{(j)} (x,\zetatwo_i) \in \RR.
\]
Assume $n \geq 2$,
$\EE[\|X^{(j)}_i\|^2] < \infty$,
$\EE[(Y^{(j)}_i)^2] < \infty$.
Then,
\[
    Var \lt(
        \frac{1}{n}\sum_{i=1}^n X^{(j)}_iY^{(j)}_i
    \rt)
    \geq
    Var \lt(
        \lt(\frac{1}{n}\sum_{i=1}^n X^{(j)}_i\rt)
        \lt(\frac{1}{n}\sum_{i=1}^n Y^{(j)}_i\rt)
    \rt) \quad \forall j \in \{1, 2, \cdots, m\}.
\]
\end{lemma}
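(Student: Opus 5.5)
The plan is to compute both variances in closed form, using only the independence between the two samples $\{\zetaone_i\}$ and $\{\zetatwo_i\}$, and then compare. Here $Var$ denotes the trace of the covariance matrix, as fixed in the notation section. Fix $j$ and drop the superscript. Write $\mu := \EE[X_i] \in \RR^d$, $\nu := \EE[Y_i] \in \RR$, $\Sigma_X := \EE\|X_i - \mu\|^2$ and $s_Y := \EE[(Y_i-\nu)^2]$. These are finite by the moment assumptions.

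\emph{Key identity.} For any random vector $U$ and independent scalar $V$ with finite second moments, $\EE[UV] = \EE[U]\EE[V]$ and $\EE\|UV\|^2 = \EE\|U\|^2\,\EE[V^2]$. Hence
\begin{align*}
Var(UV) &= \big(Var(U) + \|\EE U\|^2\big)\big(Var(V) + (\EE V)^2\big) - \|\EE U\|^2 (\EE V)^2 \\
&= Var(U)\,Var(V) + Var(U)\,(\EE V)^2 + \|\EE U\|^2\, Var(V).
\end{align*}
I will apply this identity twice.

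\emph{Left-hand side.} The pairs $(X_i,Y_i)$, $i=1,\dots,n$, are i.i.d. Within each pair, $X_i$ depends only on $\zetaone_i$ and $Y_i$ only on $\zetatwo_i$, so $X_i$ and $Y_i$ are independent. The variance of a mean of $n$ i.i.d. vectors is $1/n$ times the variance of one of them. Applying the key identity to $U = X_1$, $V = Y_1$ gives
\[
Var\Big(\tfrac1n\textstyle\sum_i X_iY_i\Big) = \tfrac1n\big(\Sigma_X s_Y + \Sigma_X\nu^2 + \|\mu\|^2 s_Y\big).
\]

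\emph{Right-hand side.} $\bar X := \tfrac1n\sum_i X_i$ is a function of the $\zetaone$ sample only and $\bar Y := \tfrac1n\sum_i Y_i$ of the $\zetatwo$ sample only, so they are independent. Their means are $\mu$ and $\nu$, and their variances are $\Sigma_X/n$ and $s_Y/n$. The key identity with $U=\bar X$, $V=\bar Y$ gives
\[
Var(\bar X\bar Y) = \tfrac{1}{n^2}\Sigma_X s_Y + \tfrac1n\Sigma_X\nu^2 + \tfrac1n\|\mu\|^2 s_Y.
\]

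\emph{Comparison.} Subtracting, the two cross terms cancel and
\[
Var\Big(\tfrac1n\textstyle\sum_i X_iY_i\Big) - Var(\bar X\bar Y) = \Sigma_X s_Y\Big(\tfrac1n - \tfrac1{n^2}\Big) \ge 0,
\]
which holds for every $n \ge 1$, so in particular under the stated $n\ge 2$; the difference is strictly positive when $n\ge 2$, $\Sigma_X>0$ and $s_Y>0$. Since $j$ was arbitrary, the claim holds for all $j$.

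The only delicate step is justifying the independence of $X_i$ and $Y_i$ within a pair, and of $\bar X$ and $\bar Y$. Both follow directly from the construction that the two samples are drawn independently. Everything else is routine algebra.
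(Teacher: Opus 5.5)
Your proposal is correct and follows the paper's proof essentially line for line: the same product-variance identity for independent $U$ and $V$, applied once to the i.i.d.\ products $X_iY_i$ and once to the independent means $\bar X,\bar Y$, which gives the difference $\frac{n-1}{n^2}\,Var(X_i)\,Var(Y_i)\ge 0$. Your side remark is also right: the non-strict inequality already holds, with equality, at $n=1$, so the hypothesis $n\ge 2$ is not actually needed for it.
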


\begin{proof}
Consider
\[
    \mu^{(j)}_X := \EE[X^{(j)}_i],
    \qquad
    \mu^{(j)}_Y := \EE[Y^{(j)}_i],
    \qquad
    v^{(j)}_X := Var(X^{(j)}_i),
    \qquad
    v^{(j)}_Y := Var(Y^{(j)}_i) \quad \forall i, j.
\]
For any independent random vector $X$ and scalar $Y$
with finite second moments, independence gives
\begin{align}
    Var(XY)
    &= \EE\bigl[\|XY\|^2\bigr]
       - \|\EE[XY]\|^2 \notag \\
    &= \EE[Y^2]\,\EE\bigl[\|X\|^2\bigr]
       - (\EE[Y])^2\|\EE[X]\|^2 \notag \\
    &= Var(X)Var(Y)
       + Var(X)(\EE[Y])^2
       + Var(Y)\|\EE[X]\|^2. \label{eq:var-product}
\end{align}
Since the vectors $X^{(j)}_iY^{(j)}_i$ are i.i.d., it follows from \eqref{eq:var-product} and linearity of variance for independent random variables,
\[
    Var\lt(
        \frac{1}{n}\sum_{i=1}^n X^{(j)}_iY^{(j)}_i
    \rt)
    =
    \frac{v^{(j)}_Xv^{(j)}_Y + v^{(j)}_X(\mu^{(j)}_Y)^2 + v^{(j)}_Y\|\mu^{(j)}_X\|^2}{n}.
\]

Now, let
\[
    \bar{X}^{(j)} := \frac{1}{n}\sum_{i=1}^n X^{(j)}_i,
    \qquad
    \bar{Y}^{(j)} := \frac{1}{n}\sum_{i=1}^n Y^{(j)}_i.
\]
The independence of the two samples implies that
$\bar{X}^{(j)}$ and $\bar{Y}^{(j)}$ are independent. Moreover,
\[
    \EE[\bar{X}^{(j)}] = \mu^{(j)}_X,
    \qquad
    \EE[\bar{Y}^{(j)}] = \mu^{(j)}_Y,
    \qquad
    Var(\bar{X}^{(j)}) = \frac{v^{(j)}_X}{n},
    \qquad
    Var(\bar{Y}^{(j)}) = \frac{v^{(j)}_Y}{n}.
\]
Applying the product-variance identity from \eqref{eq:var-product} yields
\[
    Var(\bar{X}^{(j)}\bar{Y}^{(j)})
    =
    \frac{v^{(j)}_Xv^{(j)}_Y}{n^2}
    + \frac{v^{(j)}_X(\mu^{(j)}_Y)^2}{n}
    + \frac{v^{(j)}_Y\|\mu^{(j)}_X\|^2}{n}.
\]
Consequently,
\[
    Var\lt(
        \frac{1}{n}\sum_{i=1}^n X^{(j)}_iY^{(j)}_i
    \rt)
    -
    Var(\bar{X}^{(j)}\bar{Y}^{(j)})
    =
    \frac{n-1}{n^2}v^{(j)}_Xv^{(j)}_Y
    \geq 0,
\]
where the inequality follows from $n \geq 2$.
\end{proof}

\begin{lemma} \label{lemm:epsilong-overline-equal}
Let $\epsilon_1,\epsilon_2,\epsilon_3\in(0,1)$ and let
$r,r_1,r_2,r_3>0$ be constants. Suppose
\[
\epsilon_1
=
\Theta\!\left(
\epsilon_2
\log\!\left(
\epsilon_1^{-r_1}
\epsilon_2^{-r_2}
\epsilon_3^{-r_3}
\right)
\right).
\]
Furthermore, if there exists a constant $q>0$ such that $\log(1/\epsilon_1) \geq \log^q(1/\epsilon_3)$. Then
\[
\tilde{\cO}(\epsilon_1^{-r})
=
\tilde{\cO}(\epsilon_2^{-r}).
\]
\end{lemma}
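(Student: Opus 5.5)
The plan is to make the implicit relation explicit by naming its logarithmic factor. Write
\[
\ell_i := \log(1/\epsilon_i), \quad i=1,2,3, \qquad L := \log\bigl(\epsilon_1^{-r_1}\epsilon_2^{-r_2}\epsilon_3^{-r_3}\bigr) = r_1\ell_1 + r_2\ell_2 + r_3\ell_3 .
\]
Since each $\epsilon_i\in(0,1)$, all $\ell_i>0$, and $L\to\infty$ in the asymptotic regime where $\epsilon_1$ or $\epsilon_2$ tends to zero. The hypothesis then says there are constants $0<c\le C$ with $c\,\epsilon_2 L \le \epsilon_1 \le C\,\epsilon_2 L$. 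I will prove the two inclusions $\tilde{\cO}(\epsilon_1^{-r})\subseteq\tilde{\cO}(\epsilon_2^{-r})$ and $\tilde{\cO}(\epsilon_2^{-r})\subseteq\tilde{\cO}(\epsilon_1^{-r})$ separately. Along the way I will also show that $\ell_1$ and $\ell_2$ are polylogarithmically comparable, so a polylog factor in one variable is a polylog factor in the other.

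\emph{First inclusion.} From $\epsilon_1\ge c\,\epsilon_2 L$ and $L\ge 1/c$ for small enough $\epsilon$'s, I get $\epsilon_2\le \epsilon_1$. Hence $\epsilon_1^{-r}\le\epsilon_2^{-r}$ and $\ell_1\le\ell_2$. It follows that any quantity of the form $\epsilon_1^{-r}\,\mathrm{polylog}(1/\epsilon_1)$ is bounded by $\epsilon_2^{-r}\,\mathrm{polylog}(1/\epsilon_2)$.

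\emph{Second inclusion.} From $\epsilon_1\le C\epsilon_2 L$ I get $\epsilon_2^{-r}\le C^r\epsilon_1^{-r}L^r$. It remains to show $L=\mathrm{polylog}(1/\epsilon_1)$, and this is the main obstacle because $L$ itself contains $\ell_2$. I handle it in three steps.
\begin{itemize}
\item[(a)] The extra hypothesis gives $\ell_3\le \ell_1^{1/q}$.
\item[(b)] Taking logarithms in $\epsilon_2\ge \epsilon_1/(CL)$ gives $\ell_2\le \ell_1+\log C+\log L$.
\item[(c)] Substituting (a) and (b) into the definition of $L$ yields the self-referential bound
\[
L \le (r_1+r_2)\ell_1 + r_3\ell_1^{1/q} + r_2\log C + r_2\log L .
\]
Since $r_2\log L\le L/2$ once $L$ exceeds a constant, this gives $L\le 2(r_1+r_2)\ell_1+2r_3\ell_1^{1/q}+\cO(1)$, which is polylogarithmic in $1/\epsilon_1$.
\end{itemize}
Therefore $\epsilon_2^{-r}=\cO\bigl(\epsilon_1^{-r}\,\mathrm{polylog}(1/\epsilon_1)\bigr)$. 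Step (b) then also gives $\ell_2=\cO(\ell_1+\log\ell_1)$, so polylogs in $1/\epsilon_2$ are polylogs in $1/\epsilon_1$, and $\tilde{\cO}(\epsilon_2^{-r})\subseteq\tilde{\cO}(\epsilon_1^{-r})$. Combining the two inclusions gives $\tilde{\cO}(\epsilon_1^{-r})=\tilde{\cO}(\epsilon_2^{-r})$.

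The assumption $\log(1/\epsilon_1)\ge\log^q(1/\epsilon_3)$ is exactly what keeps the $\epsilon_3$ contribution to $L$ polylogarithmic in $1/\epsilon_1$. Without it, $\epsilon_3$ could shrink exponentially faster than $\epsilon_1$, and $L$ would no longer be polylogarithmic in $1/\epsilon_1$.
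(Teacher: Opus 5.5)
Your proposal is correct and follows essentially the same route as the paper. Both arguments sandwich $\epsilon_1/\epsilon_2$ between a constant, via $\epsilon_1/\epsilon_2=\Omega(\log(1/\epsilon_1))$, and a polylogarithm of $1/\epsilon_1$, obtained by closing the self-referential logarithmic bound with $\log x\le x/2+\cO(1)$ and $\log(1/\epsilon_3)\le\log^{1/q}(1/\epsilon_1)$; the only cosmetic differences are that you close the recursion in $L$ directly rather than first proving $\log(1/\epsilon_2)=\cO(\log(1/\epsilon_1))$, and that you make explicit the comparability of the two polylogarithmic scales, which the paper leaves to ``absorbing polylogarithmic factors.''
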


\begin{proof}
Set $a=\log(1/\epsilon_1)$, $b=\log(1/\epsilon_2)$,
$d=\log(1/\epsilon_3)$, and $r_q=\max\{1,1/q\}$.
Since $d\le a^{1/q}$, the assumed relation gives
\[
e^{b-a}=\Theta(r_1a+r_2b+r_3d)= \cO(a^{r_q}+b).
\]
Taking logarithms, for sufficiently large $a$,
\[
b\le a+{r_q}\log a+\log(1+b)+ \cO(1).
\]
Using $\log(1+b)\le b/2+ \cO(1)$ yields $b= \cO(a)$.
Consequently,

\begin{equation}
    \frac{\epsilon_1}{\epsilon_2} =\Theta(r_1a+r_2b+r_3d)\le \cO(a^{r_q}). \label{eq:epsilon-upper-bound}
\end{equation}

The assumption \(\epsilon_1/\epsilon_2=\Theta(r_1 a + r_2 b + r_3 d)\) and the fact that for some constant $t'>0$, $t' r_1 a \leq t' (r_1a+r_2b+r_3d)$ means
\begin{align}
    \Omega(a) = \epsilon_1 /\epsilon_2. \label{eq:epsilon-lower-bound}
\end{align}

Combining \eqref{eq:epsilon-upper-bound} and \eqref{eq:epsilon-lower-bound} means there exist constants $t_{\min}, t_{\max} > 0$ such that $t_{\min} a \leq \tfrac{\epsilon_1}{\epsilon_2} \leq t_{\max} a^{r_q}$. As $\epsilon_1 \to 0$, we have $a = \log(1/\epsilon_1) \to \infty$, so eventually \(t_{\min}a \ge 1\). Raising to the power \(r>0\) gives
\begin{align*}
    1 \leq \tfrac{\epsilon_1^r}{\epsilon_2^r} \leq t_{\max}^r a^{{r_q}r}.
\end{align*}

Thus, for sufficiently small $\epsilon_1$, multiplying by $\epsilon_1^{-r}$ and substituting the value of $a$ gives,
\[
\epsilon_1^{-r}\le \epsilon_2^{-r}
=\cO \left(\epsilon_1^{-r}
[\log(1/\epsilon_1)]^{{r_q}r}\right).
\]

Absorbing polylogarithmic factors proves
$\tilde{\cO}(\epsilon_1^{-r})
=\tilde{\cO}(\epsilon_2^{-r})$.
\end{proof}

\subsection{Proof of \Cref{lemm:lip-c}} \label{sec:appendix_lemm:lip-c}
\begin{proof}%
    From \Cref{assn:abs-true-bounds} and \Cref{assn:var-bounds}, we have
    \begin{align*}
        & \|\nabla F(x,\xi)\| \leq \| \nabla f(x) \| + \|\nabla F(x,\xi) - \nabla f(x) \| \leq  \kappa_{\nabla f} + \sigma_{\nabla f}, \quad \cP_\xi\text{-a.s.}
    \end{align*}
    Similarly, we can prove
    \begin{align*}
        & \|C(x, \zeta)\| \leq \|c(x)\| + \|C(x, \zeta) - c(x)\| \leq \kappa_c + \sigma_c, \quad \cP_\xi\text{-a.s.}, \\
        & \|\nabla C(x, \zeta)\| \leq \|\nabla c(x)\| + \|\nabla C(x, \zeta) - \nabla c(x)\| \leq \kappa_{\nabla c} + \sigma_{\nabla c}, \quad \cP_\xi\text{-a.s.}, \text{ and} \\
        & \|\nabla^2 C(x, \zeta)\| \leq \|\nabla^2 c(x)\| + \|\nabla^2 C(x, \zeta) - \nabla^2 c(x)\| \leq \kappa_{\nabla^2 c} + \sigma_{\nabla^2 c}, \quad \cP_\xi\text{-a.s.}
    \end{align*}
    By the mean value theorem, along the line segment between any two points $x, y \in \cX$ yields some $z$ such that $\cP_\zeta$-almost surely,
    \begin{align*}
         |C^{(j)}(x) - C^{(j)}(y)| = |\nabla C^{(j)}(z)^T (x - y)| \le \|\nabla C^{(j)}(z)\| \|x - y\| \le (\kappa_{\nabla c} + \sigma_{\nabla c}) \|x - y\|.
    \end{align*}

\end{proof}

\subsection{Proof of \Cref{lemm:iter-sample-complexity-sgd-ncs}}\label{sec:appendix_lemm:sgd}
\begin{proof}%
    Iterate $x_k$ satisfying probabilistic conditions~\eqref{cond:prob_primal_prob} with probability at least $p_k = 5/8$ is a direct consequence of \cite[Theorem 5]{arjevani_secondorder_2020}. First, we establish the conditions required for 1) the stochastic maps of  $\cL_{\alpha_k}(\cdot, \lambda_k)$ in \eqref{eq:sample_gradient_estimator} and \eqref{eq:sample_hessian_estimator} to belong to \cite[Oracle $(\mathsf{O}_F^2, P_z) \in \overline{\cO}(F, \sigma_1, \bar{\sigma}_2)$ for $F \in \cF_2 (\Delta, L_1, L_2)$]{arjevani_secondorder_2020}, and 2) for convergence of \Cref{alg:sgd-ncs}, as defined in \cite[Theorem 5]{arjevani_secondorder_2020}.

    \begin{enumerate}[label=\textbf{Condition \arabic*:}, leftmargin=*]
        \item From \eqref{eq:opt_gap_general} in  \Cref{lemm:1-lower-bound-finite}, it holds that $\cL_{\alpha_k}(x_k, \lambda_k) - \cL_{\alpha_k}(x_k^*, \lambda_k) = \Delta_k \leq \Delta^*_k < \infty$ for all $k \in \NN_0$, where $x_k^* \in \min_{x \in \cX} \cL_{\alpha_k}(x, \lambda_k)$. This corresponds to the assumption provided in \cite[Section 2 (Page 6)]{arjevani_secondorder_2020}.
        
        \item From \Cref{lemm:2-lipschitz-grad-aug-lag}, it holds that at each iteration $k \in \NN_0$, the function $\nabla \bar{\cL}(x, \lambda_k; \theta)$ is $L^{(g)}_{\cL, k}$-Lipschitz continuous on $\cX$ almost surely, i.e. $\|\nabla \bar{\cL}_{\alpha_k}(x, \lambda_k; \theta) - \nabla \bar{\cL}_{\alpha_k}(y, \lambda_k; \theta)\| \leq L^{(g)}_{\cL, k} \|x - y\|$, for all $k \in \NN_0$, for all $x, y \in \cX, \cP_\theta$-almost surely By Jensen’s inequality, \Cref{lemm:2-lipschitz-grad-aug-lag} directly implies the required condition $\|\nabla \cL_{\alpha_k}(x, \lambda_k) - \nabla \cL_{\alpha_k}(y, \lambda_k)\| \leq L^{(g)}_{\cL, k} \|x - y\|$ for all $k \in \NN_0$. This corresponds to the assumption provided in \cite[Section 2 (Page 6)]{arjevani_secondorder_2020}.

        \item From \Cref{lemm:3-lipschitz-hess-aug-lag}, it holds that at each iteration $k \in \NN_0$, the function $\nabla^2 \bar{\cL}(x, \lambda_k; \theta)$ is $L^{(H)}_{\cL, k}$-Lipschitz continuous on $\cX$ almost surely, i.e. $\|\nabla^2 \bar{\cL}_{\alpha_k}(x, \lambda_k; \theta) - \nabla^2 \bar{\cL}_{\alpha_k}(y, \lambda_k; \theta)\| \leq L^{(H)}_{\cL, k} \|x - y\|$, for all $k \in \NN_0$ for all $x, y \in \cX, \cP_\theta$-almost surely. By Jensen’s inequality, \Cref{lemm:3-lipschitz-hess-aug-lag} directly implies the required condition $\|\nabla^2 \cL_{\alpha_k}(x, \lambda_k) - \nabla^2 \cL_{\alpha_k}(y, \lambda_k)\| \leq L^{(H)}_{\cL, k} \|x - y\|$ for all $k \in \NN_0$. This corresponds to the assumption provided in \cite[Section 2 (Page 6)]{arjevani_secondorder_2020}.
        
        \item From \Cref{lemm:4-finite-bounded-variance}, it holds that at each iteration $k \in \NN_0$, the stochastic gradient estimator $\nabla \bar{\cL}_{\alpha_k}(x, \lambda_k; \theta)$ satisfy uniform error bounds $\cP_\theta$-almost surely, i.e. $\|\nabla \bar{\cL}_{\alpha_k}(x, \lambda_k; \theta) - \nabla \cL_{\alpha_k}(x, \lambda_k)\|^2 \leq (\sigma^{(g)}_{\cL, k})^2$ for all $k \in \NN_0$, for all $x \in \cX, \cP_\theta$-almost surely. By Jensen’s inequality, \Cref{lemm:4-finite-bounded-variance} directly implies the required condition $\EE_\theta [\|\nabla \bar{\cL}_{\alpha_k}(x, \lambda_k; \theta) - \nabla \cL_{\alpha_k}(x, \lambda_k)\|^2] \leq (\sigma^{(g)}_{\cL, k})^2$ for all $k \in \NN_0$. This corresponds to the assumption provided in \cite[Section 2 (Page 6)]{arjevani_secondorder_2020}.
        
        \item From \Cref{lemm:5-finite-hess-variance}, it holds that at each iteration $k \in \NN_0$, the stochastic Hessian estimator $\nabla^2 \bar{\cL}_{\alpha_k}(x, \lambda_k; \theta)$ satisfy uniform error bounds $\cP_\theta$-almost surely, i.e. $\|\nabla^2 \bar{\cL}_{\alpha_k}(x, \lambda_k; \theta) - \nabla^2 \cL_{\alpha_k}(x, \lambda_k)\|^2 \leq (\sigma^{(H)}_{\cL, k})^2$ for all $k \in \NN_0$, for all $x \in \cX, \cP_\theta\text{-a.s.}$. This corresponds to the assumption provided in \cite[Section 4.1 (Page 11)]{arjevani_secondorder_2020}.

        \item Finally, $(\epsilong_k, \epsilonH_k)$ satisfies the constraints on \cite[$(\epsilon, \gamma)$]{arjevani_secondorder_2020} as required in \cite[Section 2 (Page 6)]{arjevani_secondorder_2020}.
    \end{enumerate}

    By \textbf{Conditions 1-6}, \cite[Theorem 5]{arjevani_secondorder_2020} holds. Therefore, $x_k$ satisfies probabilistic conditions~\eqref{cond:prob_primal_prob}. Furthermore, this establishes the work complexity by \cite[Theorem 5]{arjevani_secondorder_2020} as per \Cref{def:primal_work_at_k} to be
    \begin{align*}
        \cW^{(p)}_{SG, k} = \tilde{\cO} \lt( 
        \tfrac{\Delta_k \sigma^{(g)}_{\cL, k} \sigma^{(H)}_{\cL, k}}{(\epsilong_k)^3} + 
        \tfrac{\Delta_k L^{(H)}_{\cL, k} \sigma^{(g)}_{\cL, k} \sigma^{(H)}_{\cL, k}}{(\epsilonH_k)^2(\epsilong_k)^2} + 
        \tfrac{\Delta_k (L^{(H)}_{\cL, k})^2 (\sigma^{(H)}_{\cL, k} + L^{(g)}_{\cL, k})^2}{(\epsilonH_k)^5} + 
        \tfrac{\Delta_k L^{(g)}_{\cL, k}}{(\epsilong_k)^2}
        \rt).
    \end{align*}
    By the argument in \Cref{lemm:gen-complexity-solver}, this is the work complexity of the primal solver at each iteration $k \in \NN_0$.
\end{proof}

\subsection{Proof of \Cref{lemm:iter-sample-complexity-natasha2}}\label{sec:appendix_lemm:nat2}

\begin{proof}
    Iterate $x_k$ satisfying probabilistic conditions~\eqref{cond:prob_primal_prob} with probability at least $p_k = 2/3$ is a direct consequence of \cite[Theorem 2]{allen-zhu_natasha_2018}. First, we establish the conditions required for 1) the stochastic maps of $\cL_{\alpha_k}(\cdot, \lambda_k)$ in \eqref{eq:sample_gradient_estimator} and \eqref{eq:sample_hessian_estimator} to belong to \cite[function class $f_i$]{allen-zhu_natasha_2018}, and 2) for convergence of \Cref{alg:natasha2}, as defined in \cite[Theorem 2]{allen-zhu_natasha_2018}.

    \begin{enumerate}[label=\textbf{Condition \arabic*:}, leftmargin=*]
        \item From \eqref{eq:opt_gap_general} in \Cref{lemm:1-lower-bound-finite}, it holds that $\cL_{\alpha_k}(x_k, \lambda_k) - \cL_{\alpha_k}(x_k^*, \lambda_k) = \Delta_k \leq \Delta^*_k < \infty$ for all $k \in \NN_0$, where $x_k^* \in \min_{x \in \cX} \cL_{\alpha_k}(x, \lambda_k)$. This corresponds to the assumption stated in \cite[Theorem 2]{allen-zhu_natasha_2018}.
        
        \item From \Cref{lemm:2-lipschitz-grad-aug-lag}, it holds that at each iteration $k \in \NN_0$, the function $\nabla \bar{\cL}(x, \lambda_k; \theta)$ is $L^{(g)}_{\cL, k}$-Lipschitz continuous on $\cX$ almost surely, i.e. $\|\nabla \bar{\cL}_{\alpha_k}(x, \lambda_k; \theta) - \nabla \bar{\cL}_{\alpha_k}(y, \lambda_k; \theta)\| \leq L^{(g)}_{\cL, k} \|x - y\|$, for all $k \in \NN_0$, for all $x, y \in \cX, \cP_\theta$-almost surely. This corresponds to \cite[Assumption (A2)]{allen-zhu_natasha_2018}.

        \item From \Cref{lemm:3-lipschitz-hess-aug-lag}, it holds that at each iteration $k \in \NN_0$, the function $\nabla^2 \bar{\cL}(x, \lambda_k; \theta)$ is $L^{(H)}_{\cL, k}$-Lipschitz continuous on $\cX$ almost surely, i.e. $\|\nabla^2 \bar{\cL}_{\alpha_k}(x, \lambda_k; \theta) - \nabla^2 \bar{\cL}_{\alpha_k}(y, \lambda_k; \theta)\| \leq L^{(H)}_{\cL, k} \|x - y\|$, for all $k \in \NN_0$, for all $x, y \in \cX, \cP_\theta$-almost surely. \Cref{lemm:3-lipschitz-hess-aug-lag} directly implies the required condition $\|\nabla^2 \cL_{\alpha_k}(x, \lambda_k) - \nabla^2 \cL_{\alpha_k}(y, \lambda_k)\| \leq L^{(H)}_{\cL, k} \|x - y\|$ for all $k \in \NN_0$. This corresponds to \cite[Assumption (A4)]{allen-zhu_natasha_2018}.
        
        \item From \Cref{lemm:4-finite-bounded-variance}, it holds that at each iteration $k \in \NN_0$, the stochastic gradient estimator $\nabla \bar{\cL}_{\alpha_k}(x, \lambda_k; \theta)$ satisfy uniform error bounds $\cP_\theta$-almost surely, i.e. $\|\nabla \bar{\cL}_{\alpha_k}(x, \lambda_k; \theta) - \nabla \cL_{\alpha_k}(x, \lambda_k)\|^2 \leq (\sigma^{(g)}_{\cL, k})^2$, for all $k \in \NN_0$, for all $x \in \cX, \cP_\theta\text{-a.s.}$. \Cref{lemm:4-finite-bounded-variance} directly implies the required condition $\EE_\theta [\|\nabla \bar{\cL}_{\alpha_k}(x, \lambda_k; \theta) - \nabla \cL_{\alpha_k}(x, \lambda_k)\|^2] \leq (\sigma^{(g)}_{\cL, k})^2$, for all $k \in \NN_0$. This corresponds to \cite[Assumption (A1)]{allen-zhu_natasha_2018}.

        \item Finally, $(\epsilong_k, \epsilonH_k)$ satisfies the constraints on \cite[$(\varepsilon, 3\delta)$]{allen-zhu_natasha_2018} in \cite[Theorem 2]{allen-zhu_natasha_2018}.
    \end{enumerate}

    By \textbf{Conditions 1-5}, \cite[Theorem 2]{allen-zhu_natasha_2018} holds. Therefore, $x_k$ satisfies probabilistic conditions~\eqref{cond:prob_primal_prob}. Furthermore, this establishes the work complexity by \cite[Theorem 2]{allen-zhu_natasha_2018} as per \Cref{def:primal_work_at_k} to be
    \begin{align*}
        \cW^{(p)}_{Nat2, k} = \tilde{\cO} \lt( 
        \tfrac{(\sigma^{(g)}_{\cL, k})^4}{(\epsilong_k)^2}
        + \tfrac{(L^{(H)}_{\cL, k})^2 (L^{(g)}_{\cL, k})^2 \Delta_k}{(\epsilonH_k)^5}
        + \tfrac{L^{(H)}_{\cL, k} \Delta_k (L^{(g)}_{\cL, k})^2}{\epsilong_k (\epsilonH_k)^3}
        + \tfrac{L^{(H)}_{\cL, k} \Delta_k (\sigma^{(g)}_{\cL, k})^2}{(\epsilong_k)^3 \epsilonH_k}
        + \tfrac{(L^{(g)}_{\cL, k})^3 \Delta_k}{\epsilong_k (\epsilonH_k)^2 \sigma^{(g)}_{\cL, k}}
        \rt).
    \end{align*}
    By the argument in \Cref{lemm:gen-complexity-solver}, this is the work complexity of the primal solver at each iteration $k \in \NN_0$.
\end{proof}

\subsection{Proof of \Cref{lemm:iter-sample-complexity-spider}} \label{sec:appendix_lemm:spdr}

\begin{proof}
    Iterate $x_k$ satisfying probabilistic conditions~\eqref{cond:prob_primal_prob} with probability at least $p_k = 1/2$ is a direct consequence of \cite[Theorem 6]{fang_spider_2018}. First, we establish the conditions required for 1) the stochastic maps of $\cL_{\alpha_k}(\cdot, \lambda_k)$ in \eqref{eq:sample_gradient_estimator} and \eqref{eq:sample_hessian_estimator} to belong to \cite[function class $f_i$]{fang_spider_2018}, and 2) for convergence of \Cref{alg:spider_sfo_plus}, as defined in \cite[Theorem 6]{fang_spider_2018}.

    \begin{enumerate}[label=\textbf{Condition \arabic*:}, leftmargin=*]
        \item From \eqref{eq:opt_gap_general} in \Cref{lemm:1-lower-bound-finite}, it holds that $\cL_{\alpha_k}(x_k, \lambda_k) - \cL_{\alpha_k}(x_k^*, \lambda_k) = \Delta_k \leq \Delta^*_k < \infty$ for all $k \in \NN_0$, where $x_k^* \in \min_{x \in \cX} \cL_{\alpha_k}(x, \lambda_k)$. This corresponds to \cite[Assumption 1(i)]{fang_spider_2018}.
        
        \item From \Cref{lemm:2-lipschitz-grad-aug-lag}, it holds that at each iteration $k \in \NN_0$, the function $\nabla \bar{\cL}(x, \lambda_k; \theta)$ is $L^{(g)}_{\cL, k}$-Lipschitz continuous on $\cX$ almost surely, i.e. $\|\nabla \bar{\cL}_{\alpha_k}(x, \lambda_k; \theta) - \nabla \bar{\cL}_{\alpha_k}(y, \lambda_k; \theta)\| \leq L^{(g)}_{\cL, k} \|x - y\|$, for all $k \in \NN_0$, for all $x, y \in \cX, \cP_\theta$-almost surely. This corresponds to \cite[Assumption 2(ii')]{fang_spider_2018}.

        \item From \Cref{lemm:3-lipschitz-hess-aug-lag}, it holds that at each iteration $k \in \NN_0$, the function $\nabla^2 \bar{\cL}(x, \lambda_k; \theta)$ is $L^{(H)}_{\cL, k}$-Lipschitz continuous on $\cX$ almost surely, i.e. $\|\nabla^2 \bar{\cL}_{\alpha_k}(x, \lambda_k; \theta) - \nabla^2 \bar{\cL}_{\alpha_k}(y, \lambda_k; \theta)\| \leq L^{(H)}_{\cL, k} \|x - y\|$, for all $k \in \NN_0$, for all $x, y \in \cX, \cP_\theta$-almost surely. This corresponds to \cite[Assumption 3]{fang_spider_2018}.
        
        \item From \Cref{lemm:4-finite-bounded-variance}, it holds that at each iteration $k \in \NN_0$, the stochastic gradient estimator $\nabla \bar{\cL}_{\alpha_k}(x, \lambda_k; \theta)$ satisfy uniform error bounds $\cP_\theta$-almost surely, i.e. $\|\nabla \bar{\cL}_{\alpha_k}(x, \lambda_k; \theta) - \nabla \cL_{\alpha_k}(x, \lambda_k)\|^2 \leq (\sigma^{(g)}_{\cL, k})^2$, for all $k \in \NN_0$, for all $x \in \cX, \cP_\theta$-almost surely. This corresponds to \cite[Assumption 2(iii')]{fang_spider_2018}.

        \item Finally, primal tolerance parameter $\overline{\epsilon}^{(g)}_k$ satisfies the condition on \cite[$10 \epsilon$]{fang_spider_2018} and subproblem tolerance $\epsilonH_k$ satisfies the constraints on \cite[$3\delta$]{fang_spider_2018} in \cite[Theorem 6]{fang_spider_2018}.
    \end{enumerate}

    By \textbf{Conditions 1-5}, \cite[Theorem 6]{fang_spider_2018} holds. Therefore, $x_k$ satisfies probabilistic conditions~\eqref{cond:prob_primal_prob} Furthermore, this establishes the work complexity by \cite[Theorem 6]{fang_spider_2018} as per \Cref{def:primal_work_at_k} to be
    \begin{align*}
        \cW^{(p)}_{SPDR, k} &= \tilde{\cO} \lt( 
        \tfrac{\Delta_k L^{(g)}_{\cL, k} \sigma^{(g)}_{\cL, k}}{(\overline{\epsilon}^{(g)}_k)^3} 
        + \tfrac{\Delta_k L^{(g)}_{\cL, k} L^{(H)}_{\cL, k} \sigma^{(g)}_{\cL, k}}{(\overline{\epsilon}^{(g)}_k)^2 (\epsilonH_k)^2} 
        + \tfrac{\Delta_k (L^{(g)}_{\cL, k})^2 (L^{(H)}_{\cL, k})^2}{(\epsilonH_k)^5} 
        + \tfrac{\Delta_k (L^{(g)}_{\cL, k})^2 L^{(H)}_{\cL, k}}{\overline{\epsilon}^{(g)}_k (\epsilonH_k)^3} 
        + \tfrac{(\sigma^{(g)}_{\cL, k})^2}{(\overline{\epsilon}^{(g)}_k)^2} 
        + \tfrac{L^{(g)}_{\cL, k} \sigma^{(g)}_{\cL, k} \epsilonH_k}{(\overline{\epsilon}^{(g)}_k)^2 L^{(H)}_{\cL, k}} \rt) \\
    \end{align*}
    
    By \Cref{lemm:gen-complexity-solver}, this is the work complexity of the primal solver at each iteration $k \in \NN_0$.
\end{proof}

\subsection{Additional Remarks}

\begin{remark}  \label{rmk:proj-tan-cone}
    We summarize several key properties of the projection operator onto tangent cones that may be useful in convergence and complexity analysis. Throughout this discussion, let $\cX \subset \RR^d$ be a nonempty, closed, and convex set. The projection operator $\mathrm{proj}_{T_\cX(x)}(\cdot)$ satisfies the following properties:
    \begin{itemize}
        \item \textbf{Closedness and Convexity:} For any $x \in \cX$, the tangent cone $T_\cX(x)$ is a closed convex cone. Consequently, $\mathrm{proj}_{T_\cX(x)}$ inherits all standard properties of projection onto closed convex sets, including non-expansiveness.

        \item \textbf{Moreau Decomposition and Duality:} The tangent cone $T_\cX(x)$ and the normal cone $N_\cX(x)$ are polar to each other, i.e., $N_\cX(x) = (T_\cX(x))^\circ$. By Moreau's Decomposition Theorem \cite{moreau_decomposition_1962, rockafellar_variational_1998}, any vector $y \in \RR^d$ satisfies $y = \mathrm{proj}_{T_\cX(x)}(y) + \mathrm{proj}_{N_\cX(x)}(y)$ with $\langle \mathrm{proj}_{T_\cX(x)}(y), \mathrm{proj}_{N_\cX(x)}(y) \rangle = 0$. This yields the dual distance identity: $\|\mathrm{proj}_{T_\cX(x)}(y)\| = \mathrm{dist}(y, N_\cX(x))$, for all $y \in \RR^d$.

        \item \textbf{Norm Bound:} Since $0 \in T_\cX(x)$ for all $x \in \cX$, applying \Cref{lemm:proj-bound} directly yields a uniform norm bound: $\|\mathrm{proj}_{T_\cX(x)}(v)\| \le \|v\|$, for all $v \in \RR^d$.

        \item \textbf{Positive Homogeneity:} As a consequence of $T_\cX(x)$ being a cone, the projection operator is positively homogeneous of degree one: $\mathrm{proj}_{T_\cX(x)}(\lambda v) = \lambda \mathrm{proj}_{T_\cX(x)}(v), \quad \forall \lambda > 0, \, v \in \RR^d$.
    \end{itemize}
\end{remark}

\begin{remark} \label{rmk:proj-dual-comp-slack}
    We make the following remarks regarding the analysis of inequality constraints:
    \begin{itemize}
        \item As stated earlier, to convert an inequality {constraint} into an equality constraint, a non-negative slack variable is added to the constraint with the non-negativity constraints embedded into the projection set $\cX$. We note that the first-order stationarity error term \eqref{eq:eps-kkt}$\mathrm{(i)}$, encodes dual feasibility and complementary slackness, %
        in addition to the first-order stationarity error.
    
        Consider $x' := (x, s) \in \cX := \RR^d \times \RR_+^m$. The constraint function is updated to $c(x) + s = 0$. Now, the standard first-order stationarity conditions for the problem
        \begin{align*}
            \min_{(x, s)\in \cX}   &\quad f(x) \\
            \mathrm{s.t.}   &\quad c(x) + s = 0, \\
                            &\quad s \geq 0,
        \end{align*}
        are
        \begin{align*}
            \nabla f(x) + \nabla c(x)^\top \lambda &= 0, \\
            c(x) + s &= 0, \\
            \lambda &\geq 0, \\
            \lambda \cdot s &= 0.
        \end{align*}
        Under the projection settings, the optimality conditions are 
        \begin{align*}
            \|\mathrm{proj}_{T_{\cX}(x')}(- \nabla f(x) - \nabla c(x)^\top\lambda \| &= 0, \\
            c(x) + s &= 0.
        \end{align*}
        The first condition corresponds to
        \begin{align*}
            \|\mathrm{proj}_{T_{\cX}((x, s))}(- \nabla f(x) - \nabla c(x)^\top\lambda \| := \sqrt{\| \nabla f(x) + \nabla c(x)^\top \lambda \|^2 + \sum_{j=1}^m \|\mathrm{proj}_{T_{\RR_+}(s^{(j)})}(- \lambda^{(j)}) \|^2}.
        \end{align*}
        The first term within the square root corresponds to the first-order stationarity term. The second term within the square root can be simplified as
        \begin{align*}
            \mathrm{proj}_{T_{\RR_+}(s^{(j)})}(- \lambda^{(j)}) = 
            \lt\{ 
            \begin{matrix} 
            -\lambda^{(j)}, & s^{(j)} > 0 \\ 
            \max (- \lambda^{(j)}, 0), & s^{(j)} = 0 
            \end{matrix} \rt\}.
        \end{align*}
        Setting the norm of this term to zero exactly corresponds to the dual feasibility and complementary slackness condition.

        \item We found no algorithms for checking the minimum eigenvalue corresponding to a critical cone of vectors as required. Thus, instead of ensuring the minimum eigenvalue over the critical cone is greater than $\epsilonH$, we ensure $\sigma_{\min}(\nabla^2_{(x,s)(x, s)} \cL_\alpha((x, s), \lambda)) \geq -\epsilonH$. This allows us to use Oja's and Neon2 algorithm to search for negative curvature directions. It is not necessary to ensure this, as the negative curvature direction could correspond to a direction that is not within the critical cone. The iterate could have already converged to an $(\epsilong, \epsilonH)$-accurate point earlier, but this is a sufficient condition for second-order stationarity. Our empirical observations, within the considered problems, found $\sigma_{\min}(\nabla^2_{(x, s)(x, s)} \cL_\alpha) \approx \sigma_{\min}(\nabla^2_{xx} \cL_\alpha)$. We will consider developing algorithms to search for  eigenvectors corresponding to a closed and convex set $\cX \subset \RR^n$ in the future.
    \end{itemize}
\end{remark}